\documentclass[11pt]{article}
\date{}

\usepackage[title]{appendix}
\usepackage{xcolor}
\usepackage[margin=1in]{geometry}                
\usepackage{graphicx}
\usepackage{subcaption}
\usepackage{pdflscape}
\usepackage{amssymb}
\usepackage[normalem]{ulem}
\usepackage{hyperref}
\usepackage{enumitem}
\usepackage{mathrsfs}
\usepackage{epstopdf}
\usepackage{rotating}
\usepackage{longtable} 
\usepackage{adjustbox}
\usepackage{float}

\usepackage{color}
\usepackage{bbm, dsfont}
\usepackage{pst-node}
\usepackage{tikz-cd}
\usepackage{amsfonts} 
\usepackage{geometry}
\usepackage{amsthm}
\usepackage{amsmath}
\usepackage{titlesec}
\usepackage{amssymb}
\usepackage{enumitem}
\usepackage{float}
\usepackage [english]{babel}
\usepackage [autostyle, english = american]{csquotes}
\usepackage{algorithm}
\usepackage[noend]{algpseudocode} 
\makeatletter
\def\BState{\State\hskip-\ALG@thistlm}
\makeatother
\usepackage{hyperref}
\usepackage[normalem]{ulem}
\usepackage{mathrsfs}
\usepackage[italicdiff]{physics}

\newlist{casess}{enumerate}{1}
\setlist[casess]{label=     \textbf{Case} \arabic*:}
\usepackage{mathtools}

\makeatletter
\newcommand*{\rom}[1]{\expandafter\@slowromancap\romannumeral #1@}
\makeatother

\usepackage{etoolbox}

\makeatletter
\patchcmd{\ttlh@hang}{\parindent\z@}{\parindent\z@\leavevmode}{}{}
\patchcmd{\ttlh@hang}{\noindent}{}{}{}
\makeatother

\usepackage{listings}
\usepackage{color} 
\definecolor{mygreen}{RGB}{28,172,0} 
\definecolor{mylilas}{RGB}{170,55,241}

\newlist{Assumptions}{enumerate}{1}
\setlist[Assumptions]{label=     \textbf{Assumption} \arabic*:}

\makeatletter

\newsavebox{\@brx}
\newcommand{\llangle}[1][]{\savebox{\@brx}{\(\m@th{#1\langle}\)}%
  \mathopen{\copy\@brx\kern-0.5\wd\@brx\usebox{\@brx}}}
\newcommand{\rrangle}[1][]{\savebox{\@brx}{\(\m@th{#1\rangle}\)}%
  \mathclose{\copy\@brx\kern-0.5\wd\@brx\usebox{\@brx}}}
\makeatother

\usepackage{lipsum} 
\usepackage{titlesec}
\titleformat{\subsection}[runin]
       {\normalfont\bfseries}
       {\thesubsection}
       {0.5em}
       {}
       [.]

 \newtheorem{thm}{Theorem}[section]
 \newtheorem{cor}[thm]{Corollary}
 
 \newtheorem{lem}[thm]{Lemma}
 \newtheorem{prop}[thm]{Proposition}
  \newtheorem{exl}[thm]{Example}
 \theoremstyle{definition}
 \newtheorem{defn}[thm]{Definition}
 \theoremstyle{remark}

 \numberwithin{equation}{section}

\numberwithin{equation}{section}

\newcommand{\C}{\mathbb{C}}

\newcommand{\Hi}{\mathcal{H}}
\newcommand{\Ki}{\mathcal{K}}
\newcommand{\acts}{\curvearrowright}
\newcommand{\N}{\mathbb{N}}
\newcommand{\id}{\textup{id}}

\newcommand{\Ind}{\textup{Ind}}

\def\N{\mathbb{N}}
\def\T{\mathbb{T}}
\def\Z{\mathbb{Z}}

\def\R{\mathbb{R}}
\def\Z{\mathbb Z}

\newcommand{\Aut}{{\rm Aut}}

\DeclarePairedDelimiterX{\inp}[2]{\langle}{\rangle}{#1, #2}

\makeatletter
\newcommand*\bigcdot{\mathpalette\bigcdot@{.5}}
\newcommand*\bigcdot@[2]{\mathbin{\vcenter{\hbox{\scalebox{#2}{$\m@th#1\bullet$}}}}}
\makeatother

\def\CC{\mathbb C}

\def\<{\langle}
\def\>{\rangle}

\numberwithin{equation}{section}

\usepackage[backend=biber,maxnames=10]{biblatex}
\begin{document}

\title{Simplicity and pure infiniteness for $\textup{C}^*$-algebras associated with stabilizers of boundary actions}

\author{Felipe Flores \& Joseph Gondek
\footnote{
\textbf{2020 Mathematics Subject Classification:} Primary 22D25, Secondary 46L05, 37B05.
\newline
\textbf{Key Words:} Simplicity, purely infinite, quasi-regular representation, group $\textup{C}^*$-algebra, selfless $\textup{C}^*$-algebra. 
\newline
For the purpose of open access, the authors have applied a CC-BY license to any author accepted manuscript arising from this submission.}
}

\maketitle

\begin{abstract}\setlength{\parindent}{0pt}\setlength{\parskip}{1ex}\noindent
Given a discrete group $\Gamma$ acting on a compact space $X$, and a stabilizer subgroup $\Lambda\leq \Gamma$ of $X$, we use the Rieffel induction of covariant  $(\Lambda, X)$-representations to study classes of representations induced by certain characters on $\Lambda$ and the $\textup{C}^*$-algebras they generate. When $X$ is a $\Gamma$-boundary, we obtain new classes of simple, traceless group $\textup{C}^*$-algebras. When the $\Gamma$-boundary $X$ is an extreme boundary, we show that the associated group $\textup{C}^*$-algebras are also purely infinite, answering part of a question posed by Kalantar and Scarparo on $\textup{C}^*$-algebras associated with Thompson's groups. The latter $\textup{C}^*$-algebras are shown to be selfless in the sense of Robert.

\end{abstract}

\section{Introduction}

The past decade of research in operator algebras and analytic group theory has revealed several dynamical properties of a discrete group $\Gamma$ that are reflected in its associated group $\textup{C}^*$-algebras. In \cite{KK14}, Kalantar and Kennedy characterize the simplicity of the reduced $\textup{C}^*$-algebra of $\Gamma$ by the existence of a topologically free $\Gamma$-boundary; more recently, Ozawa found that the existence of a topologically free extreme $\Gamma$-boundary is a sufficient condition for the selflessness (in the sense of Robert \cite{Ro25}) of the reduced $\textup{C}^*$-algebra of $\Gamma$ with respect to its canonical tracial state \cite{Oz25}. Consequently, several desirable properties, such as strict comparison and stable rank one, have been established for large classes of reduced group $\textup{C}^*$-algebras \cite{AGKEP25,BaFl26,,FKOCP26,Oz25,RTV25,Vi26}, including the reduced $\textup{C}^*$-algebra of the free group $\mathbb F_n$.

In this article, we consider selflessness, simplicity, and traces for $\textup{C}^*$-algebras generated by other unitary representations of $\Gamma$. Specifically, we consider representations of $\Gamma$ induced from characters defined on the stabilizer of a point in a $\Gamma$-boundary. In the case where the boundaries are topologically free, these stabilizers become trivial, and we recover some of the results in \cite{BKKO17} and \cite{Oz25}. Our results follow the refinements of Kawabe \cite{Ka17} and Kalantar-Scarparo \cite{KS2022} on the $\textup{C}^*$-simplicity problem, which show, for example, that the $\textup{C}^*$-algebra generated by the quasi-regular representation of a stabilizer from a $\Gamma$-boundary contains a unique maximal ideal.

In the first half of this paper, we study the ideal and trace structure of these $\textup{C}^*$-algebras for \textit{relatively central} characters $\chi$ (see Definition \ref{rc}). The main feature of these representations is that their associated $\textup{C}^*$-algebras admit unique $\Gamma$-equivariant ucp maps into $C(\partial_F\Gamma)$, where $\partial_F\Gamma$ is the \textit{Furstenberg boundary} of $\Gamma$.

\begin{thm}[see Corollary \ref{uniqueness}]\label{unique}
Let $X$ be a $\Gamma$-boundary, $x\in X$, and $\Lambda\leq\Gamma$ a subgroup with $\Gamma_x^\circ\leq \Lambda\leq\Gamma_x$. If $\chi: \Lambda\to \C$ is a relatively central character, then there is a unique $\Gamma$-equivariant ucp map $\varphi:\textup{C}^*_{\Ind_{\Lambda}^\Gamma\chi}\Gamma\to C(\partial_F\Gamma)$. 
\end{thm}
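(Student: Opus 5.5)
Existence of such a map comes for free: $C(\partial_F\Gamma)$ is $\Gamma$-injective in the category of $\Gamma$-operator systems (Hamana). So any $\Gamma$-equivariant ucp map from $\C$ extends to a $\Gamma$-equivariant ucp map $\textup{C}^*_{\Ind_{\Lambda}^\Gamma\chi}\Gamma\to C(\partial_F\Gamma)$, since $\Gamma$ acts on the algebra by conjugation with $\lambda(g)=\Ind_\Lambda^\Gamma\chi(g)$. The real content is uniqueness. I will follow the strategy of Kalantar--Scarparo and Kawabe: an equivariant ucp map $\varphi$ is determined by its values $\varphi(\lambda(g))$, $g\in\Gamma$. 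I will then show that these values are forced to be $\varphi(\lambda(g))=\chi$-weighted indicator functions of the fixed-point sets, namely
\[
\varphi(\lambda(g))=\tilde\chi(g)\,\mathbf 1_{\textup{Fix}^\circ(g)}
\]
for an appropriate extension $\tilde\chi$ encoded by relative centrality.

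\textbf{Step 1: localization.} Compose with point evaluations $y\in\partial_F\Gamma$ to get states $\varphi_y$. Since $X$ is a $\Gamma$-boundary, there is a unique equivariant continuous map $\beta:\partial_F\Gamma\to X$. For $g\in\Gamma$ and $y\in\partial_F\Gamma$, I split into two cases.

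\emph{Case 1: $g\notin\Gamma_y$.} The standard multiplicative-domain / Cauchy--Schwarz argument (as in Breuillard--Kalantar--Kennedy--Ozawa) gives $\varphi(\lambda(g))(y)=0$. Concretely, choose $f\in C(\partial_F\Gamma)$ with $f(y)=1$ and $f\,(g f)=0$. Extending $\varphi$ to the crossed product $C(\partial_F\Gamma)\rtimes_r\Gamma$ (or an appropriate covariant pair), $C(\partial_F\Gamma)$ then lies in the multiplicative domain. This yields $\varphi(\lambda(g))(y)=f(y)\varphi(\lambda(g))(y)\overline{(g f)(y)}=0$.

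\emph{Case 2: $g$ fixes a neighbourhood of $y$.} Use that $\Gamma_y^\circ\le\Gamma_{\beta(y)}^\circ$ and that stabilizers in $\partial_F\Gamma$ are amenable. Then need the structure of $\Ind_\Lambda^\Gamma\chi$: the representation decomposes over cosets $\Gamma/\Lambda$. For $x'=\beta(y)=hx$, the condition $\Gamma_x^\circ\le\Lambda\le\Gamma_x$ means $g$ acts on the basis vector $\delta_{h\Lambda}$ by the scalar $\chi(h^{-1}gh)$. The multiplicative-domain argument, now applied with the diagonal algebra $\ell^\infty(\Gamma/\Lambda)$, shows $\varphi_y(\lambda(g))=\chi(h^{-1}gh)$. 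Relative centrality, Definition~\ref{rc}, is exactly what makes this value independent of choices. That definition was cut off, but I expect it says $\chi$ is invariant under conjugation by $\Gamma_x$ and is appropriately trivial/central relative to $\Gamma_x^\circ$.

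\textbf{Step 2: the boundary case.} The remaining points are those $y$ in the boundary of $\textup{Fix}(g)$. Here $\varphi(\lambda(g))$ is continuous and $\partial_F\Gamma$ is extremally disconnected. Fixed-point sets there are clopen (Frolík), so $\textup{Fix}(g)=\textup{Fix}^\circ(g)$ and no such ambiguous points remain.

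I expect the main obstacle to be Case 2: showing $\varphi_y(\lambda(g))$ equals the specific character value. The difficulty is pairing the states $\varphi_y$ with the coset structure and ensuring the value does not depend on the point $hx$ or on the choice of $h$. Relative centrality of $\chi$ is where it must be used. My first attempt will be to show $\lambda(g)$ restricted to the relevant spectral subspace is scalar modulo elements killed by $\varphi_y$, using a net in $\textup{C}^*_{\Ind\chi}\Gamma$ approximating a projection onto the coset $h\Lambda$ via boundary contraction.
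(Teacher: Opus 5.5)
Your existence argument via $\Gamma$-injectivity is fine. The uniqueness argument, however, has a genuine gap at Step~1, Case~1, and the error is built into your target formula.

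\textbf{Case 1 is false in this setting.} You localize on $\partial_F\Gamma$ and claim $\varphi(\Ind_{\Lambda}^\Gamma\chi(g))(y)=0$ whenever $gy\neq y$ in $\partial_F\Gamma$. Your justification is to extend $\varphi$ to $C(\partial_F\Gamma)\rtimes_r\Gamma$ ``or an appropriate covariant pair''. That argument works for $\lambda_\Gamma$, because it needs the unitaries $\Ind_\Lambda^\Gamma\chi(g)$ to act covariantly on a copy of $C(\partial_F\Gamma)$. Here the conclusion itself fails:
\begin{itemize}
\item Thompson's group $T$ is $\textup{C}^*$-simple \cite{LBMB18}, so it acts freely on $\partial_F T$.
\item Take $X=\R/\Z$, $x\in X_0$, $\chi$ trivial. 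Your Case~1 would force every boundary map on $\textup{C}^*_{\lambda_{T/T_x}}T$ to be $\lambda_{T/T_x}(g)\mapsto\delta_{g,e}\mathbf 1$.
\item That map is a trace, contradicting Theorem~\ref{traces}, since $T\curvearrowright\R/\Z$ is faithful but not topologically free.
\end{itemize}

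\textbf{The target formula and Step 2 concern the wrong set.} The correct values are $\tilde\chi(g)\mathbf 1_{\Delta_g}$ with $\Delta_g=\overline{\mathfrak b_X^{-1}(\textup{Fix}(g)^\circ)}$, where $\textup{Fix}(g)$ is taken in $X$. This set is unrelated to $\textup{Fix}_{\partial_F\Gamma}(g)$: for $T$ the latter is empty, while $\Delta_g\neq\emptyset$ whenever $g$ fixes an arc. Consequently Frol\'ik's theorem is beside the point. What is actually needed is that $\mathfrak b_X^{-1}(\textup{Fix}(g)^\circ)\cup\mathfrak b_X^{-1}(X\setminus\textup{Fix}(g))$ be dense in $\partial_F\Gamma$. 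This follows from minimality: finitely many translates of a nonempty open $W\subset\partial_F\Gamma$ cover $\partial_F\Gamma$, so $\mathfrak b_X(W)$ cannot lie in the closed nowhere dense set $\textup{Fix}(g)\setminus\textup{Fix}(g)^\circ$.

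\textbf{Case 2 also fails as written.}
\begin{itemize}
\item $\mathfrak b_X(y)$ need not lie in the orbit $\Gamma x$.
\item The inclusion $\Gamma_y^\circ\le\Gamma^\circ_{\mathfrak b_X(y)}$ is not automatic, since $\mathfrak b_X$ need not be open.
\item No rigidity theorem makes a $\Gamma$-ucp map on $\ell^\infty(\Gamma/\Lambda)$ multiplicative, so the diagonal algebra $\ell^\infty(\Gamma/\Lambda)$ cannot be placed in the multiplicative domain.
\end{itemize}
Your instinct to use diagonal operators and the multiplicative domain is right, but the diagonal algebra must be a copy of $C(X)$, where Theorem~\ref{rigidity} applies.

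\textbf{The missing idea, which is the paper's route.}
\begin{itemize}
\item Take the induced covariant pair $(\Ind_\Lambda^\Gamma\chi,\Ind_\Lambda^\Gamma\rho)$ of $(\Gamma,X)$ with $\rho=\textup{ev}_x$. Then $f\in C(X)$ acts on the $t\Lambda$-coordinate by $f(tx)$.
\item Extend $\varphi$ by $\Gamma$-injectivity to $\mathcal A=\textup{C}^*(\Ind_\Lambda^\Gamma\chi(\Gamma)\cup\Ind_\Lambda^\Gamma\rho(C(X)))$. Rigidity forces $\varphi(\Ind_\Lambda^\Gamma\rho(f))=f\circ\mathfrak b_X$, so these diagonal operators lie in the multiplicative domain.
\item Proposition~\ref{qg1} gives $\Ind_\Lambda^\Gamma\chi(g)\Ind_\Lambda^\Gamma\rho(f)=\tilde\chi(g)\Ind_\Lambda^\Gamma\rho(f)$ whenever $\textup{supp} f\subset\textup{Fix}(g)^\circ$. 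Indeed, if $f(tx)\neq 0$ then $t^{-1}gt\in\Gamma_x^\circ\leq\Lambda$, so $g$ acts on that coordinate by $\chi(t^{-1}gt)$; the paper's class-function extension $\tilde\chi$ is what makes this scalar independent of $t$.
\item Hence $\varphi(\Ind_\Lambda^\Gamma\chi(g))=\tilde\chi(g)$ on $\mathfrak b_X^{-1}(\textup{Fix}(g)^\circ)$, without ever locating $\mathfrak b_X(y)$ relative to $\Gamma x$.
\item The remaining points are handled by \cite[Proposition 3.2]{KS2022}. Compressing by $\Ind_\Lambda^\Gamma\rho(f)$ with $f\cdot(g\cdot f)=0$ gives vanishing on $\mathfrak b_X^{-1}(X\setminus\textup{Fix}(g))$, and the density statement above finishes.
\end{itemize}
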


These maps, called \textit{boundary maps}, have previously been effective in the study of the reduced $\textup{C}^*$-algebra of $\Gamma$, as well as $\textup{C}^*$-algebras associated with \textit{germinal} representations of $\Gamma$ (see \cite{KS2022, Ke20}). Using Theorem \ref{unique}, we recover several analogues of the main results from \cite{KS2022}. For example, the next result is the analogue of \cite[Proposition 4.8]{KS2022}.

\begin{cor}[see Theorem \ref{traces}]
Suppose $X$ is a faithful $\Gamma$-boundary. For a subgroup $\Lambda\leq\Gamma$ with $\Gamma_x^\circ\leq \Lambda\leq\Gamma_x$ and a relatively central character $\chi: \Lambda\to \C$, the \textup{C}$^*$-algebra $\textup{C}^*_{\Ind_{\Lambda}^\Gamma\chi}\Gamma$ admits a trace if and only if $X$ is topologically free. Furthermore, if $X$ is topologically free, then $\lambda_\Gamma\prec \Ind_{\Lambda}^\Gamma\chi$, and the canonical trace is the unique trace on $\textup{C}^*_{\Ind_{\Lambda}^\Gamma\chi}\Gamma$.
\end{cor}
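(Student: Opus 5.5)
We would follow the strategy of \cite[Proposition 4.8]{KS2022}, with Theorem \ref{unique} replacing the uniqueness of boundary maps on germinal $\textup{C}^*$-algebras. Write $\pi=\Ind_{\Lambda}^\Gamma\chi$ and $A=\textup{C}^*_\pi\Gamma$.

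\textbf{Step 1: a trace forces topological freeness.} Suppose $\tau$ is a trace on $A$. Compose $\tau$ with the inclusion $\C\to C(\partial_F\Gamma)$ as constants. Since $\tau(\pi(g)a\pi(g)^*)=\tau(a)$, the result $\tau(\cdot)1$ is a $\Gamma$-equivariant ucp map $A\to C(\partial_F\Gamma)$, so Theorem \ref{unique} gives $\varphi=\tau(\cdot)1$. Next we compute $\varphi$ on group elements. By the construction behind Theorem \ref{unique}, $\varphi(\pi(g))$ should be supported on the set of points of $\partial_F\Gamma$ fixed by $g$, or more precisely in the interior of that fixed-point set. This is the standard fact that boundary maps vanish off the interior of fixed points. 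With $\Lambda\le\Gamma_x$, such $g$ lie in point stabilizers.

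Now take $g\neq e$ acting trivially on a nonempty open set $U\subseteq X$. Pull $U$ back along the factor map $\partial_F\Gamma\to X$. On the resulting open set, $g\in\Gamma^\circ_y$. Since $\Gamma_x^\circ\le\Lambda$ and $\chi$ is relatively central, $\varphi(\pi(g))$ should be evaluated there as $\chi(g)$-type values, and these are nonzero. Off the closure of the fixed-point set of $g$, however, $\varphi(\pi(g))$ is zero. So $\tau(\pi(g))1$ is neither identically zero nor constant. Minimality together with faithfulness of $X$ makes the fixed-point set proper, which gives the contradiction. Hence $X$ is topologically free.

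\textbf{Step 2: topological freeness gives weak containment and the trace.} If $X$ is topologically free, then $\Gamma_x^\circ$ is trivial for a comeager set of points. We would use that every $\Gamma$-equivariant ucp map restricts on group elements to $\varphi(\pi(g))=\delta_{g,e}$. This holds because the fixed-point sets in $\partial_F\Gamma$ of nontrivial elements have empty interior, as the Furstenberg boundary inherits topological freeness through the factor onto $X$ (cf.\ \cite{BKKO17}). Evaluating at a point $z\in\partial_F\Gamma$, the state $\varphi_z$ on $A$ then equals the canonical trace $\delta_e$. Its GNS representation is $\lambda_\Gamma$, so $\lambda_\Gamma\prec\pi$ and the canonical trace is defined on $A$. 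For uniqueness, any trace $\tau$ gives the equivariant map $\tau(\cdot)1$ as in Step 1, and uniqueness in Theorem \ref{unique} forces $\tau=\delta_e\circ$ on $A$.

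\textbf{Main obstacle.} The hard part is the computation of $\varphi(\pi(g))$ on the interior of fixed-point sets. This requires the explicit form of the boundary map obtained in the paper's proof of Theorem \ref{unique}, presumably built from the Rieffel-induced covariant $(\Lambda,X)$-representation. It also requires that relative centrality keeps those values nonzero. I would first try to extract this from the construction, checking it on the dense span of the $\pi(g)$.
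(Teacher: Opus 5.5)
\textbf{There is a genuine gap.} Your outline is the paper's proof of Theorem~\ref{traces} specialized to $N=\{e\}$, so that $\chi_0=\delta_e$ and $\pi_\chi=\lambda_\Gamma$. However, the step that carries the content is left open: the value of the boundary map on group unitaries,
$\varphi(\Ind_\Lambda^\Gamma\chi(g))=\tilde\chi(g)\mathbf{1}_{\Delta_g}$ with $\Delta_g=\overline{\mathfrak b_X^{-1}(\textup{Fix}(g)^\circ)}$ (Corollary~\ref{uniqueness}). The justifications you sketch for it are false.

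\begin{itemize}
\item The fact that boundary maps are supported on fixed-point sets in $\partial_F\Gamma$ holds for $\textup{C}^*_r\Gamma$, not for $\textup{C}^*_\pi\Gamma$ with general $\pi$. For the trivial representation the boundary map is $\varphi(\pi(g))=1$, which is not supported on $\textup{Fix}_{\partial_F\Gamma}(g)$ unless $g$ acts trivially on $\partial_F\Gamma$.
\item The claim ``$g\in\Gamma_y^\circ$ for $y\in\mathfrak b_X^{-1}(U)$'' fails in general. A $\textup{C}^*$-simple group acts freely on $\partial_F\Gamma$ \cite{KK14,BKKO17}, yet it may admit non-topologically-free boundaries (e.g.\ Thompson's $V$ on the Cantor set \cite{LBMB18}). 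There, for $g\neq e$ fixing an open set, the correct value $\tilde\chi(g)\mathbf{1}_{\Delta_g}$ is nonzero although $\textup{Fix}_{\partial_F\Gamma}(g)=\emptyset$.
\end{itemize}

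So the two heuristics in your Step~1 contradict each other. In Step~2, the inference from freeness of $\partial_F\Gamma$ to $\varphi(\pi(g))=\delta_{g,e}$ cannot be right as stated, since it never uses $\pi$; it would apply verbatim to the trivial representation.

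The missing idea is that the relevant set is seen through $X$, via the induced covariant representation.
\begin{itemize}
\item Let $\rho$ be evaluation at $x$. By injectivity of $C(\partial_F\Gamma)$, extend $\varphi$ to $\mathcal A=\textup{C}^*(\Ind_\Lambda^\Gamma\chi(\Gamma),\Ind_\Lambda^\Gamma\rho(C(X)))$.
\item By Theorem~\ref{rigidity}, the extension restricts on $\Ind_\Lambda^\Gamma\rho(C(X))$ to the $*$-homomorphism $f\mapsto f\circ\mathfrak b_X$. So this copy of $C(X)$ lies in the multiplicative domain.
\item Proposition~\ref{qg1} gives $\Ind_\Lambda^\Gamma\chi(g)\Ind_\Lambda^\Gamma\rho(f)=\tilde\chi(g)\Ind_\Lambda^\Gamma\rho(f)$ whenever $\textup{supp}(f)\subset\textup{Fix}(g)^\circ$. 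Choosing such $f$ with $f(\mathfrak b_X(y))=1$ yields $\varphi(\pi(g))(y)=\tilde\chi(g)$ for $y\in\mathfrak b_X^{-1}(\textup{Fix}(g)^\circ)$.
\item \cite[Proposition 3.2]{KS2022} gives vanishing off $\Delta_g$.
\item If $\textup{Fix}(g)^\circ\neq\emptyset$, minimality gives $t$ with $tx\in\textup{Fix}(g)^\circ$. Then $t^{-1}gt\in\Gamma_x^\circ\le\Lambda$, so $|\tilde\chi(g)|=1$.
\end{itemize}

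With this formula both of your steps go through:
\begin{itemize}
\item \emph{Step 1.} A trace makes $\tilde\chi(g)\mathbf{1}_{\Delta_g}$ constant, which forces $\Delta_g\in\{\emptyset,\partial_F\Gamma\}$. If $\Delta_g=\partial_F\Gamma$ then $\textup{Fix}(g)$ is dense, so $g=e$ by faithfulness.
\item \emph{Step 2.} Topological freeness gives $\Delta_g=\emptyset$ for $g\neq e$, i.e.\ $\varphi=\delta_e$ on $\Gamma$.
\end{itemize}
The rest of your argument is then correct and agrees with the paper: the GNS representation of $\delta_e$ is $\lambda_\Gamma$, and uniqueness of the trace follows from uniqueness of the boundary map.
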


Following the notation from \cite{LBMB18}, we let $X_0\subset X$ denote the locus of continuity points for the stabilizer map $\textup{Stab}: X\ni x\mapsto \Gamma_x\in \text{Sub}(\Gamma)$, where the codomain is equipped with the Chabauty topology. The next result is the analogue of \cite[Theorem 5.2]{KS2022}.

\begin{thm}[see Proposition \ref{simple}]
The \textup{C}$^*$-algebra $\textup{C}^*_{\Ind_{\Gamma_x^\circ}^\Gamma\chi}\Gamma$ is simple for every $x\in X_0$ and every relatively central character $\chi: \Gamma_x^\circ\to \C$.
\end{thm}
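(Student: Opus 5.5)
The plan is to follow the strategy of \cite[Theorem 5.2]{KS2022}, using the boundary map of Theorem \ref{unique} in place of the one available there for germinal representations. Write $\pi=\Ind_{\Gamma_x^\circ}^\Gamma\chi$ and $A=\textup{C}^*_\pi\Gamma$. Let $J\subseteq A$ be a proper closed two-sided ideal. We will show that $J=0$ by proving that the quotient map $A\to A/J$ is injective, i.e. that $\pi$ is weakly contained in the representation $\sigma$ of $\Gamma$ obtained from $A\to A/J\subseteq B(K)$.

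\textbf{Step 1: move the problem to the Furstenberg boundary.} By $\Gamma$-injectivity of $C(\partial_F\Gamma)$, the $\Gamma$-equivariant ucp map $A/J\to B(K)\to C(\partial_F\Gamma)$ exists. Precomposing with the quotient map gives a $\Gamma$-equivariant ucp map $A\to C(\partial_F\Gamma)$ that vanishes on $J$. By the uniqueness in Theorem \ref{unique}, this map equals the canonical boundary map $\varphi$. Hence $J\subseteq\ker\varphi$. Moreover $\ker\varphi$ contains no nonzero ideal, since a nonzero ideal would have to contain some $\pi(f)$ with $f$ positive and $\varphi(\pi(f))\neq 0$.

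\textbf{Step 2: compute $\varphi$ on group elements.} As in \cite{KS2022}, we expect $\varphi(\pi(g))$ to be supported on the set of points $y\in\partial_F\Gamma$ where $g$ acts trivially near the image of $y$ in $X$ (via the factor map $\partial_F\Gamma\to X$). There, $\varphi(\pi(g))(y)$ should be $\chi(g)$ when $g$ lies in the appropriate conjugate of $\Gamma_x^\circ$, and $0$ otherwise. Relative centrality of $\chi$ is exactly what makes this formula consistent and $\Gamma$-equivariant; this is essentially the construction behind Theorem \ref{unique}.

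\textbf{Step 3: weak containment (the main obstacle).} We must show that $\pi\prec\sigma$, and this is where $x\in X_0$ enters. Continuity of $\textup{Stab}$ at $x$ gives $\Gamma_x=\Gamma_x^\circ$, so that $\Lambda=\Gamma_x$. It also gives that every $g\notin\Gamma_x$ fails to fix points near $x$. Pick $y\in\partial_F\Gamma$ lying over $x$. Then the state $\varphi(\cdot)(y)$ restricted to $\pi(\Gamma)$ equals $\chi$ on $\Gamma_x$ and vanishes off $\Gamma_x$. This is precisely the diagonal matrix coefficient of $\pi$ at the cyclic vector $\delta_{\Gamma_x}$, so the state is the vector state that generates $\pi$. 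Since this state factors through $A/J$ by Step 1, it extends to a state of $A/J$. Its GNS representation is then $\pi$, and it factors through the quotient, giving $\pi\prec\sigma$. Consequently $J=0$.

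The delicate point is to justify the claim about the support of $\varphi(\pi(g))$ exactly at points over $x$: we need $\varphi(\pi(g))(y)=0$ for $g\notin\Gamma_x$. The first attempt will be to use that $g$ fixes no neighbourhood of $x$, combined with the fact that $\varphi(\pi(g))$ vanishes wherever $g$ does not act trivially on a neighbourhood in $X$ (a standard consequence of multiplicative domain arguments on $C(X)\subseteq C(\partial_F\Gamma)$). If $g$ fixes $x$ but no neighbourhood of it, then $g\notin\Gamma_x^\circ=\Gamma_x$, a contradiction. So this case cannot occur, and it remains only to handle $g$ moving $x$, which is immediate.
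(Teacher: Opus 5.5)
This is essentially the paper's proof of Proposition \ref{simple}: the quotient by $J$ yields a boundary map that must be the unique one, $g\mapsto\tilde{\chi}(g)\textbf{1}_{\Delta_g}$, and evaluating it at a point $y\in\mathfrak{b}_X^{-1}(x)$, where $x\in X_0$ forces $\{g: y\in\Delta_g\}=\Gamma_x^\circ=\Gamma_x$, returns the vector state at $\delta_{\Gamma_x}$, so $\pi\prec\sigma$ and $J=0$. Two side remarks need correcting, though neither is used in your main line. First, delete the claim in Step 1 that $\ker\varphi$ contains no nonzero ideal: the multiplication operators $\rho(f)$, $f\in C(X)$, are not in $\textup{C}^*_\pi\Gamma$, and given the rest of Step 1 that claim is just the simplicity you are proving. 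Second, $\varphi(\pi(g))$ vanishes off the closed set $\Delta_g$, not wherever $g$ fails to fix a neighbourhood; this still suffices at $y$ because $\Delta_g\subseteq\mathfrak{b}_X^{-1}(\textup{Fix}(g))$.
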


A construction central to all the results in this article, which appears to be new, is a model for the \textit{Rieffel induction} of a covariant $(\Lambda, X)$-representation, where $X$ is a compact $\Gamma$-space and $\Lambda\leq\Gamma$. Recall that there is a canonical conditional expectation $\mathbb{E}_H: C(X)\rtimes \Gamma\to C(X)\rtimes \Lambda$. Integrating a covariant $(\Lambda, X)$-representation on a Hilbert space $\mathbb{\Hi}$ to a $*$-representation $\Phi:C(X)\rtimes \Lambda\to \mathbb{B}(\Hi)$, Rieffel induction with respect to $\mathbb{E}_H$ produces a $*$-representation $\Ind_{\Lambda}^\Gamma\Phi$ of $C(X)\rtimes \Gamma$, and thus a covariant $(\Gamma, X)$-representation which we call the \textit{induced} covariant representation. Our elementary observation is that the representation $\Ind_{\Lambda}^\Gamma\Phi$ can be concretely modeled on $\Ind_{\Lambda}^\Gamma\Hi$. As a consequence, we construct a new class of covariant representations (Proposition \ref{newgerminal}) that are germinal in the sense of Kalantar and Scarparo.

The second half of this article concerns pure infiniteness for these group $\textup{C}^*$-algebras. Our interest in this property comes from the main result of \cite{Ki95,KiPh00,Ph00}, which shows that $KK$-theory is a complete invariant for simple, separable, nuclear, purely infinite $\textup{C}^*$-algebras which satisfy the UCT (\textit{Kirchberg} algebras). It is therefore desirable, whenever possible, to identify group $\textup{C}^*$-algebras which are Kirchberg algebras. Our main result in this section resolves pure infiniteness for certain classes of simple group $\textup{C}^*$-algebras.

Our approach uses the Powers-Haagerup-Pisier (PHP) property defined in \cite{Oz25} (see Section \ref{sect4}), which implies the \textit{selflessness} of the $\textup{C}^*$-algebra with respect to its canonical vector state (\cite[Theorem 3.3]{HKEPR25}). While we will not explicitly use selflessness in this paper, we record the definition here for posterity.

\begin{defn}[\cite{Ro25}]
Let $\mathcal A$ be a unital $\textup{C}^*$-algebra equipped with a state $\rho$ that induces a faithful GNS representation. The pair $(\mathcal A,\rho)$, referred to as a \emph{$\textup{C}^*$-probability space}, is said to be \emph{selfless} if $\mathcal A\not\cong\mathbb{C}$ and the first factor embedding into the reduced free product $\textup{C}^*$-algebra $\iota\colon (\mathcal A,\rho) \hookrightarrow (\mathcal A,\rho) \star (\mathcal A,\rho)$ is \emph{existential}, i.e., there exists an ultrafilter $\mathcal{U}$ and an embedding $\theta\colon (\mathcal A,\rho)\star (\mathcal A,\rho) \hookrightarrow (\mathcal A^{\mathcal{U}},\rho^{\mathcal{U}})$ such that the diagram
\vspace{-3mm}
\[\begin{tikzcd}
	{(\mathcal A,\rho)} && {(\mathcal A^{\mathcal{U}},\rho^{\mathcal{U}})} \\
	& {(\mathcal A,\rho)\star(\mathcal A,\rho)}
	\arrow["\delta", hook, from=1-1, to=1-3]
	\arrow["{\iota}", hook, from=1-1, to=2-2]
	\arrow["\theta", hook, from=2-2, to=1-3]
\end{tikzcd}\]
\vspace{-2mm}
commutes, where $\delta$ denotes the diagonal embedding.
\end{defn}

Examples of discrete groups that generate selfless reduced $\textup{C}^*$-algebras include acylindrically hyperbolic groups with trivial finite radical \cite{Oz25} and linear groups with trivial amenable radical \cite{Vi26}. Selflessness has been established for large classes of free products \cite{FKOCP25,HKEPR25, HKER25}, amalgamated free products \cite{GKEPL26}, and graph products \cite{FKOCP25}; this property has also seen recent generalizations to the relative setting \cite{GJKEPR26} and the setting of inclusions \cite{HKEPR25}.

The relevance of selflessness in this article comes from a standard theorem of Robert \cite[Theorem 3.1]{Ro25}, which states that a selfless, traceless $\textup{C}^*$-algebra is automatically purely infinite. In \cite{Oz25}, Ozawa proves that if $\Gamma$ admits a topologically free extreme boundary action, then $\textup{C}^*_r\Gamma$ has Property PHP (and is therefore selfless) with respect to its canonical vector state. Our next theorem can be viewed as an extension of this result to the non-topologically free setting.

\begin{thm}[see Theorem \ref{selfthm}]\label{idkmain}
    Let $X$ be an extreme $\Gamma$-boundary with $\#X>2$. For every $x\in X_0$ and every centrally extendable character $\chi:\Gamma_x\to\T$, the $\textup{C}^*$-algebra $\textup{C}^*_{\Ind_{\Gamma_x}^\Gamma\chi}\Gamma$ has Property \textup{PHP} with respect to the canonical vector state $\varphi$. In particular, the pair $(\textup{C}^*_{\Ind_{\Gamma_x}^\Gamma\chi}\Gamma,\varphi)$ is selfless and we have the following dichotomy:
    \begin{enumerate}
        \item[(i)] If $\Gamma\curvearrowright X$ is topologically free, then $\textup{C}^*_{\Ind_{\Gamma_x}^\Gamma\chi}\Gamma=\textup{C}^*_{r}\Gamma$ is simple, monotracial, has stable rank one, and strict comparison.
        \item[(ii)] If $\Gamma\curvearrowright X$ is not topologically free, then $\textup{C}^*_{\Ind_{\Gamma_x}^\Gamma\chi}\Gamma$ is simple and purely infinite.
    \end{enumerate}
\end{thm}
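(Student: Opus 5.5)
The plan is to follow Ozawa's strategy from \cite{Oz25} for Property PHP, replacing the left regular representation by $\pi=\Ind_{\Gamma_x}^\Gamma\chi$ acting on $\ell^2(\Gamma/\Gamma_x)\cong\ell^2(\Gamma x)$. The canonical vector state is $\varphi(a)=\langle a\delta_{x},\delta_{x}\rangle$. Centrally extendable means that $\chi$ extends to a character $\tilde\chi$ on a subgroup that centralizes the relevant elements. With this, $\pi(g)\delta_y=c(g,y)\delta_{gy}$ for a $\T$-valued cocycle $c$, and $\pi$ is unitarily twisted-equivalent to the permutation representation on the orbit $\Gamma x\subset X$. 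This makes $\pi$ a germinal-type covariant representation: multiplication by $C(X)$ on $\ell^2(\Gamma x)$ together with $\pi$ gives a covariant pair. This is the Rieffel induction model from the first half of the paper, and it sits inside the algebra generated by $\pi$ when needed.

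\textbf{Step 1: Powers-type averaging.} The property PHP requires the following. For each finite set $F\subset\Gamma\setminus\{$elements acting as scalars$\}$ and $\epsilon>0$, we need unitaries (or group elements) $s_1,\dots,s_n$ so that $\|\frac1n\sum_i\pi(s_i)a\pi(s_i)^*\|$ is small for every $a$ in the kernel of $\varphi$ spanned by $F$. The PHP formulation in \cite{Oz25} in fact asks for a "free" family, i.e.\ approximately free Haar-like unitaries relative to $\varphi$.

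We use extremality of $X$ with $\#X>2$. For any proper closed $C$ and nonempty open $U$ there is $g$ with $gC\subset U$. From this, one obtains a ping-pong configuration of elements whose attracting and repelling neighborhoods lie away from $x$, or near it as needed. Since $x\in X_0$, stabilizers vary continuously at $x$. Hence for $g\in F$ with $g\notin\Gamma_x$, and more generally for $g$ acting nontrivially near $x$, we find a neighbourhood where $g$ moves points off themselves. Elements of $\Gamma_x$ that act trivially on a neighbourhood of $x$ lie in $\Gamma_x^\circ$. For those, $\varphi(\pi(g))=\chi(g)$ is a scalar, and the central extendability is used so that $\pi(g)-\chi(g)$ vanishes on the relevant vectors.

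\textbf{Step 2: ping-pong estimate.} We choose $s_1,\dots,s_n$ mapping the complement of a small neighbourhood of $x$ into disjoint small open sets. Then the standard computation gives the norm estimate $\|\sum\pi(s_i)\pi(g)\pi(s_i)^*\|\le 2\sqrt n$ for $g\in F$ with $\varphi(\pi(g))=0$. This follows by splitting $\ell^2(\Gamma x)$ via projections $1_{U_i}$ from the covariant $C(X)$ action, exactly as in Ozawa's argument. We verify that the twist $c$ only introduces phases and does not affect the norm bounds.

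\textbf{Step 3: consequences.} Property PHP then yields selflessness by \cite[Theorem 3.3]{HKEPR25}. Selflessness gives simplicity, and it yields stable rank one and strict comparison in the tracial case by results cited from \cite{Ro25}.

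For (i), topological freeness together with $x\in X_0$ forces $\Gamma_x=\Gamma_x^\circ$ to be trivial on a comeager set. Then continuity gives $\Gamma_x$ trivial, so $\chi$ is trivial and $\pi=\lambda_\Gamma$.

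For (ii), the Corollary on traces shows that $\textup{C}^*_{\pi}\Gamma$ is traceless when $X$ is not topologically free. Extremal boundaries are faithful after passing to $\Gamma/\ker$, and we must check this reduction. Robert's theorem \cite[Theorem 3.1]{Ro25} then gives pure infiniteness.

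\textbf{Main obstacle.} The main obstacle is Step 1/2 for elements $g\in\Gamma_x\setminus\Gamma_x^\circ$. Such elements fix $x$ but not a neighbourhood of it, and we need $\varphi$-centered parts to be averaged out. Continuity at $x$ of the stabilizer map should say these elements move points arbitrarily close to $x$, which supplies the room for ping-pong. Making this quantitative uniformly over $F$ is where I expect the work to lie.
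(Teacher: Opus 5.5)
There is a genuine gap: Steps 1--2 never verify Property PHP as it is used here. Definition \ref{php}, which is the hypothesis of \cite[Theorem 3.3]{HKEPR25}, asks for the following. Given finite $F\subset\ker\varphi$, $\epsilon>0$ and $n$, you need unitaries $u_i$ in the algebra and projections $P_i^+\le P_i$ on $\ell^2(\Gamma/\Gamma_x)$ such that the $P_i$ are pairwise orthogonal, $u_iP_i^+u_i^*\le P_i^+$, $u_i(1-P_i)u_i^*\le P_i^+$, and $\|P_iaP_j\|<\epsilon$ for $a\in F$. This is neither a Powers averaging condition nor an approximate-freeness condition. An estimate like $\|\sum_i\pi(s_i)\pi(g)\pi(s_i)^*\|\le 2\sqrt n$ gives at most Powers-type averaging, i.e.\ simplicity and unique-trace information. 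It does not produce the data $(u_i,P_i,P_i^+)$. So the appeal to \cite[Theorem 3.3]{HKEPR25}, and everything in Step 3 that rests on selflessness, is unsupported. Your ``main obstacle'' is also a phantom. The condition $x\in X_0$ is equivalent to $\Gamma_x=\Gamma_x^\circ$ \cite[Proposition 2.4]{LBMB18}, so $\Gamma_x\setminus\Gamma_x^\circ=\emptyset$ and no quantitative uniformity is needed.

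That equality is exactly the missing idea. Let $E\subset\Gamma$ be the finite set of group elements underlying $F$. Then $E\cap\Gamma_x\subset\Gamma_x^\circ$ is finite, so there is one open $U\ni x$ fixed pointwise by all of $E\cap\Gamma_x$ and with $gU\cap U=\emptyset$ for all $g\in E\setminus\Gamma_x$. Extreme proximality and perfectness of $X$ then give disjoint nonempty open sets $U_i^\pm\subset U$ and elements $t_i\in\Gamma$ with $t_i(X\setminus U_i^-)\subset U_i^+$. Note that the ping-pong sets sit inside a small neighbourhood of $x$, not away from it. Set $B_i^\pm=\{k\Gamma_x: kx\in U_i^\pm\}$, take $u_i=\pi(t_i)$, and let $P_i^+$, $P_i$ be the multiplication operators by $1_{B_i^+}$ and $1_{B_i^+\cup B_i^-}$. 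Then (a) and (b) follow from disjointness and covariance, and (c) holds with $P_iaP_j=0$ exactly:
\begin{itemize}
\item for $g\notin\Gamma_x$, because $g$ displaces $U$;
\item for $g\in\Gamma_x$, because $g$ fixes each $kx\in U$. Hence $k^{-1}gk\in\Gamma_x$, and $\pi(g)$ acts on the vector supported on $k\Gamma_x$ by the scalar $\chi(k^{-1}gk)=\chi(g)$.
\end{itemize}
That last equality is what central extendability provides, in the paper's sense: $\chi$ is the restriction of a conjugation-invariant function on all of $\Gamma$, not the notion you wrote down.

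Finally, your caution in (ii) is warranted, but it cannot be settled by passing to $\Gamma/N$ with $N=\ker(\Gamma\acts X)$. Theorem \ref{traces} yields tracelessness only when $\Gamma/N\acts X$ is not topologically free. For example, take $\mathbb{F}_2\times\Z/2$ acting on $\partial\mathbb{F}_2$ and $\chi=1$. Then $\Gamma_x=N$ for $x\in X_0$, and $\textup{C}^*_{\lambda_{\Gamma/N}}\Gamma\cong\textup{C}^*_r\mathbb{F}_2$, which has a trace. So (ii) should be read for faithful actions.
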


We finish this article with an application to the \textit{Thompson-Stein} groups. These are groups of piecewise linear homeomorphisms introduced by Stein \cite{St92} as a generalization of the Thompson and Higman-Thompson groups \cite{Hi74}. The construction of these groups requires a triple $(\Gamma,\Lambda,\ell)$, where $\Lambda\leq \R^+$ is a nontrivial subgroup, $\Gamma$ is a $\Z\Lambda$-submodule of $\R$ such that $\Lambda\Gamma=\Gamma$, and $1\leq \ell\in\Gamma$. With these parameters, Stein constructed groups $F(\Gamma,\Lambda,\ell)\leq T(\Gamma,\Lambda,\ell)\leq V(\Gamma,\Lambda,\ell)$, generalizing Thompson's $F\leq T\leq V$. For the precise definitions, we refer to Section \ref{application}.

\begin{thm}[see Corollary \ref{TScor}]\label{TSTHM}
    Set $F=F(\Gamma,\Lambda,\ell)$ and $T=T(\Gamma,\Lambda,\ell)$. The $\textup{C}^*$-algebra $\textup{C}^*_{\lambda_{T/F}}T$ is simple and purely infinite.
\end{thm}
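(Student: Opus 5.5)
The strategy is to realize $\lambda_{T/F}$ as a quasi-regular representation $\Ind_{T_x}^T 1$ for a point $x$ of a non-topologically free extreme $T$-boundary lying in the continuity locus $X_0$. Once this is in place, Theorem \ref{idkmain}(ii) applied with the trivial character gives the conclusion. The natural boundary is the circle $X=\R/\ell\Z$, or, if $\Gamma$ is not all of $\R$, a suitable "Cantorized" version of it in which the breakpoints in $\Gamma$ are doubled. The group $T=T(\Gamma,\Lambda,\ell)$ acts on $X$ by piecewise linear homeomorphisms with slopes in $\Lambda$ and breakpoints in $\Gamma/\ell\Z$.

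The point to take is $x=0$, whose stabilizer should be exactly $F$: an element of $T$ fixing $0$ is a piecewise linear homeomorphism of $[0,\ell]$ fixing the endpoints, which is precisely the definition of $F(\Gamma,\Lambda,\ell)$. The trivial character on $F$ is centrally extendable, since it extends to the trivial character of $T$. Hence $\lambda_{T/F}=\Ind_{T_0}^T 1$, and we are in the setting of Theorem \ref{idkmain}.

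The steps, in order, are as follows.

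\begin{enumerate}
\item Show that $T\curvearrowright X$ is minimal and extremely proximal, so $X$ is an extreme boundary; note that $\#X>2$ is clear. Minimality follows because the $T$-orbit of a point of $\Gamma/\ell\Z$ is dense, since $\Gamma$ is dense in $\R$ when $\Lambda\neq\{1\}$. Extreme proximality follows because for any proper closed set $C$ and nonempty open set $U$ we can build an element of $F$-type, supported on an interval, that contracts $C$ into $U$. This uses piecewise linear maps with slopes in $\Lambda$ contracting a large interval into a small one, rotating first by an element of $T$.
\item Check that the action is not topologically free. Elements of $F$ supported on a small interval fix a nonempty open set while being nontrivial.
\item Show that $0\in X_0$, i.e.\ that $\textup{Stab}$ is continuous at $0$. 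For this I would use the characterization from \cite{LBMB18}: $x\in X_0$ if and only if $\Gamma_x=\Gamma_x^\circ$, where $\Gamma_x^\circ$ consists of the elements fixing a neighbourhood of $x$.
\end{enumerate}

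I expect the third step to be the main obstacle. Its difficulty depends on the choice of $X$. On the genuine circle, an element of $F$ may have a slope different from $1$ at $0$, so $F\neq T_0^\circ$ and $0\notin X_0$. The remedy I would try first is to pass to a boundary on which the germ at $0$ is split. One option is to take $X$ to be the circle with $0$ (and its orbit, i.e.\ $\Gamma/\ell\Z$) blown up into two points $0^\pm$. The stabilizer of $0^+$ in $T$ is then still $F$, since $T$ preserves orientation. Continuity of $\textup{Stab}$ at $0^+$ would follow because $0^+$ is no longer a limit of points whose stabilizers contain slope-changing germs: neighbourhoods of $0^+$ are half-open intervals $[0^+,\epsilon)$.

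If this still fails, I would instead verify directly that $\textup{C}^*_{\lambda_{T/F}}T$ is simple by comparing it with $\Ind_{T_0^\circ}^T 1$ via Proposition \ref{simple}. I would then obtain pure infiniteness from Property PHP for the vector state at $\delta_{F}$, rerunning the argument of Theorem \ref{selfthm} with the explicit contraction elements from the first step.
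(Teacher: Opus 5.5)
There is a genuine gap at your Step 3, and neither of your remedies closes it.

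\textbf{The blow-up fails.} In the doubled circle, take $g\in F$ with $g'_+(0)=p\neq 1$. It still fixes $0^+$, but it moves every point of $(0^+,\epsilon)$. So $T_{0^+}=F$, while $T_{0^+}^\circ=\{g\in F: g'_+(0)=1\}\subsetneq F$, and hence $0^+\notin X_0$. Splitting the left and right germs does not remove the right germ; the stabilizers of points $z\to 0^+$ converge to this smaller group, not to $F$.

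\textbf{No boundary can work.} $F$ contains an element $g$ whose only fixed point on $\R/\ell\Z$ is $0$: take slopes $p,1,p^{-1}$ on $[0,a]$, $[a,\ell-pa]$, $[\ell-pa,\ell]$, with $1<p\in\Lambda$ and $a\in\Gamma$ small. Suppose $y$ were a point of an extreme boundary $Y$ with $T_y=T_y^\circ=F$. Then $g$ would fix pointwise a neighbourhood $W$ of $y$. For any $h$ with $hy\in W$ we get $h^{-1}gh\in F$, hence $g(h(0))=h(0)$. This forces $h(0)=0$, so $h\in F$ and $hy=y$. Thus $W\cap Ty=\{y\}$, which contradicts the density of $Ty$ in the perfect space $Y$.

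\textbf{The fallback does not supply a substitute.} Proposition \ref{simple} assumes $x\in X_0$. The proof of Theorem \ref{selfthm} also needs it, through Lemma \ref{PHPlem}(iv). Concretely, take projections $P_i=\Ind_F^T\rho(1_{B_i})$ and the element $\lambda_{T/F}(g)-1\in\ker\varphi$ for the $g$ above. If $kF\in B_i$ with $kF\neq F$, then $gkF\neq kF$ and
\[
\langle P_i(\lambda_{T/F}(g)-1)P_i\delta_{kF},\delta_{kF}\rangle=-1 .
\]
So condition (c) forces every $B_i\subseteq\{F\}$, which is incompatible with (a) and (b).

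\textbf{The missing idea.} This is the paper's route: stay on $X=\R/\ell\Z$, work at a \emph{generic} point $y\in X_0$, and compare representations rather than stabilizers.
\begin{enumerate}
\item The map $g\mapsto (g'_-(0),g'_+(0))$ embeds $T_0/T_0^\circ$ into $\Lambda\times\Lambda$. So this quotient is amenable, and Lemma \ref{constant} gives $\lambda_{T/F}\prec\lambda_{T/T_y}$.
\item Your Steps 1--2 together with Theorem \ref{idkmain}(ii), applied at $y$ with the trivial character, show that $\textup{C}^*_{\lambda_{T/T_y}}T$ is simple and purely infinite.
\item The weak containment gives a canonical surjection $\textup{C}^*_{\lambda_{T/T_y}}T\to\textup{C}^*_{\lambda_{T/F}}T$, and simplicity makes it injective.
\end{enumerate}
So the two algebras are isomorphic and both properties transfer to $\textup{C}^*_{\lambda_{T/F}}T$. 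No PHP statement for the vector state at $\delta_F$ is needed.
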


A technicality in the proof of Theorem \ref{TSTHM} is that $F$ occurs as a stabilizer $T_x$ of a non-generic point $x\in X\setminus X_0$ for the canonical extreme boundary action $T\acts X = \R/\ell\Z$, so that Theorem \ref{idkmain} cannot be directly applied. This is resolved with the observation that $T_x/T_x^\circ$ is an abelian and thus amenable group; see Lemma \ref{constant} and Theorem \ref{TSstuff}.

The result in Theorem \ref{TSTHM} makes progress on an operator-algebraic problem related to the Thompson groups $F\leq T\leq V$. In \cite{KS2022}, Kalantar and Scarparo show that $\textup{C}^*_{\lambda_{T/F}}T$ is simple, traceless, and not stably finite, and therefore a candidate for a Kirchberg algebra. Theorem \ref{TSTHM} confirms that $\textup{C}^*_{\lambda_{T/F}}T$ is simple, separable, and purely infinite. The nuclearity of $\textup{C}^*_{\lambda_{T/F}}T$ currently remains open.

\section{Preliminaries}\label{prelim}

\subsection*{Topological dynamics} Throughout this paper, $\Gamma$ will denote a countable discrete group with neutral element $e$. The conjugacy class of an element $g\in\Gamma$ will be denoted by $\textup{Cl}(g)$. Given a compact space $X$, we will write $\Gamma\acts X$ for an action of $\Gamma$ by homeomorphisms on $X$. For $g\in\Gamma$ and $x\in X$, we set
\begin{itemize}
\item $\text{Fix}(g) = \{x\in X: g x = x\}$;
\item $\Gamma_x = \{g\in\Gamma: g x = x\}$; and
\item $\Gamma_x^{\circ} = \{g\in\Gamma: g\text{ fixes pointwise a neighborhood of $x$}\}$.
\end{itemize}

We say that the action is 
\begin{itemize}
\item \textit{minimal} if $ X$ has no proper, non-empty, closed $\Gamma$-invariant subspaces;
\item \textit{topologically free} if, for all $e\neq g\in\Gamma$, $\text{Fix}(g)$ has empty interior;
\item \textit{strongly proximal} if, for every $\nu\in \text{Prob}(\Gamma)$, its $\Gamma$-orbit satisfies $\overline{\Gamma\nu}\cap X\neq\emptyset$;
\item \textit{extremely proximal} if, for every pair of nonempty open subsets $U, V\subset  X$, there is $g\in\Gamma$ with $g(X\setminus U)\subset V$.
\end{itemize}
The $\Gamma$-space $X$ is called a \textit{$\Gamma$-boundary} if it is minimal and strongly proximal, and an \textit{extreme $\Gamma$-boundary} if it is extremely proximal. We note that every extreme boundary is minimal, and when $\#X>2$, it is also a boundary \cite[Theorem 2.3]{Gl74}.

We recall that there is a $\Gamma$-boundary $\partial_F\Gamma$, characterized up to $\Gamma$-equivariant homeomorphism by the universal property that every $\Gamma$-boundary is a $\Gamma$-equivariant factor of $\partial_F\Gamma$. The $\Gamma$-space $\partial_F\Gamma$ is called the \textit{Furstenberg boundary}. Given any $\Gamma$-boundary $X$, we will denote by $\mathfrak{b}_X: \partial_F\Gamma\to X$ the unique $\Gamma$-map onto $X$. For each $g\in\Gamma$, we set $\Delta_g = \overline{\mathfrak{b}_X^{-1}(\text{Fix}(g)^\circ)}$.

Equipped with the product topology, the set $\text{Sub}(\Gamma)$ of subgroups of $\Gamma$ is a compact space, called the \textit{Chabauty space} of $\Gamma$. We denote by $X_0$ the set of continuity points for the stabilizer map $X\ni x\mapsto \Gamma_x\in \text{Sub}(\Gamma)$. We recall that $X_0$ is a dense $G_\delta$ subset of $X$, and that $x\in X_0$ if and only if $\Gamma_x = \Gamma_x^\circ$ \cite[Lemma 2.2, Proposition 2.4]{LBMB18}.

\subsection*{\textup{C}$^*$-dynamical systems} Let $\mathcal{A}$ and $\mathcal{B}$ be unital $\textup{C}^*$-algebras. A linear map $\varphi: \mathcal{A}\to \mathcal{B}$ is called \textit{completely positive} if the amplifications $\varphi\otimes \id_n: \mathcal{A}\otimes \mathbb{M}_n\C\to \mathcal{B}\otimes \mathbb{M}_n\C$ are positive for every $n\in\mathbb{N}$. If $\varphi$ is unital, then $\varphi$ is called a \textit{unital completely positive} (ucp) map. We recall that by the Schwarz inequality \cite[Proposition 3.3]{Paulsen2003}, the faithful kernel $I_\varphi := \{a\in \mathcal{A}: \varphi(a^*a) = 0\}$ is a left ideal in $\mathcal{A}$ for any ucp $\varphi: \mathcal{A}\to \mathcal{B}$. 

Throughout this paper, an action of a group $\Gamma$ on a unital $\textup{C}^*$-algebra will be understood to be by $*$-automorphisms.

\begin{thm}[\cite{KK14}, Theorem 3.12]\label{rigidity}
The $\Gamma$-\textup{C}$^*$-algebra $C(\partial_F\Gamma)$ is injective in the category of $\Gamma$-\textup{C}$^*$-algebras with $\Gamma$-ucp maps. Furthermore, if $X$ is a $\Gamma$-boundary, then the unique $\Gamma$-equivariant ucp $\varphi: C( X)\to C(\partial_F\Gamma)$ is given by $\varphi(f) = f\circ \mathfrak{b}_X$ \textup{(}$f\in C( X)$\textup{)}.
\end{thm}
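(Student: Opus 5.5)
The statement has two parts: injectivity of $C(\partial_F\Gamma)$ in the category of $\Gamma$-$\textup{C}^*$-algebras with $\Gamma$-ucp maps, and rigidity for boundaries. I plan to prove them in that order, deriving the second from the first together with the universal property of $\partial_F\Gamma$.

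\textbf{Injectivity.} Given $\Gamma$-$\textup{C}^*$-algebras $\mathcal A\subset\mathcal B$ and a $\Gamma$-ucp map $\psi:\mathcal A\to C(\partial_F\Gamma)$, I need a $\Gamma$-ucp extension $\mathcal B\to C(\partial_F\Gamma)$. The steps are as follows.

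1. Arveson's extension theorem, applied after embedding $C(\partial_F\Gamma)\subset \ell^\infty\Gamma\otimes$-type injective objects, produces ucp extensions. The standard route uses the $\Gamma$-injective algebra $\ell^\infty(\Gamma, C(\partial_F\Gamma))$, or simply $\ell^\infty(\Gamma)$.
2. Since $\ell^\infty\Gamma$ is $\Gamma$-injective, averaging with the translation action turns any ucp extension into a $\Gamma$-equivariant ucp extension.
3. The key input is that $C(\partial_F\Gamma)$ is the $\Gamma$-injective envelope of $\mathbb C$, i.e.\ Hamana's $I_\Gamma(\mathbb C)$. By Hamana's theory there is a minimal $\Gamma$-projection $\ell^\infty\Gamma\to\ell^\infty\Gamma$ whose image is a $\Gamma$-injective, $\Gamma$-rigid, $\Gamma$-essential commutative algebra.
4. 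Its spectrum is a $\Gamma$-boundary: minimality and strong proximality both follow from rigidity, because any $\Gamma$-map from $C(Y)$ into it must be the identity. By the universal property, this spectrum is therefore $\partial_F\Gamma$.
5. Injectivity then follows from that of $I_\Gamma(\mathbb C)$, by composing the extension into $\ell^\infty\Gamma$ with the $\Gamma$-projection.

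I expect the main obstacle to be the identification in steps 3--4: showing that Hamana's $\Gamma$-injective envelope of $\mathbb C$ is commutative and has spectrum exactly the Furstenberg boundary. This uses the essential/rigid property together with a Choquet-type argument on states.

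\textbf{Rigidity for boundaries.} Let $X$ be a $\Gamma$-boundary and $\varphi:C(X)\to C(\partial_F\Gamma)$ a $\Gamma$-ucp map. For each $y\in\partial_F\Gamma$, composing with evaluation gives a probability measure $\mu_y=\varphi^*(\delta_y)$ on $X$. The assignment $y\mapsto\mu_y$ is a continuous $\Gamma$-map $\partial_F\Gamma\to\textup{Prob}(X)$.

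By strong proximality of $X$ and minimality of $\partial_F\Gamma$, the image is a closed invariant set whose closure meets $X$. Concretely: pick $y$ and $g_i$ with $g_i\mu_y\to\delta_{x}$, and pass to a subnet with $g_iy\to y'$. Then $\mu_{y'}=\delta_x$ is a point mass, so the set of $y$ with $\mu_y$ a point mass is nonempty, closed and invariant, hence all of $\partial_F\Gamma$ by minimality.

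This gives a $\Gamma$-map $\partial_F\Gamma\to X$, which must equal $\mathfrak b_X$: every $\Gamma$-map between boundaries out of $\partial_F\Gamma$ is unique, by rigidity of $\partial_F\Gamma$, i.e.\ the identity being the only $\Gamma$-endomorphism. Hence $\varphi(f)=f\circ\mathfrak b_X$.
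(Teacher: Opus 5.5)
Your outline is the standard Kalantar--Kennedy route; the paper itself gives no proof and only quotes their result. That route is: realize Hamana's $\Gamma$-injective envelope of $\C$ as the image of a minimal $\Gamma$-projection on $\ell^\infty\Gamma$, show its spectrum $B$ is a boundary, identify $B$ with $\partial_F\Gamma$, then use a point-mass argument for the second clause.

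Your second half is correct. The map $y\mapsto\varphi^*\delta_y$ is a continuous $\Gamma$-map into $\textup{Prob}(X)$, and your subnet argument shows it takes values in the point masses. Only the last justification is off. That $\partial_F\Gamma$ has no nontrivial $\Gamma$-endomorphisms does not by itself give uniqueness of $\Gamma$-maps $\alpha,\beta\colon\partial_F\Gamma\to X$. The direct argument is to apply your own point-mass argument to $y\mapsto\frac12(\delta_{\alpha(y)}+\delta_{\beta(y)})$, which uses strong proximality of the target, not rigidity of the source.

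The injectivity half has genuine gaps, at exactly the steps that carry the content.

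\textbf{Step 2.} There is no averaging over $\Gamma$ when $\Gamma$ is non-amenable, which is the only case where $\partial_F\Gamma$ is not a point. $\Gamma$-injectivity of $\ell^\infty\Gamma$ comes instead from a Poisson transform: extend the state $\delta_e\circ\phi$ to a state $\tilde\omega$ on the larger algebra and set $\tilde\phi(b)(g)=\tilde\omega(g^{-1}\cdot b)$.

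\textbf{Step 4.} Rigidity alone yields neither minimality nor strong proximality; you also need $\Gamma$-injectivity of $C(B)$ to produce maps back into $C(B)$.
For minimality, a closed invariant $Y\subset B$ gives a $\Gamma$-ucp map $s\colon C(Y)\to C(B)$. Rigidity forces $s\circ(\text{restriction})=\textup{id}$, so $Y=B$.
For strong proximality, take $\mu\in\textup{Prob}(B)$, $K=\overline{\Gamma\mu}$, the $\Gamma$-ucp map $\theta\colon C(B)\to C(K)$ with $\theta(f)(\nu)=\nu(f)$, and a $\Gamma$-ucp map $\psi\colon C(K)\to C(B)$. Rigidity gives $\psi\circ\theta=\textup{id}$. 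The Kadison--Schwarz inequality applied to both maps then yields $\psi\bigl(\theta(|f|^2)-|\theta(f)|^2\bigr)=0$. So every measure $\psi^*\delta_b$ is supported in $\{\nu\in K:\nu(|f|^2)=|\nu(f)|^2\ \text{for all } f\}$, which consists of point masses, and hence $K$ meets $B$.

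\textbf{Identifying $B$ with $\partial_F\Gamma$.} Knowing that $B$ is a boundary only gives a factor map $\partial_F\Gamma\to B$. You must also show that every boundary $X$ is a factor of $B$. To do this, extend the unit to a $\Gamma$-ucp map $C(X)\to C(B)$ and run your second-half argument with the minimal space $B$ in place of $\partial_F\Gamma$. The two factor maps between $B$ and $\partial_F\Gamma$ are then mutually inverse, by uniqueness of $\Gamma$-maps from a minimal space into a boundary.
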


Let $\mathcal{A}$ be a unital $\Gamma$-$\textup{C}^*$-algebra. A $\Gamma$-ucp map $\varphi: \mathcal{A}\to C(\partial_F\Gamma)$ is called a \textit{boundary map}. The nonempty convex space of all boundary maps on $\mathcal{A}$ is denoted by $\textup{BD}(\mathcal{A})$.

\subsection*{Unitary representations and induction} 
Let $X$ be a compact $\Gamma$-space, and $\nu$ a $\sigma$-finite, quasi-invariant measure on $X$. The \textit{Koopman representation} of the triple $(\Gamma, X, \nu)$ is the covariant representation $(L^2(X, \nu), \kappa, \rho)$ defined by $[\kappa(g)\xi](x) =_{\text{a.e.}} (\frac{\textup{d}g\nu}{\textup{d}\nu})^{1/2}\xi(g^{-1}x)$ and $[\rho(f)\xi](x) =_{\text{a.e.}}f(x)\xi(x)$ ($x\in X$, $g\in\Gamma$, $f\in C(X)$.)

We now recall the \textit{induction} of unitary representations. Let $\Lambda$ be a subgroup of $\Gamma$, and $(\sigma, \Ki)$ a unitary representation of $\Lambda$. We will consider, up to unitary equivalence, the following three equivalent models for the resulting \textit{induced representation} $(\Ind_{\Lambda}^\Gamma\sigma, \Ind_{\Lambda}^\Gamma\Ki)$ of $\Gamma$.
\begin{enumerate}
\item[(1)]\label{induction1} Fix a left transversal $\mathcal{R}$ of $\Gamma/\Lambda$, and consider the Hilbert space $\tilde{\Ki}$ of all functions $\xi: \Gamma\to \mathcal{\Ki}$ for which (a) $\xi(th) = \sigma(h^{-1})\xi(t)$ for all $t\in\Gamma$, $h\in \Lambda$ and (b) $\sum_{r\in\mathcal{R}}\|f(r)\|^2 < \infty$. With pointwise operations and the inner product defined by the formula $\langle \xi, \eta\rangle = \sum_{r\in\mathcal{R}}\langle \xi(r), \eta(r)\rangle$, $\tilde{\Ki}$ is a well-defined Hilbert space. Note that the series in (b) and the formula for the inner product do not depend on the choice of left transversal $\mathcal{R}$. The induced representation of $(\sigma, \Ki)$ can be defined as the unitary representation $(\tilde{\sigma}, \tilde{\Ki})$ given by
\begin{align*}
[\tilde{\sigma}(g)\xi](t) = \xi(g^{-1}t),
\end{align*}
for $\xi\in\tilde{\Ki}$ and $g, t\in\Gamma$.
\item[(2)]\label{induction2} (Cocycle formula) Fix a left cocycle $c: \Gamma\times \Gamma/\Lambda\to \Lambda$, and consider the unitary representation $(\sigma', \ell^2(\Gamma/\Lambda, \mathcal{K}))$ given by
\begin{align*}
[\sigma'(g)\xi](t\Lambda) = \sigma(c(g^{-1}, t\Lambda)^{-1})\xi(g^{-1}t\Lambda),
\end{align*}
for $g, t\in\Gamma$ and $\xi\in \ell^2(\Gamma/\Lambda, \mathcal{K})$. We recall that the map $\mathcal{U}: \ell^2(\Gamma/\Lambda, \mathcal{K})\to \tilde{\Ki}$ given by $[\mathcal{U}\xi](t) = \sigma(c(t, \Lambda)^{-1})\xi(t\Lambda)$ ($\xi\in \ell^2(\Gamma/\Lambda, \mathcal{K}), t\in\Gamma$) defines a unitary equivalence between $(\sigma', \ell^2(\Gamma/\Lambda, \mathcal{K}))$ and $(\tilde{\sigma}, \tilde{\Ki})$.
\item[(3)] (Rieffel induction) Given a unital inclusion of \textup{C}$^*$-algebras $\mathcal{A}\subset\mathcal{B}$ and a conditional expectation $\mathbb{E}_\mathcal{A}: \mathcal{B}\to \mathcal{A}$, there is a standard construction which induces a $*$-representation of $\mathcal{A}$ to a $*$-representation of $\mathcal{B}$. Viewing $\mathcal{B}$ as a left $\mathcal{B}$-module and a right $\mathcal{A}$-module, endow the algebraic tensor product $L = \mathcal{B}\odot_{\mathcal{A}}\mathcal{K}$ with the semi-inner product $\langle \cdot, \cdot\rangle_{\mathbb{E}}: L\times L\to \C$, determined on simple tensors by the formula $
\langle a\otimes \xi, c\otimes \eta\rangle_{\mathbb{E}} = \langle \sigma(\mathbb{E}(c^*a))\xi, \eta\rangle_{\Ki}.$
Setting $I = \{v\in L: \langle v, v\rangle_{\mathbb{E}} = 0\}$, let $_{\Lambda}^\Gamma\Ki$ denote the completion of the vector space quotient $V/I$ with respect to the inner product $\langle \cdot, \cdot\rangle_{\mathbb{E}}$. The formula 
\begin{align*}
([{_\mathcal{A}^\mathcal{B}}\pi](b))(v + I) = bv + I \qquad(v\in L, b\in \mathcal{B})
\end{align*}
determines a $*$-representation $({{_\mathcal{A}^\mathcal{B}}\Ki}, {{_\mathcal{A}^\mathcal{B}}\pi})$ of $\mathcal{B}$. In the particular case where $\mathcal{B} = \textup{C}^*\Gamma$, $\mathcal{A} = \textup{C}^*\Lambda$, and $\mathbb{E}_\Lambda: \textup{C}^*\Gamma\to \textup{C}^*\Lambda$ is the canonical conditional expectation obtained by restricting the support of an element of $\C\Gamma$ to $\Lambda$, we recall that the map $\mathcal{V}:$ ${_\mathcal{A}^\mathcal{B}}\Ki\to \hat{\Ki}$ determined by the formula 
\begin{align}\label{rieffel}
[\mathcal{U}(a\otimes \xi)] = 
\sigma'(a)\xi_e
\end{align}
(where $\xi_e$ is the element of $\tilde{\Ki}$ supported on $\Lambda$ and determined by $\xi_e(e) = \xi$) is a unitary equivalence between $({{_\mathcal{A}^\mathcal{B}}\Ki}, {{_\mathcal{A}^\mathcal{B}}\sigma})$ and $(\tilde{\sigma}, \tilde{\Ki})$.
\end{enumerate}
These three representations will all be identified up to unitary equivalence, and denoted by $(\Ind_{\Lambda}^\Gamma\Ki, \Ind_{\Lambda}^\Gamma\sigma)$.

The following fact is well-known to experts, and will be useful in Section 5. We record a proof for the reader's convenience.

\begin{lem}\label{constant}
    Let $\Gamma\curvearrowright X$ be a minimal action. If $x\in\Gamma$ is such that $\Gamma_x/\Gamma_x^\circ$ is amenable, then $\lambda_{\Gamma/\Gamma_x}\prec\lambda_{\Gamma/\Gamma_y}$, for all $y\in X$. 
\end{lem}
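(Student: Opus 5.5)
The plan is a chain of weak containments:
\[
\lambda_{\Gamma/\Gamma_x}\prec\lambda_{\Gamma/\Gamma_x^\circ},\qquad
\lambda_{\Gamma/\Gamma_x^\circ}\prec\lambda_{\Gamma/\Gamma_y^\circ},\qquad
\lambda_{\Gamma/\Gamma_y^\circ}\prec\lambda_{\Gamma/\Gamma_y}.
\]
(The statement should read $x\in X$.) I interpret $\prec$ as weak containment, checked on positive definite functions: $\pi\prec\sigma$ if every diagonal matrix coefficient of $\pi$ is a pointwise limit of finite sums of diagonal coefficients of $\sigma$. For quasi-regular representations $\lambda_{\Gamma/H}$ it suffices to approximate $\mathbbm{1}_H$, the coefficient of $\delta_H$, since the $\Gamma$-translates of $\delta_H$ span a dense subspace. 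I will also use the standard fact that for $H'\le H$, $\lambda_{\Gamma/H}\prec\lambda_{\Gamma/H'}$ if and only if $1_H\prec\lambda_{H/H'}$, by continuity of induction in stages.

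\emph{First step.} Since $\Gamma_x^\circ$ is normal in $\Gamma_x$, $\lambda_{\Gamma_x/\Gamma_x^\circ}$ is the regular representation of the amenable group $\Gamma_x/\Gamma_x^\circ$, lifted to $\Gamma_x$. By Hulanicki--Reiter it weakly contains the trivial representation. Inducing from $\Gamma_x$ to $\Gamma$ and using continuity of induction gives $\lambda_{\Gamma/\Gamma_x}=\Ind_{\Gamma_x}^\Gamma 1\prec \Ind_{\Gamma_x}^\Gamma\lambda_{\Gamma_x/\Gamma_x^\circ}=\lambda_{\Gamma/\Gamma_x^\circ}$.

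\emph{Third step.} For every $y$, $\Gamma_y^\circ\le\Gamma_y$. Restricting $\delta_{\Gamma_y^\circ}$ summed over cosets does not directly work, so I would use the map $\Gamma/\Gamma_y^\circ\to\Gamma/\Gamma_y$ differently. Unlike the first step, amenability is not available for $y$, and this step needs care. In fact I do not need it: I only need to compare with $\lambda_{\Gamma/\Gamma_y}$ directly. I therefore replace the second and third steps by a single step showing $\lambda_{\Gamma/\Gamma_x^\circ}\prec\lambda_{\Gamma/\Gamma_y}$.

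\emph{Second step (main obstacle).} By minimality choose $g_n\in\Gamma$ with $g_ny\to x$. The function $\mathbbm{1}_{\Gamma_{g_ny}}=\mathbbm{1}_{g_n\Gamma_yg_n^{-1}}$ is the coefficient of $\lambda_{\Gamma/\Gamma_y}$ at $\delta_{g_n\Gamma_y}$. Pass to a subsequence so that $\Gamma_{g_ny}\to K$ in the Chabauty topology; a pointwise limit of these coefficients is a coefficient weakly contained in $\lambda_{\Gamma/\Gamma_y}$, so $\lambda_{\Gamma/K}\prec\lambda_{\Gamma/\Gamma_y}$. I then claim $\Gamma_x^\circ\le K\le\Gamma_x$:
\begin{itemize}
\item If $g\in\Gamma_x^\circ$, then $g$ fixes a neighbourhood of $x$, which contains $g_ny$ for large $n$, so $g\in\Gamma_{g_ny}$ eventually.
\item If $g\in K$, then $g g_ny=g_ny$ along the subsequence, and continuity gives $gx=x$.
\end{itemize}
Finally, since $\Gamma_x^\circ\le K\le\Gamma_x$ with $K/\Gamma_x^\circ$ a subgroup of an amenable group, the argument of the first step applied to $K$ gives $\lambda_{\Gamma/\Gamma_x}\prec\lambda_{\Gamma/\Gamma_x^\circ}$. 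It remains to reach $\lambda_{\Gamma/K}$. I would verify $\lambda_{\Gamma/\Gamma_x}\prec\lambda_{\Gamma/K}$ directly, because $\lambda_{\Gamma_x/K}$ weakly contains $1_{\Gamma_x}$: the coset space $\Gamma_x/K$ is a quotient of the amenable group $\Gamma_x/\Gamma_x^\circ$, so the action is amenable and the trivial representation is weakly contained by Reiter's condition on homogeneous spaces of amenable groups. Inducing yields $\lambda_{\Gamma/\Gamma_x}\prec\lambda_{\Gamma/K}\prec\lambda_{\Gamma/\Gamma_y}$.

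The delicate points are the Chabauty-limit step and the claim that limits of such coefficients remain in the weak closure. For the latter I will use that weak containment is characterized by pointwise convergence on $\Gamma$ of normalized positive definite functions, which is available since $\Gamma$ is discrete.
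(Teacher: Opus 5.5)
Your final argument is the paper's proof: take a Chabauty limit $K$ of the conjugates $g\Gamma_y g^{-1}=\Gamma_{gy}$ along $gy\to x$, check that $\Gamma_x^\circ\le K\le\Gamma_x$, and obtain $\lambda_{\Gamma/K}\prec\lambda_{\Gamma/\Gamma_y}$. You verify this weak containment directly via pointwise convergence of the indicator functions, where the paper cites Bekka--Kalantar. You then get $\lambda_{\Gamma/\Gamma_x}\prec\lambda_{\Gamma/K}$ from co-amenability of $K$ in $\Gamma_x$ and continuity of induction, where the paper cites Kalantar--Scarparo. The abandoned three-step chain, your ``first step'', and the claimed converse direction of your ``standard fact'' are unused and can be deleted.

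One small repair is needed. $X$ is only assumed compact and need not be first countable (e.g.\ $X=\partial_F\Gamma$), so a sequence $g_n y\to x$ need not exist. Use a net $g_iy\to x$ and a subnet converging in $\textup{Sub}(\Gamma)$, exactly as the paper does; nothing else in your argument changes.
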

\begin{proof}
    Let $y\in X$. By minimality, we can find a net $g_i\in \Gamma$ such that $g_iy\to x$. Passing to a subnet, we can assume that the net $g_i\Gamma_yg_i^{-1}$ converges to $K$ in ${\rm Sub}(\Gamma)$. It is easy to see that $\Gamma_x^\circ \subset K\subset \Gamma_x$. By \cite[Proposition 3.3]{BeKa20}, we have that $\lambda_{\Gamma/\Gamma_y}\sim \lambda_{\Gamma/K}$. Because $\Gamma_x^\circ$ is co-amenable in $\Gamma_x$, we have that $K$ is co-amenable in $\Gamma_x$. By \cite[Proposition 2.3]{KS2022}, $\lambda_{\Gamma/\Gamma_x}\prec \lambda_{\Gamma/K}$, so that $\lambda_{\Gamma/\Gamma_x}\prec \lambda_{\Gamma/\Gamma_y}$.
\end{proof}

\section{Simplicity and existence of traces}\label{sect3}

Let $\mathcal{A}$ be a unital $\Gamma$-\textup{C}$^*$-algebra, and $\Lambda\leq\Gamma$. Let $\Psi = (\Ki, \sigma, \rho)$ be a covariant representation of the pair $(\Lambda, \mathcal{A})$. Consider the $*$-representation $\Ind_{\Lambda}^\Gamma\rho: \mathcal{A}\to \mathbb{B}(\Ind_{\Lambda}^\Gamma\Ki)$ defined by
\begin{align*}
[\Ind_{\Lambda}^\Gamma\rho(a)]\xi(t) = \rho(t^{-1}\cdot a)\xi(t).
\end{align*}
The next proposition follows from verifying a few routine calculations.
\begin{prop}
The triple $(\Ind_{\Lambda}^\Gamma\Ki, \Ind_{\Lambda}^\Gamma\sigma, \Ind_{\Lambda}^\Gamma\rho)$ is a well-defined covariant representation of the pair $(\Gamma, \mathcal{A})$.
\end{prop}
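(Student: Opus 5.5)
We work throughout in model (1) of the induced representation, so $\Ind_{\Lambda}^\Gamma\Ki=\tilde{\Ki}$ consists of functions $\xi:\Gamma\to\Ki$ with $\xi(th)=\sigma(h^{-1})\xi(t)$ for $t\in\Gamma$, $h\in\Lambda$, and $\Ind_{\Lambda}^\Gamma\sigma$ acts by left translation. The plan has three steps.

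\textbf{Step 1: $\Ind_{\Lambda}^\Gamma\rho(a)$ preserves $\tilde{\Ki}$ and is bounded.} Let $\eta(t)=\rho(t^{-1}\cdot a)\xi(t)$. For $h\in\Lambda$ we compute
\[
\eta(th)=\rho(h^{-1}\cdot(t^{-1}\cdot a))\,\sigma(h^{-1})\xi(t).
\]
Covariance of $\Psi$ gives $\rho(h^{-1}\cdot b)=\sigma(h^{-1})\rho(b)\sigma(h)$ for $b\in\mathcal{A}$. Substituting this yields $\eta(th)=\sigma(h^{-1})\rho(t^{-1}\cdot a)\xi(t)=\sigma(h^{-1})\eta(t)$, so the equivariance condition (a) holds. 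For the norm, $\|\eta(r)\|\le\|a\|\,\|\xi(r)\|$ for each $r$ in a transversal $\mathcal{R}$. Summing over $\mathcal{R}$ shows $\eta\in\tilde{\Ki}$ with $\|\eta\|\le\|a\|\|\xi\|$. This step is the one place where the covariance of $\Psi$ is genuinely needed, and it is the only real check in the proof.

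\textbf{Step 2: $\Ind_{\Lambda}^\Gamma\rho$ is a unital $*$-homomorphism.} Linearity and multiplicativity hold pointwise in $t$, because $b\mapsto\rho(t^{-1}\cdot b)$ is a $*$-homomorphism; unitality is likewise pointwise. For the adjoint, we compute $\langle\Ind\rho(a)\xi,\zeta\rangle=\sum_{r\in\mathcal{R}}\langle\rho(r^{-1}\cdot a)\xi(r),\zeta(r)\rangle$. Since the action is by $*$-automorphisms, this equals $\langle\xi,\Ind\rho(a^*)\zeta\rangle$.

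\textbf{Step 3: covariance.} We need $\Ind\sigma(g)\,\Ind\rho(a)\,\Ind\sigma(g)^{*}=\Ind\rho(g\cdot a)$. Evaluating the left side at $t$ gives
\[
[\Ind\rho(a)\Ind\sigma(g^{-1})\xi](g^{-1}t)=\rho(t^{-1}g\cdot a)\,\xi(t)=[\Ind\rho(g\cdot a)\xi](t).
\]
Here we use that $\Gamma$ acts on $\mathcal{A}$, so that $(g^{-1}t)^{-1}\cdot a=t^{-1}\cdot(g\cdot a)$. This completes the verification. As noted, nothing here is a real obstacle beyond the covariance identity used in Step 1.
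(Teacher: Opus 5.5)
Your proposal is correct and follows the same approach as the paper, which gives no written proof beyond stating that the result ``follows from verifying a few routine calculations'' in model (1). Your three checks are exactly those calculations, and each is done correctly: $\Lambda$-equivariance of $t\mapsto\rho(t^{-1}\cdot a)\xi(t)$ via covariance of $\Psi$ together with the bound $\|a\|$, the pointwise $*$-homomorphism property, and the covariance identity obtained from $(g^{-1}t)^{-1}\cdot a=t^{-1}\cdot(g\cdot a)$.
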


The covariant representations of $(\Lambda, \mathcal{A})$ correspond to the $*$-representations of the universal crossed product $\mathcal{A}\rtimes \Lambda$. Consider the canonical conditional expectation $\mathbb{E}_\Lambda: \mathcal{A}\rtimes \Gamma\to \mathcal{A}\rtimes \Lambda$ obtained by restricting the support of an element of $C_c(\Gamma, \mathcal{A})$ to $\Lambda$. The following fact is straightforward to check using the intertwining unitary from $(\ref{rieffel})$. 

\begin{prop}
The covariant representation $(\Ind_{\Lambda}^\Gamma\Ki, \Ind_{\Lambda}^\Gamma\sigma, \Ind_{\Lambda}^\Gamma\rho)$ is unitarily equivalent to the induced representation (in the sense of Rieffel) of the $*$-representation $\rho\rtimes\sigma: \mathcal{A}\rtimes \Lambda\to \mathbb{B}(\Ki)$, with respect to the conditional expectation $\mathbb{E}_\Lambda$.
\end{prop}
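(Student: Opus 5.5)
We construct an explicit isometry from the Rieffel-induced space onto $\Ind_{\Lambda}^\Gamma\Ki$, modeled on the unitary $(\ref{rieffel})$ already recorded for group $\textup{C}^*$-algebras, and then check that it intertwines the two representations of $\mathcal{A}\rtimes\Gamma$. We work with the function model (1) of $\Ind_{\Lambda}^\Gamma\Ki$. For $\xi\in\Ki$ let $\xi_e\in\Ind_{\Lambda}^\Gamma\Ki$ be the function supported on $\Lambda$ with $\xi_e(h)=\sigma(h^{-1})\xi$. Write $\pi=\Ind_{\Lambda}^\Gamma\rho\rtimes\Ind_{\Lambda}^\Gamma\sigma$ for the integrated form, which exists by the preceding proposition. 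On the algebraic tensor product $L=(\mathcal{A}\rtimes\Gamma)\odot_{\mathcal{A}\rtimes\Lambda}\Ki$, define
\begin{align*}
\mathcal{V}(b\otimes\xi)=\pi(b)\xi_e .
\end{align*}

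The first step is to check that $\mathcal{V}$ respects the balancing over $\mathcal{A}\rtimes\Lambda$, i.e.\ $\pi(c)\xi_e=(\rho\rtimes\sigma)(c)\xi$ viewed inside $\Ind_{\Lambda}^\Gamma\Ki$, for $c\in C_c(\Lambda,\mathcal{A})$. It suffices to take $c=a u_h$ with $h\in\Lambda$. Then
\begin{align*}
[\Ind\sigma(h)\xi_e](t)=\xi_e(h^{-1}t),
\end{align*}
which is supported on $\Lambda$ and equals $(\sigma(h)\xi)_e$. Next, $[\Ind\rho(a)\eta_e](k)=\rho(k^{-1}\cdot a)\sigma(k^{-1})\eta$ for $k\in\Lambda$, and by $\Lambda$-covariance of $(\sigma,\rho)$ this equals $\sigma(k^{-1})\rho(a)\eta$, i.e.\ $(\rho(a)\eta)_e(k)$. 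So the compression of $\pi$ to the subspace $\{\xi_e\}\cong\Ki$ restricted to $\mathcal{A}\rtimes\Lambda$ is $\rho\rtimes\sigma$.

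The second step is the inner-product identity
\begin{align*}
\langle\pi(b)\xi_e,\pi(c)\eta_e\rangle=\langle(\rho\rtimes\sigma)(\mathbb{E}_\Lambda(c^*b))\xi,\eta\rangle .
\end{align*}
It is enough to show $\langle\pi(au_g)\xi_e,\eta_e\rangle=0$ for $g\notin\Lambda$, and then use the first step for $g\in\Lambda$. For $g\notin\Lambda$ this holds because $\pi(au_g)\xi_e$ is supported on $g\Lambda$, disjoint from the support $\Lambda$ of $\eta_e$: the operator $\Ind\rho(a)$ acts pointwise, so it preserves supports. Hence $\mathcal{V}$ kills the null space $I$ and descends to an isometry from ${}_{\mathcal{A}\rtimes\Lambda}^{\mathcal{A}\rtimes\Gamma}\Ki$. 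Intertwining is immediate from the definitions: $\mathcal{V}(b'b\otimes\xi)=\pi(b')\mathcal{V}(b\otimes\xi)$.

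The main obstacle, though still routine, is surjectivity. Vectors of the form $\Ind\sigma(g)\xi_e$, which are $\mathcal{V}(u_g\otimes\xi)$ with $u_g\in\mathcal{A}\rtimes\Gamma$ since $\mathcal{A}$ is unital, have the form $(\Ind\sigma(g)\xi_e)(t)=\xi_e(g^{-1}t)$. They are supported on the single coset $g\Lambda$ and take the arbitrary value $\xi$ at $g$. Fixing a transversal $\mathcal{R}$, each element of $\Ind_{\Lambda}^\Gamma\Ki$ is determined by its values on $\mathcal{R}$, so finite sums of such vectors are dense. Therefore $\mathcal{V}$ is unitary, and it implements the claimed equivalence of covariant representations.
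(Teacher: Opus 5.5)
Your proposal is correct and takes the same approach as the paper. The paper's proof is only the remark that the claim is straightforward to check using the intertwining unitary from (\ref{rieffel}), i.e.\ $b\otimes\xi\mapsto\pi(b)\xi_e$. You have carried out exactly those routine checks: balancing over $\mathcal{A}\rtimes\Lambda$ via $\Lambda$-covariance, the inner-product identity via disjointness of the supports $g\Lambda$ and $\Lambda$, intertwining, and density of the coset-supported vectors $\Ind_{\Lambda}^\Gamma\sigma(g)\xi_e$.
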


We use the abbreviation $\Ind_{\Lambda}^\Gamma\Psi = \Ind_{\Lambda}^\Gamma(\Ki, \sigma, \rho)$ for the induced covariant representation $(\Ind_{\Lambda}^\Gamma\Ki, \Ind_{\Lambda}^\Gamma\sigma,\Ind_{\Lambda}^\Gamma\rho)$. We will use this construction to provide more examples of covariant representations of $(\Gamma, X)$ whose crossed products admit unique boundary maps. Throughout the rest of this section, we will use the model (\ref{induction1}) for $\Ind_\Lambda^\Gamma\mathcal{K}$. 

We begin by recalling the definition of a \textit{germinal representation}.

\begin{defn}[{{\cite[Definition 4.1]{KS2022}}}]\label{germ}
Let $ X$ be a compact $\Gamma$-space. A covariant representation $\Phi = (\mathcal{H}, \pi, \rho)$ of $(\Gamma,  X)$ is called \textit{germinal} if, for all $g\in\Gamma$ and all $f\in C( X)$ with $\text{supp}(f)\subset \text{Fix}(g)^{\circ}$, we have
\begin{align*}
\pi(g)\rho(f) = \rho(f).
\end{align*}
\end{defn}

It follows from the next theorem that the \textup{C}$^*$-algebras associated with germinal covariant representations have a rather specific ideal structure.

\begin{thm}[{{\cite[Theorem 4.4]{KS2022}}}]\label{germs}
Let $ X$ be a $\Gamma$-boundary and $\Phi = (\mathcal{H}, \pi, \rho)$ a germinal covariant representation. Set $\mathcal{A} = \textup{C}^*(\pi(\Gamma), \rho(C( X)))\subset \mathbb{B}(\mathcal{H})$. There is a unique boundary map $\varphi\in \textup{BD}(\mathcal{A})$, whose restriction to $\textup{C}^*_\pi\Gamma$ is the unique boundary map $\psi\in \textup{BD}(\textup{C}^*_\pi\Gamma)$.
\end{thm}
In particular (see \cite[Proposition 3.1]{KS2022}), the group \textup{C}$^*$-algebra and crossed product generated by a germinal covariant representation both contain a unique maximal ideal.

\begin{prop}\label{newgerminal}
Let $X$ be a compact $\Gamma$-space, $\Lambda\leq\Gamma$ be a subgroup, and $K\subset X$ a closed, $\Lambda$-invariant subset such that $\Gamma_x^{\circ}\leq \Lambda$ for each $x\in K$. Let $\nu$ be a $\sigma$-finite, $\Lambda$-quasi-invariant Borel measure on $K$, and $(L^2(K, \nu), \kappa, \rho)$ the associated Koopman representation of the dynamical system $(\Lambda,  X)$. The covariant representation $\Ind_{\Lambda}^\Gamma( L^2(K, \nu), \kappa, \rho)$ is a germinal covariant representation of $(\Gamma, X)$.
\end{prop}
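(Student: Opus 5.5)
We verify the germinal condition of Definition \ref{germ} directly in model (1) for $\Ind_\Lambda^\Gamma L^2(K,\nu)$. Elements $\xi$ are functions $\Gamma\to L^2(K,\nu)$ with $\xi(th)=\kappa(h^{-1})\xi(t)$ for $h\in\Lambda$. The representations act by $[\Ind\kappa(g)\xi](t)=\xi(g^{-1}t)$ and $[\Ind\rho(f)\xi](t)=\rho(t^{-1}\cdot f)\xi(t)$, where $(t^{-1}\cdot f)(y)=f(ty)$.

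Fix $g\in\Gamma$ and $f\in C(X)$ with $\operatorname{supp}(f)\subset \operatorname{Fix}(g)^\circ$. We must show that for every $\xi$ and every $t\in\Gamma$,
\[
[\Ind\rho(f)\xi](g^{-1}t)=[\Ind\rho(f)\xi](t),
\]
that is,
\[
\rho(t^{-1}g\cdot f)\,\xi(g^{-1}t)=\rho(t^{-1}\cdot f)\,\xi(t).
\]
Since $\operatorname{supp} f\subset\operatorname{Fix}(g)$, we have $f(gz)=f(z)$ for all $z$: if $z\in\operatorname{Fix}(g)$ this is trivial, and if $z\notin\operatorname{Fix}(g)$ then $gz\notin\operatorname{Fix}(g)$ too, so both sides vanish. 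Thus $g^{-1}\cdot f=f$, and then $(t^{-1}g\cdot f)(y)=f(g^{-1}ty)=f(ty)$. So the multiplication operators on both sides coincide, and the claim reduces to
\[
\rho(t^{-1}\cdot f)\,\xi(g^{-1}t)=\rho(t^{-1}\cdot f)\,\xi(t).
\]

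Set $h=t^{-1}gt$, so that $g^{-1}t=th^{-1}$. The function $t^{-1}\cdot f$ has support inside
\[
t^{-1}\operatorname{Fix}(g)^\circ=\operatorname{Fix}(h)^\circ .
\]
The key step is to show that $\rho(t^{-1}\cdot f)$ is supported on points of $K$ where $h$ acts trivially nearby. Consider $y\in K\cap\operatorname{supp}(t^{-1}\cdot f)\subset K\cap \operatorname{Fix}(h)^\circ$. Since $h$ fixes a neighborhood of $y$, we get $h\in\Gamma_y^\circ\le\Lambda$ by hypothesis.

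The main obstacle is that $h$ depends on $t$ but not on $y$, so we need just one such point: if $K\cap\operatorname{supp}(t^{-1}\cdot f)=\emptyset$, then $\rho(t^{-1}\cdot f)=0$ on $L^2(K,\nu)$ and there is nothing to prove. Otherwise $h\in\Lambda$, and covariance gives
\[
\xi(g^{-1}t)=\xi(th^{-1})=\kappa(h)\xi(t).
\]
It then remains to check that
\[
\rho(t^{-1}\cdot f)\,\kappa(h)=\rho(t^{-1}\cdot f)
\]
on $L^2(K,\nu)$. This is the germinal property of the Koopman representation. Write $F=t^{-1}\cdot f$, with $\operatorname{supp}F\subset U:=\operatorname{Fix}(h)^\circ$. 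Then
\[
[F\,\kappa(h)\eta](y)=F(y)\Big(\tfrac{dh\nu}{d\nu}\Big)^{1/2}(y)\,\eta(h^{-1}y).
\]
For $y\in U\cap K$ we have $h^{-1}y=y$. Moreover $h$ restricts to the identity on the open set $U\cap K$, so $h\nu$ and $\nu$ agree on Borel subsets of $U\cap K$, and the Radon–Nikodym derivative is $1$ $\nu$-a.e. there. Off $U$, $F$ vanishes. Hence $F\kappa(h)\eta=F\eta$ a.e.

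Finally, this pointwise identity in $t$ for all $\xi$ gives
\[
\Ind\kappa(g)\,\Ind\rho(f)=\Ind\rho(f).
\]
Well-definedness of the induced covariant pair is the earlier proposition, so no further checks are needed.
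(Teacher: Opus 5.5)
Your proof is correct and follows essentially the same route as the paper's. For each $t$ you split according to whether $t^{-1}\cdot f$ vanishes on $K$ or $h=t^{-1}gt$ is forced into $\Lambda$ (via a point $y\in K\cap\operatorname{Fix}(h)^\circ$ and $\Gamma_y^\circ\le\Lambda$), and in the latter case you use the Koopman germinality; the differences are cosmetic: you verify that germinality directly through the Radon--Nikodym derivative rather than citing \cite[Proposition 4.3]{KS2022}, and you use $g\cdot f=f$ to write the operator as $\rho(F)\kappa(h)$ instead of $\kappa(h)\rho(F)$.
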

\begin{proof}
By \cite[Proposition 4.3]{KS2022}, the Koopman representation $(L^2(X, \nu), \kappa, \rho)$ is a germinal representation. Let $\textup{supp}(f)\subset \text{Fix}(g)^{\circ}$, $t\in\Gamma$, $x\in K$, and $\xi\in \Ind_{\Lambda}^\Gamma L^2(K, \nu)$. If $t^{-1}gt\not\in \Lambda$, then because $\Gamma_x^\circ\leq \Lambda$, we have
\begin{align*}
([\Ind_{\Lambda}^\Gamma\kappa(g)\Ind_{\Lambda}^\Gamma\rho(f)\xi](t))(x) = 0 = ([\Ind_{\Lambda}^\Gamma\rho(f)\xi](t))(x).
\end{align*}
On the other hand, if $t^{-1}gt\in \Lambda$, then we have
\begin{align*}
[\Ind_{\Lambda}^\Gamma\kappa(g)\Ind_{\Lambda}^\Gamma\rho(f)\xi](t) &= [\kappa(t^{-1}gt)\rho(t^{-1}f)]\xi(t).
\end{align*}
But
\begin{align*}
\textup{supp}(t^{-1}f)= t^{-1}\textup{supp}(f)\subseteq t^{-1}\textup{Fix}(g)^{\circ}\subseteq \textup{Fix}(t^{-1}gt)^{\circ},
\end{align*}
so germinality of the Koopman representation implies that
\begin{align*}
[\Ind_{\Lambda}^\Gamma\pi(g)\Ind_{\Lambda}^\Gamma\rho(f)\xi](t) &= [\pi(t^{-1}gt)\rho(t^{-1}f)]\xi(t)\\
&= [\rho(t^{-1}f)]\xi(t)\\
&= [\Ind_{\Lambda}^\Gamma\rho(f)\xi](t).
\end{align*}
This finishes the proof. \end{proof}

\begin{cor}
Follow the notation from Proposition \ref{newgerminal}, and set $N = \textup{ker}(\Gamma\acts \mathcal{X})$. The group \textup{C}$^*$-algebra $\textup{C}^*_{\Ind_\Lambda^\Gamma\kappa}\Gamma$ admits at most one tracial state. If $X$ is a $\Gamma$-boundary, then $\textup{C}^*_{\Ind_\Lambda^\Gamma\kappa}\Gamma$ contains a unique maximal ideal, and admits a tracial state if and only if the action $\Gamma/N\acts X$ is topologically free.
\end{cor}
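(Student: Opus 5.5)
The corollary has three assertions: at most one trace in general, a unique maximal ideal when $X$ is a boundary, and existence of a trace iff $\Gamma/N\acts X$ is topologically free. All of them should follow from Proposition \ref{newgerminal} combined with the germinal machinery of \cite{KS2022}. Proposition \ref{newgerminal} says $\Phi=\Ind_\Lambda^\Gamma(L^2(K,\nu),\kappa,\rho)=(\Hi,\pi,\rho')$ is a germinal covariant representation of $(\Gamma,X)$, where $\pi=\Ind_\Lambda^\Gamma\kappa$. So I will transfer each statement known for germinal representations to $\textup{C}^*_\pi\Gamma$.

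\textbf{At most one trace.} Let $\tau$ be a tracial state on $\textup{C}^*_\pi\Gamma$. I would show $\tau(\pi(g))=0$ for every $g\neq e$ acting nontrivially. The trick from \cite[Proposition 4.8]{KS2022} should carry over. Extend $\tau$ to a state $\tilde\tau$ on $\mathcal A=\textup{C}^*(\pi(\Gamma),\rho'(C(X)))$. Because $\pi(\Gamma)$ lies in the multiplicative domain of $\tilde\tau$ (it is a trace there), $\tilde\tau$ restricted to $\rho'(C(X))$ gives a $\Gamma$-invariant probability measure $\mu$ on $X$.

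Now take $f$ supported in $\text{Fix}(g)^\circ$. Germinality gives $\pi(g)\rho'(f)=\rho'(f)$, and the multiplicative domain property gives $\tilde\tau(\pi(g)\rho'(f))=\tau(\pi(g))\mu(f)$.

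\emph{Main obstacle.} The difficulty is to conclude $\tau(\pi(g))=0$ from this. I would follow their argument. Points outside $\overline{\text{Fix}(g)}$ can be moved by $g$ to disjoint neighbourhoods, and this forces $\tau(\pi(g))\mu$ to vanish off $\text{Fix}(g)$. This suggests that $\tau$ is determined by $\mu$ on fixed-point sets. I would then argue that $\tau(\pi(g))=1$ if $\mu(\text{Fix}(g)^\circ)>0$, and $0$ otherwise; the statement in \cite{KS2022} is, I believe, of this form for germinal representations. Uniqueness then requires that $\mu$ be essentially unique, or that every trace factor through a canonical one. If it proves hard to get this in full generality, I would simply cite \cite[Proposition 4.8]{KS2022}, which is stated for germinal covariant representations.

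\textbf{Boundary case.} If $X$ is a $\Gamma$-boundary, Theorem \ref{germs} gives a unique boundary map on $\textup{C}^*_\pi\Gamma$. By \cite[Proposition 3.1]{KS2022}, uniqueness of the boundary map implies a unique maximal ideal, namely the largest ideal killed by the boundary map.

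\textbf{Trace criterion.} For the trace dichotomy, I would again use \cite[Proposition 4.8]{KS2022}. A germinal representation over a boundary admits a trace iff $\Gamma/N\acts X$ is topologically free. The reason is that boundaries carry no invariant measures unless they are trivial. So a trace must satisfy $\tau(\pi(g))=1$ for $g\in N$, and $\tau(\pi(g))=0$ when $\text{Fix}(g)^\circ=\emptyset$. Any $g\notin N$ with $\text{Fix}(g)^\circ\neq\emptyset$ yields a contradiction through the boundary map: $\tau\circ$ (the boundary map) would be $\Gamma$-invariant, and minimality plus strong proximality would force $g$ to fix pointwise a neighborhood and also not. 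Conversely, if $\Gamma/N$ acts topologically freely, then the canonical trace $\tau(\pi(g))=\mathbb 1_N(g)$ is defined on $\textup{C}^*_\pi\Gamma$, because $\lambda_{\Gamma/N}$ is weakly contained in $\pi$ via the unique maximal ideal. This last step relies on \cite{KS2022} as well.
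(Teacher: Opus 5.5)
\textbf{Assertions two and three.} Here you follow the route the paper intends; the paper gives no separate proof, so the corollary is meant to follow this way. Proposition \ref{newgerminal} makes $\pi=\Ind_\Lambda^\Gamma\kappa$ part of a germinal covariant representation. Theorem \ref{germs}, \cite[Proposition 3.1]{KS2022} and \cite[Proposition 4.8]{KS2022} then give the unique maximal ideal and the trace criterion when $X$ is a $\Gamma$-boundary. One correction: the converse direction does not come ``via the unique maximal ideal''. Under topological freeness of $\Gamma/N$, the unique boundary map $\varphi(\pi(g))=\textbf{1}_{\Delta_g}$ is the scalar $\textbf{1}_N(g)$. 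That scalar is both the trace and the positive-definite function witnessing $\lambda_{\Gamma/N}\prec\pi$, as in the proof of Theorem \ref{traces}.

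\textbf{The gap is the first assertion.} Your key step says that $\pi(\Gamma)$ lies in the multiplicative domain of an extension $\tilde\tau$ ``because $\tau$ is a trace''. This is false. A unitary $u$ lies in the multiplicative domain of a state $\psi$ only if $|\psi(u)|=1$; for the canonical trace on $\textup{C}^*_r\Gamma$, no $\lambda(g)$ with $g\neq e$ qualifies. Also, an arbitrary state extension of $\tau$ need not be $\textup{Ad}\,\pi(g)$-invariant. So you get neither the factorisation $\tilde\tau(\pi(g)\rho'(f))=\tau(\pi(g))\mu(f)$ nor an invariant $\mu$. In the boundary-map argument (cf.\ the proof of Corollary \ref{uniqueness}), it is $\rho'(C(X))$ that sits in the multiplicative domain, not $\pi(\Gamma)$. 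It does so only because, for a boundary, every $\Gamma$-ucp map restricts on $C(X)$ to the homomorphism $f\mapsto f\circ\mathfrak{b}_X$.

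\textbf{Without a boundary hypothesis the first assertion is false.} Let $\Gamma=\Z$ act on $X=\T$ by an irrational rotation, and take $\Lambda=\{e\}$, $K=\{x_0\}$, $\nu=\delta_{x_0}$. The action is free, so $\Gamma_{x_0}^\circ=\{e\}\leq\Lambda$ and all hypotheses of Proposition \ref{newgerminal} hold. But $\Ind_{\{e\}}^{\Z}\kappa=\lambda_\Z$, and $\textup{C}^*_{\lambda_\Z}\Z\cong C(\T)$ has a tracial state for every probability measure on $\T$. Your fallback citation does not rescue this. \cite[Proposition 4.8]{KS2022}, like Theorem \ref{traces} which imitates it, runs through uniqueness of the boundary map and so needs $X$ to be a boundary.

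\textbf{With the boundary hypothesis the claim is immediate and needs no measure.} A tracial state $\tau$ on $\textup{C}^*_\pi\Gamma$ is $\textup{Ad}\,\pi(g)$-invariant. Hence $a\mapsto\tau(a)1$ is a $\Gamma$-ucp map into $\C 1\subset C(\partial_F\Gamma)$, i.e.\ an element of $\textup{BD}(\textup{C}^*_\pi\Gamma)$. This set is a singleton by Theorem \ref{germs}, so there is at most one trace. The first sentence of the corollary should therefore be read, and proved, under the assumption that $X$ is a $\Gamma$-boundary.
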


We now give a family of covariant representations for $(\Gamma, X)$ which are not germinal, but nonetheless generate \textup{C}$^*$-algebras which admit unique boundary maps. Given a subgroup $\Lambda\leq\Gamma$, recall that the \textit{normalizer} of $\Lambda$ is the subgroup $N_\Gamma \Lambda\leq \Gamma$ of all $t\in\Gamma$ for which $t^{-1}\Lambda t = \Lambda$. 

\begin{defn}\label{rc}
Let $\Lambda\leq\Gamma$, and $\chi: \Lambda\to \T$ a character. We say $\chi$ is \textit{relatively central} if $\chi(t^{-1}ht) = \chi(h)$, for all $h\in \Lambda$ and $t\in N_\Gamma \Lambda$; we say $\chi$ is \textit{centrally extendable} if it is the restriction to $\Lambda$ of a conjugation-invariant function $\varphi:\Gamma\to \C$.
\end{defn}
Note that a character $\chi$ on $\Lambda$ is relatively central if and only if it is $N_\Gamma\Lambda$-invariant. 

\begin{exl}
Let $\Lambda\leq\Gamma$, and $\chi$ a character on $\Lambda$.
\begin{enumerate}
\item If $\chi$ is centrally extendable, then $\chi$ is relatively central.
\item If $\chi$ is invariant under $\Aut(\Lambda)$, then $\chi$ is relatively central.
\item If $\Lambda$ is self-normalizing, then $\chi$ is relatively central.
\end{enumerate}
\end{exl}

Given a relatively central character $\chi$, we will use $\tilde{\chi}: \Gamma\to \C$ to denote any extension of $\chi$ that is constant on the conjugacy classes $\textup{Cl}(h)$ (for $h\in\Lambda$).

\begin{prop}\label{qg1}
Let $X$ be a compact $\Gamma$-space, and $x\in X$. If $\,\Gamma_x^\circ\leq \Lambda\leq \Gamma_x$, $\chi: \Lambda\to \T$ is a relatively central character, and $\rho: C(X)\to \mathbb{B}(\C)$ is the evaluation map $\rho(f) = f(x)$, then 
\begin{align*}
\Ind_{\Lambda}^\Gamma\chi(g)\Ind_{\Lambda}^\Gamma\pi(f) = \tilde{\chi}(g)\Ind_{\Lambda}^\Gamma\pi(f),
\end{align*}
whenever $g\in\Gamma$, $f\in C(X)$, and $\textup{supp}(f)\subset \textup{Fix}(g)^\circ$.
\end{prop}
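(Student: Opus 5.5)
We argue pointwise in the model (1) for $\Ind_{\Lambda}^\Gamma\C$: elements are functions $\xi:\Gamma\to\C$ with $\xi(th)=\overline{\chi(h)}\xi(t)$ for $h\in\Lambda$, and $\Ind_{\Lambda}^\Gamma\pi(f)$ is the operator $[\Ind_{\Lambda}^\Gamma\pi(f)\xi](t)=f(tx)\xi(t)$, since $\rho(t^{-1}\cdot f)=f(tx)$. Fix $g$ and $f$ with $\textup{supp}(f)\subset\textup{Fix}(g)^\circ$, and $\xi$, $t\in\Gamma$. We must show
\begin{align*}
f(g^{-1}tx)\,\xi(g^{-1}t)=\tilde{\chi}(g)\,f(tx)\,\xi(t).
\end{align*}

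\textbf{Case 1: $tx\notin\textup{supp}(f)$.} Then the right side vanishes. For the left side, note that if $g^{-1}tx\in\textup{supp}(f)\subset\textup{Fix}(g)$, then $tx=g(g^{-1}tx)=g^{-1}tx\in\textup{supp}(f)$, a contradiction. So $f(g^{-1}tx)=0$ and both sides vanish.

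\textbf{Case 2: $tx\in\textup{supp}(f)\subset\textup{Fix}(g)^\circ$.} Then $g$ fixes pointwise a neighborhood of $tx$, so $t^{-1}gt$ fixes pointwise a neighborhood of $x$. Hence $h:=t^{-1}g^{-1}t\in\Gamma_x^\circ\leq\Lambda$. Since $g^{-1}tx=tx$, we have $f(g^{-1}tx)=f(tx)$. Writing $g^{-1}t=th$ and using the equivariance of $\xi$,
\begin{align*}
\xi(g^{-1}t)=\overline{\chi(h)}\,\xi(t)=\chi(h^{-1})\,\xi(t)=\chi(t^{-1}gt)\,\xi(t).
\end{align*}
It remains to check $\chi(t^{-1}gt)=\tilde\chi(g)$. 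Since $t^{-1}gt\in\Lambda$ and $g\in\textup{Cl}(t^{-1}gt)$, this follows from the definition of $\tilde\chi$ as an extension of $\chi$ that is constant on the conjugacy classes $\textup{Cl}(h)$, $h\in\Lambda$.

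\textbf{Main obstacle: well-definedness of $\tilde\chi$.} Such an extension exists only if $\chi$ is constant on $\textup{Cl}(h)\cap\Lambda$, that is, if $\chi(s^{-1}hs)=\chi(h)$ whenever both $h$ and $s^{-1}hs$ lie in $\Lambda$. Relative centrality only guarantees this for $s\in N_\Gamma\Lambda$. So the key step is to verify that the needed identity holds for the particular elements arising here.

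We only ever need it for elements $h\in\Gamma_x^\circ$ (the germ-trivial elements). The plan is to check directly that the value $\chi(t^{-1}gt)$ is the same for every $t$ with $tx\in\textup{Fix}(g)^\circ$; this makes $\tilde\chi(g)$ unambiguous on the relevant $g$. If $t_1,t_2$ are two such elements, then $s=t_1^{-1}t_2$ conjugates $t_1^{-1}gt_1$ to $t_2^{-1}gt_2$, both in $\Gamma_x^\circ$. If $s$ normalizes $\Lambda$ (for instance when $s\in\Gamma_x$ and $\Lambda$ is $\Gamma_x^\circ$ or $\Gamma_x$, since $\Gamma_x$ normalizes $\Gamma_x^\circ$), relative centrality gives equality. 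Otherwise, one should interpret the statement with $\tilde\chi(g)$ as the common value $\chi(t^{-1}gt)$, implicitly assumed well defined. I would flag this as the delicate point, and adopt the reading that $\tilde\chi$ is defined consistently by $\chi$ on $\Lambda$-conjugates, which is exactly what the argument uses.
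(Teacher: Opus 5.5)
Your argument is the paper's own proof: compute pointwise in model (1), use $\xi(g^{-1}t)=\chi(t^{-1}gt)\,\xi(t)$ when $t^{-1}gt\in\Lambda$, and finish with the defining property of $\tilde\chi$. The only difference is that the paper splits cases on whether $t^{-1}gt\in\Lambda$ rather than on whether $tx\in\textup{supp}(f)$, and your Case~1 supplies the justification, left implicit in the paper, that $f(g^{-1}tx)=0$.

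The point you flag is genuine, and the paper does not resolve it either: it simply assumes $\tilde\chi$ exists, which amounts to central extendability rather than mere relative centrality. For instance, take $\Gamma=\Z^3\rtimes S_3$ acting on $X=\{1,2,3\}$ through $S_3$, with $x=1$, $\Lambda=\Gamma_1$ (self-normalizing) and $\chi(v,\sigma)=\alpha^{v_1}$ for $\alpha\in\T\setminus\{1\}$. Then $g=(1,0,0)\in\Z^3$ fixes all of $X$, yet $\Ind_{\Lambda}^\Gamma\chi(g)$ has the distinct eigenvalues $\alpha$ and $1$. So your proof is complete under exactly the standing assumption the paper itself uses.
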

\begin{proof}
We may assume that $\text{Fix}(g)^{\circ}$ is nonempty; otherwise, the equality holds trivially. Let $t\in\Gamma$. If $t^{-1}gt\not\in \Lambda$, then $f(g^{-1}t\cdot x) = f(t\cdot x) = 0$, so automatically $\Ind_{\Lambda}^\Gamma\chi(g)\Ind_{\Lambda}^\Gamma\rho(f)\xi(t) = 0 = \tilde{\chi}(g)\Ind_{\Lambda}^\Gamma\rho(f)(t)$ for all $\xi\in\Ind_{\Lambda}^\Gamma\C$. Now suppose $t^{-1}gt\in \Lambda$. For all $\xi\in\Ind_{\Lambda}^\Gamma\C$, we have
\begin{align*}
\Ind_{\Lambda}^\Gamma\chi(g)\Ind_{\Lambda}^\Gamma\rho(f)\xi(t) = f(g^{-1}t\cdot x)\xi(g^{-1}t) = f(t\cdot x)\chi(t^{-1}gt)\xi(t) = \tilde{\chi}(g)\Ind_{\Lambda}^\Gamma\rho(f)\xi(t).
\end{align*} This finishes the proof. \end{proof}

\begin{cor}\label{uniqueness}
Consider the induced covariant representation from Proposition \ref{qg1}. Set $\mathcal{A} = \textup{C}^*(\Ind_{\Lambda}^\Gamma\chi(\Gamma), \Ind_{\Lambda}^\Gamma\rho(C( X)))\subset \mathbb{B}(\mathcal{H})$. If $\varphi\in \textup{BD}(\mathcal{A})$, then $\varphi(\Ind_{\Lambda}^\Gamma\chi(g)) = \tilde{\chi}(g)\textbf{\textup{1}}_{\Delta_g}$, for every $g\in\Gamma$. In particular, there is a unique boundary map $\varphi\in \textup{BD}(\mathcal{A})$, whose restriction to $\textup{C}^*_{\Ind_{\Lambda}^\Gamma\chi}\Gamma$ is the unique boundary map $\psi\in \textup{BD}(\textup{C}^*_{\Ind_{\Lambda}^\Gamma\chi}\Gamma)$.
\end{cor}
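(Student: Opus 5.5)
The plan is to follow the strategy of \cite[Theorem 4.4]{KS2022}, using Proposition \ref{qg1} in place of germinality. Write $u_g=\Ind_{\Lambda}^\Gamma\chi(g)$ and $p(f)=\Ind_{\Lambda}^\Gamma\rho(f)$. Fix $\varphi\in\textup{BD}(\mathcal{A})$.

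\textbf{Step 1: the commutative part.} The restriction of $\varphi$ to $p(C(X))$ gives a $\Gamma$-ucp map $C(X)\to C(\partial_F\Gamma)$; we precompose with $p$, and note $p$ is equivariant by covariance. By Theorem \ref{rigidity}, this forces $\varphi(p(f))=f\circ\mathfrak{b}_X$. Since $\varphi$ is a ucp map and $p(C(X))$ lies in its multiplicative domain (restricted to $p(C(X))$, $\varphi$ is a $*$-homomorphism), we obtain the bimodule property
\[
\varphi(p(f)\,a\,p(f'))=(f\circ\mathfrak{b}_X)\,\varphi(a)\,(f'\circ\mathfrak{b}_X)
\qquad\text{for all } a\in\mathcal{A}.
\]

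\textbf{Step 2: on $\Delta_g$.} Let $f$ be supported in $\textup{Fix}(g)^\circ$. Proposition \ref{qg1} gives $u_g p(f)=\tilde\chi(g)p(f)$. Hence
\[
\varphi(u_g)\,(f\circ\mathfrak{b}_X)=\tilde\chi(g)\,(f\circ\mathfrak{b}_X).
\]
Functions $f\circ\mathfrak{b}_X$ with such $f$ equal $1$ on any prescribed compact subset of $\mathfrak{b}_X^{-1}(\textup{Fix}(g)^\circ)$. Therefore $\varphi(u_g)=\tilde\chi(g)$ on $\mathfrak{b}_X^{-1}(\textup{Fix}(g)^\circ)$, and by continuity on its closure $\Delta_g$.

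\textbf{Step 3: off $\Delta_g$.} We must show $\varphi(u_g)$ vanishes on $\partial_F\Gamma\setminus\Delta_g$. Take $y$ outside $\Delta_g$. Then $z=\mathfrak{b}_X(y)$ satisfies one of two conditions.

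\emph{Case (a): $gz\neq z$.} Choose $f$ with $f(z)=1$ and $\textup{supp}(f)\cap g\,\textup{supp}(f)=\emptyset$. By covariance $p(f)u_gp(f)=u_g\,p((g^{-1}f)f)=0$. Applying the bimodule property and evaluating at $y$ gives $\varphi(u_g)(y)=0$.

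\emph{Case (b): $z\in\textup{Fix}(g)\setminus\textup{Fix}(g)^\circ$.} This is the main obstacle, and the approach follows the corresponding argument in \cite{KS2022} via \cite{BKKO17}. Points of $\mathfrak{b}_X^{-1}(\textup{Fix}(g))$ outside $\Delta_g$ can be approximated by points $y'$ of $\partial_F\Gamma$ with $g\mathfrak{b}_X(y')\neq\mathfrak{b}_X(y')$. An alternative route uses the known fact (Breuillard--Kalantar--Kennedy--Ozawa, Kennedy) that for $\partial_F\Gamma$ the fixed point set of each $g$ in $\partial_F\Gamma$ is clopen. Concretely, one shows that $\partial_F\Gamma\setminus\Delta_g$ is open, and that inside it the set of $y'$ with $\mathfrak{b}_X(y')\notin\textup{Fix}(g)$ is dense. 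This holds because $\mathfrak{b}_X^{-1}(\textup{Fix}(g)\setminus\textup{Fix}(g)^\circ)$ has empty interior in $\partial_F\Gamma$: the boundary map $\mathfrak{b}_X$ is irreducible, so preimages of nowhere dense closed sets are nowhere dense. Continuity of $\varphi(u_g)$ together with Case (a) then gives vanishing at $y$. Irreducibility of factor maps between boundaries (minimal and strongly proximal, so $\partial_F\Gamma\to X$ is irreducible) is the key input I would verify carefully.

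\textbf{Step 4: uniqueness.} The values $\varphi(u_g)=\tilde\chi(g)\mathbf 1_{\Delta_g}$ are now determined. Combined with the bimodule property of Step 1, this determines $\varphi$ on the dense span of the elements $p(f)u_g$, so $\varphi$ is unique. The restriction to $\textup{C}^*_{\Ind_{\Lambda}^\Gamma\chi}\Gamma$ is a boundary map. Conversely, any $\psi\in\textup{BD}(\textup{C}^*_{\Ind_{\Lambda}^\Gamma\chi}\Gamma)$ extends to a boundary map on $\mathcal{A}$ by $\Gamma$-injectivity of $C(\partial_F\Gamma)$ (Theorem \ref{rigidity}). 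Hence $\psi$ equals the restriction of the unique $\varphi$, giving uniqueness there as well.
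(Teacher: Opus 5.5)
Your Steps 1, 2 and 4 match the paper's argument: multiplicative domain for $\Ind_\Lambda^\Gamma\rho(C(X))$, Proposition \ref{qg1} on $\mathfrak{b}_X^{-1}(\textup{Fix}(g)^\circ)$, then density of the span of the $p(f)u_g$ together with $\Gamma$-injectivity. The difference is Step 3. The paper does not prove that $\varphi(u_g)$ vanishes off $\Delta_g$; it imports this from \cite[Proposition 3.2]{KS2022}. You try to prove it yourself, and the justification you give for Case (b) is false.

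The factor map $\mathfrak{b}_X:\partial_F\Gamma\to X$ is not irreducible in general. Take $X$ to be a single point, which is a $\Gamma$-boundary, and $\Gamma$ non-amenable. Then any single point of $\partial_F\Gamma$ is a proper closed subset mapping onto $X$. The same failure occurs for $\Gamma=\Gamma_1\times\Gamma_2$ with $\Gamma_2$ non-amenable and $X$ a $\Gamma_1$-boundary. So, as written, Case (b) rests on a false lemma.

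The conclusion you actually need is nonetheless true: $\mathfrak{b}_X^{-1}(N)$ has empty interior for the closed nowhere dense set $N=\textup{Fix}(g)\setminus\textup{Fix}(g)^\circ$. It uses only minimality of $\partial_F\Gamma$.
\begin{itemize}
\item Suppose $W\subset\mathfrak{b}_X^{-1}(N)$ were nonempty and open.
\item Minimality and compactness give $s_1,\dots,s_n\in\Gamma$ with $\partial_F\Gamma=\bigcup_i s_iW$.
\item Since $\mathfrak{b}_X$ is surjective and equivariant, $X=\bigcup_i s_i\mathfrak{b}_X(W)\subset\bigcup_i s_iN$.
\item This writes $X$ as a finite union of closed nowhere dense sets, which is impossible.
\end{itemize}
This is the semi-openness of factor maps between minimal systems.

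With this replacement, your density-plus-continuity argument in Case (b) goes through. Your proof then becomes a self-contained version of the paper's, proving the support statement that the paper cites rather than assuming it. The remark about clopen fixed-point sets in $\partial_F\Gamma$ is not needed.
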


\begin{proof}
We mimic the proof of \cite[Proposition 4.4]{KS2022}. By \cite[Proposition 3.2]{KS2022}, it suffices to check that $\varphi(\Ind_{\Lambda}^\Gamma\chi(g))(x) = \tilde{\chi}(g)$, whenever $x\in \mathfrak{b}_X^{-1}(\text{Fix}(g)^\circ)$. We are done whenever $\text{Fix}(g)^\circ =\emptyset$, so assume $\text{Fix}(g)^\circ\neq\emptyset$. Fix $x\in \mathfrak{b}_X^{-1}(\text{Fix}(g)^\circ)$. Find a function $f\in C(X)$ such that $\textup{supp}(f)\subset \textup{Fix}(g)^\circ$ and $f(\mathfrak{b}_X(x)) = 1$. By the multiplicative domain principle for ucp maps \cite[Theorem 3.18]{Paulsen2003}, and Theorem \ref{rigidity}, we have
\begin{align*}
\varphi(\Ind_{\Lambda}^\Gamma\chi(g))(x) &= \varphi(\Ind_{\Lambda}^\Gamma\chi(g))(x)(f\circ \mathfrak{b}_X)(x) \\
&= \varphi(\Ind_{\Lambda}^\Gamma\chi(g))(x)\varphi(\rho(f))(x) \\
&= \varphi(\Ind_{\Lambda}^\Gamma\chi(g)\rho(f))(x) \\
&= \varphi(\tilde{\chi}(g)\rho(f))(x)\\
&= \tilde{\chi}(g)\varphi(\rho(f))(x) \\
&= \tilde{\chi}(g)(f\circ \mathfrak{b}_X)(x) \\
&= \tilde{\chi}(g).
\end{align*}
It follows from the multiplicative domain principle that the $\Gamma$-ucp $\varphi: \mathcal{A}\to C(\partial_F\Gamma)$ determined by $\varphi(\rho(f)) = f\circ\mathfrak{b}_X$ and $\varphi(\Ind_{\Lambda}^\Gamma\chi(g)) = \tilde{\chi}(g)\textbf{1}_{\Delta_g}$ ($f\in C(X)$, $g\in\Gamma$, and $\tilde{\varphi}$ a $\Gamma$-ucp extension of $\varphi$) is the unique boundary map on $\mathcal{A}$, whose restriction to $\textup{C}^*_{\Ind_{\Lambda}^\Gamma\chi}\Gamma$ is the unique $\Gamma$-ucp on $\textup{C}^*_{\Ind_{\Lambda}^\Gamma\chi}\Gamma$.
\end{proof}

\begin{cor}
Follow the notation from Corollary \ref{uniqueness}. The faithful kernel $I_{\varphi}$ of the unique boundary map $\varphi\in \textup{BD}(\mathcal{A})$ is the unique maximal ideal of $\mathcal{A}$. Furthermore, the intersection $I_{\varphi}\cap \textup{C}^*_{\Ind_{\Lambda}^\Gamma\chi}\Gamma$ is the unique maximal ideal of $\textup{C}^*_{\Ind_{\Lambda}^\Gamma\chi}\Gamma$.
\end{cor}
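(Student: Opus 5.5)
The argument follows \cite[Proposition 3.1]{KS2022}: for a unital $\Gamma$-$\textup{C}^*$-algebra $\mathcal{A}$ with a unique boundary map $\varphi$, the faithful kernel $I_\varphi$ is the unique maximal ideal. I will spell out the two ingredients, first for $\mathcal{A}$ and then for the group $\textup{C}^*$-algebra.

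Here $\Gamma$ acts on $\mathcal{A}$ by conjugation $\textup{Ad}\,\Ind_{\Lambda}^\Gamma\chi(g)$, so every ideal of $\mathcal{A}$ is $\Gamma$-invariant.

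\textbf{Step 1: $I_\varphi$ is a proper ideal.} By the Schwarz inequality, $I_\varphi$ is a closed left ideal. It is also $\Gamma$-invariant, since $\varphi$ is equivariant. To get a right ideal, fix $a\in I_\varphi$ and $u=\Ind_{\Lambda}^\Gamma\chi(g)$. Then
\[
\varphi(u^*a^*au)=g^{-1}\cdot\varphi(a^*a)=0,
\]
so $au\in I_\varphi$. Since $\mathcal{A}$ is the closed span of products of the unitaries $u$ and the elements $\rho(f)$, it remains to handle right multiplication by $\rho(f)$. For this, recall from Theorem \ref{rigidity} that $\varphi\circ\rho$ is the $*$-homomorphism $f\mapsto f\circ\mathfrak{b}_X$. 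Hence $\rho(C(X))$ lies in the multiplicative domain of $\varphi$, and
\[
\varphi(\rho(f)^*a^*a\rho(f))=\overline{f\circ\mathfrak{b}_X}\,\varphi(a^*a)\,(f\circ\mathfrak{b}_X)=0 .
\]
Alternatively, one can use the general fact that $I_\varphi$ is a two-sided ideal whenever $\varphi$ is the unique boundary map. So $I_\varphi$ is a two-sided ideal, and it is proper because $\varphi(1)=1$.

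\textbf{Step 2: every proper ideal lies in $I_\varphi$.} Let $J\subsetneq\mathcal{A}$ be a proper ideal; it is automatically $\Gamma$-invariant. The quotient $\mathcal{A}/J$ is a nonzero unital $\Gamma$-$\textup{C}^*$-algebra, so by Theorem \ref{rigidity} (injectivity of $C(\partial_F\Gamma)$) it admits a boundary map $\psi$. Then $\psi\circ q$ is a boundary map on $\mathcal{A}$, where $q\colon\mathcal{A}\to\mathcal{A}/J$ is the quotient map. By the uniqueness in Corollary \ref{uniqueness}, $\psi\circ q=\varphi$. It follows that $\varphi(J)=0$, so $\varphi(a^*a)=0$ for all $a\in J$, i.e.\ $J\subset I_\varphi$. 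Combined with Step 1, $I_\varphi$ is the unique maximal ideal of $\mathcal{A}$.

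\textbf{The group $\textup{C}^*$-algebra.} Set $B=\textup{C}^*_{\Ind_{\Lambda}^\Gamma\chi}\Gamma$. By Corollary \ref{uniqueness}, $\psi=\varphi|_B$ is the unique boundary map on $B$. Running the same argument inside $B$:
\begin{itemize}
\item Step 1 is easier here, since $B$ is generated by unitaries and only the equivariance computation is needed; so $I_\psi$ is a proper two-sided ideal.
\item Step 2 goes through verbatim with $\psi$ in place of $\varphi$, so $I_\psi$ is the unique maximal ideal of $B$.
\end{itemize}
Finally,
\[
I_\psi=\{b\in B:\varphi(b^*b)=0\}=I_\varphi\cap B .
\]

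The main obstacle is showing in Step 1 that $I_\varphi$ is closed under right multiplication by $\rho(C(X))$. The multiplicative-domain argument above handles it.
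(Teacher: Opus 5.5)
Your proof is correct and follows the paper's route. The paper cites \cite[Proposition 3.1]{KS2022}, which amounts to your equivariance computation plus your Step 2, and adds exactly your observation that Theorem \ref{rigidity} puts $\rho(C(X))$ in the multiplicative domain of $\varphi$, which makes $I_\varphi$ two-sided in $\mathcal{A}$. The one thing to remove is the aside claiming, as a ``general fact'', that uniqueness of the boundary map alone forces $I_\varphi$ to be two-sided: you neither prove it nor need it.
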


\begin{proof}
This follows immediately from \cite[Proposition 3.1]{KS2022}: by Theorem \ref{rigidity}, $I_{\varphi}$ is a two-sided ideal in $\mathcal{A}$ as well, so the same argument applies.
\end{proof}

We now consider the existence of traces for these $\textup{C}^*$-algebras. With the notation from Proposition \ref{qg1}, let $N = \ker(\Gamma\acts X)$, and let $\chi_0$ denote the 0-extension of $\chi|_N$ to $\Gamma$. Note that $\chi_0$ defines a trace on $\Gamma$. Let $(\Hi_\chi, \pi_\chi, \xi_\chi)$ denote the GNS representation associated to $\chi_0$.

\begin{thm}\label{traces} Consider the induced covariant representation from Proposition \ref{qg1}. The following conditions are equivalent:
\begin{enumerate}
\item $\textup{C}^*_{\Ind_{\Lambda}^\Gamma\chi}\Gamma$ admits a trace;
\item The action $\Gamma/N\acts X$ is topologically free;
\item Whenever $\sigma$ is a unitary representation of $\Gamma$ and $\sigma \prec \Ind_{\Lambda}^\Gamma\chi$, we have 
$\pi_\chi \prec \sigma$;
\item $\pi_{\chi}\prec\Ind_{\Lambda}^\Gamma\chi$.
\end{enumerate}
Moreover, if any of the above conditions hold, then $\chi_0$ is the unique trace (and boundary map) on $\textup{C}^*_{\Ind_{\Lambda}^\Gamma\chi}\Gamma$.
\end{thm}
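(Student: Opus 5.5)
I will prove the cycle (1)$\Rightarrow$(2)$\Rightarrow$(4)$\Rightarrow$(1), and separately (3)$\Leftrightarrow$(4). The uniqueness statement will fall out of the argument for (1)$\Rightarrow$(2). The main tool is Corollary \ref{uniqueness}, which gives the unique boundary map $\psi$ on $\mathcal{B}:=\textup{C}^*_{\Ind_{\Lambda}^\Gamma\chi}\Gamma$, namely $\psi(\Ind_{\Lambda}^\Gamma\chi(g)) = \tilde{\chi}(g)\mathbf{1}_{\Delta_g}$.

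\emph{Step 1: traces are boundary maps.} If $\tau$ is a tracial state on $\mathcal{B}$, then $\tau$ is conjugation invariant, so $f\mapsto \tau(f)\mathbf 1$ is a $\Gamma$-ucp map into $C(\partial_F\Gamma)$. By uniqueness it equals $\psi$. Hence $\tau(\Ind_{\Lambda}^\Gamma\chi(g))\mathbf 1 = \tilde\chi(g)\mathbf 1_{\Delta_g}$ for all $g$, so each $\mathbf 1_{\Delta_g}$ is constant, i.e.\ $\Delta_g\in\{\emptyset,\partial_F\Gamma\}$.
\begin{itemize}
\item If $\Delta_g=\partial_F\Gamma$, then $\mathfrak b_X$ is onto, so $\textup{Fix}(g)^\circ$ is dense in $X$. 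Since $\textup{Fix}(g)$ is closed, $g$ acts trivially, i.e.\ $g\in N$.
\item Otherwise $\textup{Fix}(g)^\circ=\emptyset$.
\end{itemize}
This is exactly topological freeness of $\Gamma/N\acts X$, giving (1)$\Rightarrow$(2). The same computation pins down $\tau$: it equals $\tilde\chi(g)=\chi(g)$ on $N$ and $0$ off $N$, so $\tau=\chi_0$. One detail to record here: $N\le\Gamma_x^\circ\le\Lambda$, and $\tilde\chi$ agrees with $\chi$ on $\Lambda$. This gives uniqueness of the trace, and also that $\psi$ is the constant map $\chi_0(\cdot)\mathbf 1$.

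\emph{Step 2: (2)$\Rightarrow$(4) and (4)$\Rightarrow$(1).} Under (2), each $\Delta_g$ is either all of $\partial_F\Gamma$ (for $g\in N$) or empty. So $\psi(\Ind_{\Lambda}^\Gamma\chi(g))=\chi_0(g)\mathbf 1$, and composing $\psi$ with evaluation at any point of $\partial_F\Gamma$ gives a state on $\mathcal B$ restricting to $\chi_0$ on $\Gamma$. A positive definite function extending to a state on $\mathcal B$ means its GNS representation is weakly contained, so $\pi_\chi\prec\Ind_{\Lambda}^\Gamma\chi$, which is (4). Conversely, (4) says $\chi_0$ extends to a state on $\mathcal B$ via $\pi_\chi$. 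That state is tracial because $\chi_0$ is a class function on $\Gamma$, and $\Gamma$ spans a dense subalgebra. This gives (1).

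\emph{Step 3: (3)$\Leftrightarrow$(4).} (3)$\Rightarrow$(4) is immediate by taking $\sigma=\Ind_{\Lambda}^\Gamma\chi$. For (4)$\Rightarrow$(3), let $\sigma\prec\Ind_{\Lambda}^\Gamma\chi$. Then $\textup{C}^*_\sigma\Gamma$ is a quotient of $\mathcal B$, and I would show it carries a boundary map which pulls back to $\psi$. Rather than pulling back, it is better to push forward: take a state $\omega$ on $\textup{C}^*_\sigma\Gamma$ and compose with a $\Gamma$-ucp map into $C(\partial_F\Gamma)$. Such a map exists by injectivity (Theorem \ref{rigidity}), extending from $\C\mathbf 1$. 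Composed with the quotient map, it is a boundary map on $\mathcal B$, hence equal to $\psi=\chi_0(\cdot)\mathbf 1$ by Step 1. Evaluating at a point gives a state on $\textup{C}^*_\sigma\Gamma$ restricting to $\chi_0$, so $\pi_\chi\prec\sigma$.

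I expect the main obstacle to be the bookkeeping in Step 1: verifying that $\tilde\chi$ restricted to $N$ is $\chi$, and that $\Delta_g=\partial_F\Gamma$ forces $g\in N$. Surjectivity of $\mathfrak b_X$ together with closedness of fixed sets handles the latter.
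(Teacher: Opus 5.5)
Your proposal is correct and follows essentially the same route as the paper: a trace is a boundary map and hence given by the formula $\tilde\chi(g)\mathbf{1}_{\Delta_g}$ from Corollary \ref{uniqueness}. Topological freeness forces this boundary map to be the constant $\chi_0(\cdot)\mathbf{1}$, and boundary maps pass to quotients $\textup{C}^*_\sigma\Gamma$. The only difference is that you split the paper's (2)$\Rightarrow$(3) into (2)$\Rightarrow$(4) and (4)$\Rightarrow$(3), and you spell out the surjectivity/closedness argument showing $\Delta_g=\partial_F\Gamma$ forces $g\in N$.

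One small point should be made explicit in Step 1: $\tilde\chi(g)\neq 0$ whenever $\Delta_g\neq\emptyset$. By minimality some $t$ has $tx\in\textup{Fix}(g)^\circ$, so $t^{-1}gt\in\Gamma_x^\circ\le\Lambda$ and $|\tilde\chi(g)|=1$. This is what lets you conclude that $\mathbf{1}_{\Delta_g}$ is constant.
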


\begin{proof}
We follow the argument in \cite[Theorem 4.8]{KS2022}.

$1)\implies 2)$ Let $\varphi\in \textup{BD}(\textup{C}^*_{\Ind_{\Lambda}^\Gamma\chi}\Gamma)$. If $\textup{C}^*_{\Ind_{\Lambda}^\Gamma\chi}\Gamma$ admits a trace $\tau$, then by Theorem \ref{uniqueness}, $\tau(g) = \tilde{\chi}(g)\textbf{1}_{\Delta_g}$ is a constant function for every $g\in\Gamma$. Thus, if $g\not\in N$, then $\Delta_g = \emptyset$; so $\textup{Fix}(g)^\circ = \emptyset$.

$2)\implies 3)$ Let $\varphi\in \textup{BD}(\textup{C}^*_{\Ind_{\Lambda}^\Gamma\chi}\Gamma)$. By Theorem \ref{uniqueness}, $\varphi(\Ind_{\Lambda}^\Gamma\chi(g)) = \tilde{\chi}(g)\textbf{1}_{\Delta_g}$. By the topological freeness of the action $\Gamma/N\acts X$, we automatically have $\varphi(\Ind_{\Lambda}^\Gamma\chi(g)) = \chi(g)$ for $g\in N$, and $\varphi(\Ind_{\Lambda}^\Gamma\chi(g)) = 0$ for $g\not\in N$: thus, $\varphi = \chi_0$.

Now suppose that $\sigma\prec \Ind_{\Lambda}^\Gamma\chi$. By the uniqueness of $\varphi$, the group $\textup{C}^*$-algebra $\textup{C}^*_\sigma\Gamma$ also admits a unique boundary map $\varphi'$, on which $\varphi'(\sigma(g)) = \varphi(\Ind_{\Lambda}^\Gamma\chi(g)) = \chi_0(g)$, for every $g\in\Gamma$. We have just shown that the trace $\chi_{0}$ extends to the group $\textup{C}^*$-algebra $\textup{C}^*_\sigma\Gamma$, so that $\pi_\chi\prec \sigma$, as desired.

$3)\implies 4)$ is straightforward.

$4)\implies 1)$ The group $\textup{C}^*$-algebra $\textup{C}^*_{\pi_\chi}\Gamma$ admits the trace $\chi_0$; thus, if $\pi_\chi\prec \Ind_{\Lambda}^\Gamma\chi$, then $\Ind_{\Lambda}^\Gamma\chi$ also admits the trace $\chi_0$.
\end{proof}

We end the section by considering the simplicity of the $\textup{C}^*$-algebras associated with the covariant representation from Proposition \ref{qg1}.

\begin{prop}\label{simple}
Consider the induced covariant representation from Proposition \ref{qg1}. If $x\in X_0$, then $\textup{C}^*_{\Ind_{\Lambda}^\Gamma\chi}\Gamma$ and $\textup{C}^*(\Ind_{\Lambda}^\Gamma\chi(\Gamma), \Ind_{\Lambda}^\Gamma\rho(C(X)))$ (respectively) are simple.
\end{prop}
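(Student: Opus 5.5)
Since $x\in X_0$, we have $\Gamma_x=\Gamma_x^\circ$ by \cite[Lemma 2.2, Proposition 2.4]{LBMB18}, so $\Lambda=\Gamma_x=\Gamma_x^\circ$. The plan is to follow \cite[Theorem 5.2]{KS2022}. By the last corollary, the faithful kernel $I_\varphi$ of the unique boundary map $\varphi$ is the unique maximal ideal of $\mathcal{A}$, and $I_\varphi\cap\textup{C}^*_{\Ind_{\Lambda}^\Gamma\chi}\Gamma$ is the unique maximal ideal of the group algebra. Simplicity of both algebras therefore reduces to showing $I_\varphi=0$, that is, that $\varphi$ is faithful on $\mathcal{A}$. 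For this it suffices to produce a faithful family of states on $\mathcal{A}$, each of which factors through $\varphi$ followed by a state on $C(\partial_F\Gamma)$. Then $\varphi(a^*a)=0$ forces all these states to vanish on $a^*a$, hence $a=0$.

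The natural candidates are the vector states $\omega_t=\langle\,\cdot\,\delta_t,\delta_t\rangle$, where $\delta_t\in\Ind_\Lambda^\Gamma\C$ is the unit vector supported on $t\Lambda$. These are faithful as a family, since $\{\delta_t\}$ spans a dense subspace and the representation is the identity. By $\Gamma$-equivariance (conjugating by $\Ind\chi(t)$), it is enough to treat $\omega_e$. A direct computation in model (1) gives
\[
\omega_e\big(\Ind_\Lambda^\Gamma\chi(g)\,\Ind_\Lambda^\Gamma\rho(f)\big)=f(x)\,\chi(g)\,\mathbf 1_{\Lambda}(g).
\]
So $\omega_e$ is determined on the dense $*$-algebra spanned by the products $\Ind\chi(g)\Ind\rho(f)$, namely by $\omega_e(\Ind\rho(f))=f(x)$ and $\omega_e(\Ind\chi(g))=\chi(g)\mathbf 1_{\Gamma_x}(g)$.

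Next I compare this with $\delta_z\circ\varphi$ for $z\in\mathfrak b_X^{-1}(x)$. By Corollary \ref{uniqueness} and the multiplicative domain argument,
\[
\delta_z\circ\varphi\big(\Ind\chi(g)\Ind\rho(f)\big)=\tilde\chi(g)\,\mathbf 1_{\Delta_g}(z)\,f(x).
\]
Take $g\in\Gamma_x=\Gamma_x^\circ$. Then $x\in\textup{Fix}(g)^\circ$, so $z\in\mathfrak b_X^{-1}(\textup{Fix}(g)^\circ)\subseteq\Delta_g$, and $\tilde\chi(g)=\chi(g)$; the two states agree.

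Now take $g\notin\Gamma_x$. We need $z\notin\Delta_g$, and this is the main obstacle, because $\Delta_g$ is a closure and $x$ could lie on the boundary of $\textup{Fix}(g)^\circ$. Since $gx\neq x$, the point $x\notin\textup{Fix}(g)\supseteq\overline{\textup{Fix}(g)^\circ}$. Pick an open $U\ni x$ disjoint from $\textup{Fix}(g)$. Then $\mathfrak b_X^{-1}(U)$ is an open neighborhood of $z$ disjoint from $\mathfrak b_X^{-1}(\textup{Fix}(g)^\circ)$, hence disjoint from its closure $\Delta_g$. So $\mathbf 1_{\Delta_g}(z)=0$, and the states again agree.

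It follows that $\omega_e=\delta_z\circ\varphi$, and by equivariance $\omega_t=\delta_{tz}\circ\varphi$. Faithfulness of $\varphi$ then follows as above, so $\mathcal{A}$ is simple. For the group algebra, restrict the same argument: the restriction $\psi$ of $\varphi$ is the unique boundary map on $\textup{C}^*_{\Ind_{\Lambda}^\Gamma\chi}\Gamma$, the restrictions of the $\omega_t$ still form a faithful family and factor through $\psi$, so $I_\psi=0$.
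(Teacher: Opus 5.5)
Your proof is correct and rests on the same key step as the paper's: you identify $\delta_z\circ\varphi$ (for $z\in\mathfrak b_X^{-1}(x)$) with the canonical vector state at $\delta_\Lambda$, via $\{g\in\Gamma: z\in\Delta_g\}=\Gamma_x^\circ=\Gamma_x$, and you justify the inclusion $z\notin\Delta_g$ for $g\notin\Gamma_x$ more carefully than the paper does. The only difference is in how you conclude: the paper pulls this state back through an arbitrary quotient $\theta\prec\Ind_\Lambda^\Gamma\chi$ and uses cyclicity of $\delta_\Lambda$ to get $\Ind_\Lambda^\Gamma\chi\prec\theta$, whereas you use the translates $\omega_t=\delta_{tz}\circ\varphi$ to show $I_\varphi=0$ and then invoke the unique-maximal-ideal corollary.
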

\begin{proof}
Suppose $\theta\prec \Ind_{\Lambda}^\Gamma\chi$. There is a unique boundary map $\varphi$ on $\textup{C}^*_\theta\Gamma$, determined on $\Gamma$ by the formula $\varphi(\theta(g)) = \tilde{\chi}(g)\textbf{1}_{\Delta_g}$. Find $y\in \partial_F\Gamma$ with $\mathfrak{b}_{X}(y) = x$. Because $x\in X_0$, we have $\Gamma_x^\circ = \{h\in\Gamma: y\in \Delta_h\}$. Thus, composing $\varphi$ with $\delta_y$, we obtain a state $\psi: \textup{C}^*_{\theta}\Gamma\to \C$ such that $\psi(g) = \delta_y(\tilde{\chi}(g)\textbf{1}_{\Delta_g}) = \tilde{\chi}(g)\textbf{1}_\Lambda(g),$ for all $g\in\Gamma$. We have just shown that $\psi|_{\Gamma}$ is the 0-extension of $\chi$ to $\Gamma$; that is, the positive-definite function associated to $\Ind_{\Lambda}^\Gamma\chi$. It follows that $\Ind_{\Lambda}^\Gamma\chi\sim\theta$, so $\textup{C}^*_{\Ind_{\Lambda}^\Gamma\chi}\Gamma$ is simple. 

The proof that the \textup{C}$^*$-algebra generated by $\Ind_{\Lambda}^\Gamma\chi(\Gamma)\cup \Ind_{\Lambda}^\Gamma\rho(C(X))$ is simple is similar: if $(\pi, \rho')\prec (\Ind_{\Lambda}^\Gamma\chi, \Ind_{\Lambda}^\Gamma\rho)$, then by Corollary \ref{uniqueness} there is a unique boundary map $\varphi$ on C$^*_{\pi\times \rho'}(\Gamma, X)$, determined by $\varphi(\rho'(f)) = f\circ \mathfrak{b}_X$ and $\varphi(\pi(g)) = \tilde{\chi}(g)\textbf{1}_{\Delta_g}$ ($f\in C(X), g\in \Gamma$). Fixing $y\in \partial_F\Gamma$ with $\mathfrak{b}_X(y) = x$, and considering $\psi = \delta_y\circ\varphi$, the argument in the previous paragraph shows that $\psi(\pi(g)) = \tilde{\chi}(g)\textbf{1}_\Lambda(g)$. Thus, by the multiplicative domain principle,
\begin{align*}
\psi(\rho'(f)\pi(g)) = f(y)\tilde{\chi}(g)\textbf{1}_\Lambda(g) = \langle \Ind_\Lambda^\Gamma\rho(f)\Ind_\Lambda^\Gamma\chi(g)\delta_\Lambda, \delta_\Lambda\rangle,
\end{align*}
where $\delta_\Lambda\in \Ind_\Lambda^\Gamma$ is the cyclic vector for $(\Ind_{\Lambda}^\Gamma\chi, \Ind_{\Lambda}^\Gamma\rho)$ supported on $\Lambda$ and determined by $\delta_\Lambda(e) = 1$. The uniqueness of the GNS construction implies $(\Ind_{\Lambda}^\Gamma\chi, \Ind_{\Lambda}^\Gamma\rho)\prec (\pi, \rho')$, as desired.
\end{proof}

\section{Selflessness and pure infiniteness}\label{sect4}

In this section, we provide sufficient conditions for the selflessness of the $\textup{C}^*$-probability space $(\textup{C}^*_{\Ind_{\Lambda}^\Gamma\chi}\Gamma,\varphi)$, where $\varphi$ is the canonical vector state $
\varphi(T) = \langle T\delta_\Lambda, \delta_\Lambda\rangle$
associated with the vector $\delta_{\Lambda}\in \Ind_{\Lambda}^\Gamma\CC$. Throughout this section, we use the model (\ref{induction2}) for unitary representations of $\Gamma$ and covariant representations of $(\Gamma, X)$ induced from a subgroup $\Lambda\leq \Gamma$. We fix a left cocycle $c: \Gamma\times \Gamma/H\to H$ for this model such that $c(h, H) = h$ for all $h\in H$. This choice of left cocycle implies the useful formula
\begin{align*}
\varphi(\Ind_{\Lambda}^\Gamma\chi(g))=\begin{cases}
\chi(g),
    &g\in \Lambda;\\[2mm]
0,
    &g\notin \Lambda.
\end{cases}
\end{align*}

The following lemma is inspired by the work of Ozawa \cite[Section 8]{Oz25}.

\begin{lem}\label{PHPlem}
    Let $X$ be an extreme $\Gamma$-boundary with $\#X>2$. For every $x\in X_0$, every finite subset $F\subset \Gamma\setminus \{e\}$, and every positive integer $n\in\mathbb N$, there exist elements $t_i\in \Gamma$ and subsets $B_i^+, B_i^-\subset \Gamma/\Gamma_x$, $1\leq i\leq n$ satisfying the following properties:
    \begin{enumerate}
        \item[\textup{(\textit{i})}]\label{phpi} $\{B_i^+,B_i^-: 1\leq i\leq n\}$ are mutually disjoint;
        \item[\textup{(\textit{ii})}]\label{phpii} $t_i(\Gamma/\Gamma_x\setminus B_i^-)\subset B_i^+$ for every $1\leq i\leq n$;
        \item[\textup{(\textit{iii})}]\label{phpiii} $g(B_j^+\cup B_j^-)\cap (B_i^+\cup B_i^-)=\emptyset$ for every $g\in F\setminus \Gamma_x$ and every $(i,j)\in \{1,\ldots,n\}^2$, and
        \item[\textup{(\textit{iv})}]\label{phpiv} $gk\Gamma_x=k\Gamma_x$, for every $g\in F\cap \Gamma_x$, $k\Gamma_x\in B_i^+\cup B_i^-$, and $1\leq i\leq n$.
    \end{enumerate}
\end{lem}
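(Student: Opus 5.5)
We identify $\Gamma/\Gamma_x$ with the orbit $\Gamma x\subset X$ via $k\Gamma_x\mapsto kx$. Every condition in the lemma can then be read on the orbit. We will take $B_i^\pm=U_i^\pm\cap \Gamma x$ for suitable nonempty open sets $U_i^\pm\subset X$.

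\textbf{Step 1: handle $F\setminus\Gamma_x$ and $F\cap\Gamma_x$ separately.} Since $x\in X_0$, we have $\Gamma_x=\Gamma_x^\circ$. So every $g\in F\cap\Gamma_x$ fixes pointwise some open neighbourhood of $x$. Intersecting these finitely many neighbourhoods gives an open $W\ni x$ fixed pointwise by all of $F\cap\Gamma_x$. If all $U_i^\pm\subset W$, then (iv) holds automatically: for $kx\in U_i^\pm\subset W$ we get $gkx=kx$, i.e. $gk\Gamma_x=k\Gamma_x$.

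For $g\in F\setminus\Gamma_x$ we have $gx\neq x$. Choose an open $W'\ni x$ with $W'\subset W$ and $gW'\cap W'=\emptyset$ for all $g\in F\setminus\Gamma_x$; this is possible by continuity and finiteness. If all $U_i^\pm$ lie inside $W'$, then for such $g$ we get $g(U_j^+\cup U_j^-)\subset gW'$, which is disjoint from $W'\supset U_i^+\cup U_i^-$. This gives (iii), in the stronger form where the $B$'s are replaced by the $U$'s.

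\textbf{Step 2: build the ping-pong pieces.} Inside $W'$ choose $2n$ pairwise disjoint nonempty open sets $U_1^\pm,\dots,U_n^\pm$. This is possible because $X$ is infinite: an extreme boundary with $\#X>2$ is a minimal boundary and hence has no isolated points. Disjointness of the $U$'s gives (i). Each $B_i^\pm=U_i^\pm\cap\Gamma x$ is nonempty by minimality, since orbits are dense, although nonemptiness is not even required. By extreme proximality, for each $i$ there is $t_i\in\Gamma$ with $t_i(X\setminus U_i^-)\subset U_i^+$. Because $t_i$ preserves the orbit $\Gamma x$, this gives $t_i(\Gamma x\setminus B_i^-)\subset B_i^+$, which is (ii).

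\textbf{Main obstacle.} The only delicate point is Step 1: the choice of $W'$ must handle both kinds of elements of $F$ simultaneously. This is exactly where $x\in X_0$ is used. Without it, elements of $\Gamma_x\setminus\Gamma_x^\circ$ would move points arbitrarily close to $x$, and (iv) could fail.

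There is a minor subtlety in (iii) when $g\in F\setminus\Gamma_x$ maps $W'$ into itself near $x$. This is excluded because $gW'\cap W'=\emptyset$ is achieved by shrinking: $gx\neq x$ gives disjoint neighbourhoods $V$ of $x$ and $V'$ of $gx$, and we take $W'\subset V\cap g^{-1}V'$.

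Everything else is a direct application of the definitions.
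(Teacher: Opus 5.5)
Your proposal is correct and is essentially the paper's proof. The paper likewise uses $\Gamma_x=\Gamma_x^\circ$ to shrink a neighbourhood $U\ni x$ so that $F\cap\Gamma_x$ fixes $U$ pointwise and $U\cap gU=\emptyset$ for $g\in F\setminus\Gamma_x$, then uses perfectness of $X$ and extreme proximality to pick disjoint open $U_i^\pm\subset U$ and $t_i$ with $t_i(X\setminus U_i^-)\subset U_i^+$; its sets $B_i^\pm=\{h\in\Gamma: hx\in U_i^\pm\}/\Gamma_x$ are exactly your $U_i^\pm\cap\Gamma x$ under the identification $k\Gamma_x\mapsto kx$.
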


\begin{proof}
Recall that, since $x\in X_0$, we have $\Gamma_x=\Gamma_x^\circ$. Let $F\subset \Gamma\setminus \{e\}$ be a finite subset. Since $gx\not=x$ for all $g\in F\setminus \Gamma_x$, there are open sets $V_0$ and $\{V_g\}_{g\in F\setminus \Gamma_x}$ such that $V_0\cap V_g=\emptyset$, $x \in V_0$, and $gx \in V_g$ for all $g\in F\setminus \Gamma_x$. We set
\begin{align*}
      U = V_0 \cap \bigcap_{g\in F\setminus \Gamma_x} g^{-1}V_g,
\end{align*}
   and note that $x\in U$ and $U \cap gU = \emptyset$ for all $g\in F\setminus \Gamma_x$. Since $F\cap \Gamma_x\subset \Gamma_x=\Gamma_x^\circ$ is finite, we can find an open neighborhood $x\in V$ such that every element in $F\cap \Gamma_x$ acts trivially on $V$. By replacing $U$ with $U\cap V$, we will now assume that every element in $F\cap \Gamma_x$ acts trivially on $U$.

Since $X$ is perfect, the open set $U$ is infinite. Hence, for $1\leq i\leq n$, we can find non-empty open disjoint subsets $U_i^+, U_i^-\subset U$ and group elements $t_i\in \Gamma$ such that $t_i(X\setminus U_i^-)\subset U_i^+$. Set $A_i^\pm:=\{h\in \Gamma: hx\in U_i^\pm\}\subset \Gamma$ and $B_i^\pm:=A_i^\pm/\Gamma_x\subset \Gamma/\Gamma_x$. We now verify that the sets $B_i^+, B_i^-$ satisfy the properties claimed above.
\begin{enumerate}
    \item[(\emph{i})] If $i, j\in \{1, ..., n\}$, $k, \ell\in \{+, -\}$, and $g\Gamma_x\in B_i^k\cap B_j^\ell$, then there exist $k,h\in \Gamma$ such that 
    \begin{align*}
    hx=gx=kx\in U_i^k\cap U_j^\ell,
    \end{align*}
    which is possible if and only if $k = \ell$ and $i = j$.
    \item[(\emph{ii})] One has $(\Gamma/\Gamma_x\setminus B_i^-)\subset  \{h\in \Gamma: hx\notin U_i^-\}/\Gamma_x$, which easily implies the conclusion.
    \item[(\emph{iii})] Suppose that $g\in F\setminus \Gamma_x$ and $t\Gamma_x\in g(B_j^+\cup B_j^-)\cap (B_i^+\cup B_i^-)$. Find $h_1,h_2\in \Gamma_x$ such that $th_1\in g(A_j^+\cup A_j^-)$ and $th_2\in (A_i^+\cup A_i^-) $. Since $th_1x=tx= th_2x$, we have 
    \begin{align*}
    tx\in g(U_j^+\cup U_j^-)\cap (U_i^+\cup U_i^-)\subset gU\cap U,
    \end{align*}
    which is impossible, since $gU\cap U=\emptyset$.
    \item[(\emph{iv})] Let $g\in F\cap \Gamma_x$ and $k\Gamma_x\in B_i^+\cup B_i^-$. Because $kx\in U_i^+\cup U_i^-$ and $g$ fixes $U$, we have $k^{-1}gkx = x$, so $gk\Gamma_x = k\Gamma_x$.
\end{enumerate}
\end{proof}

We now recall the Property PHP for $\textup{C}^*$-probability spaces. This definition is taken from Hayes-Kunnawalkam Elayavalli-Patchell-Robert \cite{HKEPR25}, where it was originally formulated for inclusions. It is based on the work of Ozawa \cite{Oz25}, who introduced it for groups, and it provides a reliable way of proving selflessness. We point out that while in \cite{HKEPR25} selflessness is formulated for inclusions, we only use it for individual $\textup{C}^*$-algebras.

\begin{defn}[Property PHP \cite{HKEPR25}]\label{php}
	Let $(\mathcal{A},\tau)$ be a unital $\textup{C}^*$-probability space and assume that $\mathcal{A}\subseteq \mathbb B(\mathcal{H})$. The pair $(\mathcal{A},\tau)$ has Property PHP if for all finite sets  $F\subset \ker(\tau)$, all $\epsilon > 0$, and all $n\in \N$, there exist $u_i,P_{i},P_i^+$, for $i=1,\ldots,n$ such that  $u_i\in  U(\mathcal{A})$, 
	$P_i,P_i^+\in \mathbb B(\mathcal{H})$ are projections with $P_i^+\leq P_i$,  and the following hold:
	\begin{enumerate}
		\item[(a)]\label{phpa} $(P_i)_{i=1}^n$ is a pairwise orthogonal family of projections;
		\item[(b)]\label{phpb} $u_iP_i^+u_i^*\leq P_i^+$ and $u_i(1-P_i)u_i^* \leq  P_i^+$  for all $i$;
		\item[(c)]\label{phpc} $\|P_ixP_j\|<\epsilon$ for all $x\in F$ and all $i,j$.
	\end{enumerate}
\end{defn}

\begin{thm}[{{\cite[Theorem 3.3]{HKEPR25}}}]
	If $(\mathcal{A},\tau) \subset \mathbb B(\mathcal{H})$ has Property \textup{PHP}, then $ (\mathcal{A},\tau)$ is a selfless $\textup{C}^*$-probability space.
\end{thm}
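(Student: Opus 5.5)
The statement is \cite[Theorem 3.3]{HKEPR25}, and I would prove it by reducing selflessness to a freeness statement in an ultrapower. The reduction is as follows. Suppose there is a unitary $w\in \mathcal A^{\mathcal U}$ such that $\tau^{\mathcal U}(waw^*)=\tau(a)$ for all $a\in\mathcal A$, and such that $\mathcal A$ and $w\mathcal A w^*$ are freely independent with respect to $\tau^{\mathcal U}$. Then the assignment $a\star b\mapsto a\,(wbw^*)$ on the algebraic free product preserves the state. Since $\tau$ has faithful GNS representation, so does the free product state, and hence this assignment extends to an embedding
\[
\theta\colon(\mathcal A,\tau)\star(\mathcal A,\tau)\hookrightarrow(\mathcal A^{\mathcal U},\tau^{\mathcal U})
\]
whose restriction to the first factor is the diagonal embedding $\delta$. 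So it suffices to produce, for each finite $F\subset\ker\tau$, each $\epsilon>0$ and each $N\in\N$, a unitary $w\in U(\mathcal A)$ such that every alternating word
\[
x_1(wy_1w^*)x_2(wy_2w^*)\cdots \qquad (x_k,y_k\in F,\ \text{length}\le N)
\]
satisfies $|\tau(\cdots)|<\epsilon$, and such that $|\tau(wyw^*)|<\epsilon$ for $y\in F$. Passing to the ultraproduct over the directed set of triples $(F,\epsilon,N)$ then gives the required $w$: vanishing of all mixed centered moments is exactly free independence.

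\textbf{Building $w$ from PHP.} Apply Property PHP to $F$ (enlarged by its adjoints and short products), with a small $\epsilon'$ and a large $n$, to obtain $u_i,P_i,P_i^+$. Take $w=u_i^{m}$ for a suitable index $i$ and a suitable exponent $m$. Condition (b) is a ping-pong condition: $u_i$ maps $1-P_i$ into $P_i^+$ and keeps $P_i^+$ inside $P_i^+$, so powers $u_i^{\pm m}$ push most of any vector into $P_i$. Condition (c) says that elements of $F$ almost move the ranges of the $P_j$'s off each other. Let $\xi$ be the vector implementing $\tau$. I would track the vector $x_1wy_1w^*x_2\cdots\xi$ through the word and show inductively that, up to errors of order $\epsilon'$ times a constant depending on $N$ and $\max_{x\in F}\|x\|$, it stays in the range of an appropriate projection ($P_i^+$, or the complementary piece $1-P_i$). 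Each letter $x\in F$ moves mass off $P_i$ by (c), and each conjugation by $w$ moves mass back into $P_i^+$ by (b). At the end the vector is nearly orthogonal to $\xi$.

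\textbf{Choice of index and main obstacle.} The estimate only works once $\xi$, and the finitely many vectors $y\xi$ and $x\xi$, have small mass under $P_i$. Since the $P_i$ are pairwise orthogonal by (a), we have $\sum_i\|P_i\eta\|^2\le\|\eta\|^2$ for each such vector $\eta$. Hence, for $n$ large relative to the number of these vectors and to $\epsilon^{-2}$, some index $i$ has all of them nearly orthogonal to the range of $P_i$, and this is the index I would choose. The same averaging argument gives $\tau(wyw^*)\approx0$ for $y\in F$. I expect the main obstacle to be the bookkeeping in the inductive ping-pong estimate: one must verify that the error accumulated over a word of length $N$ stays bounded by $C_N\epsilon'$, uniformly in the choices, so that $\epsilon'$ can be fixed in terms of $\epsilon$ and $N$ before $n$ is chosen. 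This is where Ozawa's original argument for groups would serve as the template.
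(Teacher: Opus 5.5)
The paper does not prove this statement; it quotes it from \cite[Theorem 3.3]{HKEPR25}. Judged on its own, your proposal has a genuine gap in the reduction step. Write $D=\mathrm{C}^*(\mathcal A\cup w\mathcal A w^*)\subset\mathcal A^{\mathcal U}$ and $\Phi(a\star b)=a\,wbw^*$ on the algebraic free product. Matching all mixed moments only tells you that the GNS representation of $\tau^{\mathcal U}|_D$ is equivalent to that of $\tau\star\tau$. Since the latter defines the reduced free product norm, you get $\|\Phi(p)\|_{\mathcal A^{\mathcal U}}\geq\|p\|_{(\mathcal A,\tau)\star(\mathcal A,\tau)}$ for every $*$-polynomial $p$, i.e.\ a surjection $D\twoheadrightarrow(\mathcal A,\tau)\star(\mathcal A,\tau)$. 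That is the wrong direction. For $\theta$ you need $\|\Phi(p)\|\leq\|p\|_{(\mathcal A,\tau)\star(\mathcal A,\tau)}$, equivalently that $\tau^{\mathcal U}|_D$ has faithful GNS representation. Faithfulness of the GNS representation of $\tau$ on $\mathcal A$ says nothing about this, because $\tau^{\mathcal U}$ is far from faithful on the C$^*$-ultrapower. This already happens for $\mathrm{C}^*_r\mathbb F_2$ with its trace, and all the more in the purely infinite case of Theorem~\ref{idkmain}(ii). Your reduction is really the tracial von Neumann algebra argument, where the ultrapower trace is faithful and moments determine norms. In the C$^*$-setting you need strong convergence, not just convergence in $*$-distribution. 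For instance, replace a Haar unitary matrix $U_N$ by $U_NV_N$ with $\mathrm{rank}(V_N-1)\leq 2$. This does not change the asymptotic freeness of $U_NB_NU_N^*$ from $A_N$, but it can align top eigenvectors and push $\|A_N+U_NV_NB_NV_N^*U_N^*\|$ up to $\|A_N\|+\|B_N\|$ for positive $A_N,B_N$.

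So the substance of the theorem is the operator-norm estimate $\lim_{\mathcal U}\|p(a,w_kbw_k^*)\|\leq\|p\|_{(\mathcal A,\tau)\star(\mathcal A,\tau)}$, on top of the moment computation you sketch, which is essentially fine. This is what the Haagerup--Pisier part of the name refers to in \cite{Oz25,HKEPR25}. It is also reflected in the fact that conditions (b) and (c) of Property PHP are operator inequalities and operator-norm bounds on all of $\mathcal H$, not conditions at a single vector. Your vector-tracking scheme cannot give this bound as set up. It only follows $\xi$ and the finitely many vectors $x\xi$, $y\xi$, and the index $i$ is chosen so that exactly these vectors are nearly orthogonal to $P_i\mathcal H$. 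A norm bound, however, must hold uniformly over all unit vectors of $\mathcal H$, including those concentrated in $P_i\mathcal H$, where your bookkeeping breaks down (for example, $u_i^*$ may move vectors of $P_i^+\mathcal H$ anywhere). The missing ingredient is an inequality of Haagerup--Pisier type that uses the ping-pong data to bound $\|p(a,wbw^*)\|$ on all of $\mathcal H$. Without it, the ultrapower step yields only a quotient map onto the reduced free product, not an embedding of it. A minor point as well: if $\mathcal A\subseteq\mathbb B(\mathcal H)$ is not the GNS representation, $\tau$ need not be a vector state there. In that case, extend it to a state $\tilde\tau$ on $\mathbb B(\mathcal H)$ and use $\sum_i\tilde\tau(P_i)\leq 1$ in your averaging step.
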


\begin{thm}\label{selfthm}
    Let $X$ be an extreme $\Gamma$-boundary with $\#X>2$. For every $x\in X_0$ and every centrally extendable character $\chi:\Gamma_x\to\T$, the pair $(\textup{C}^*_{\Ind_{\Gamma_x}^\Gamma\chi}\Gamma, \varphi)$ has Property \textup{PHP}.
\end{thm}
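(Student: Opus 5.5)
The plan is to verify Definition \ref{php} directly in the model (\ref{induction2}), where $\Ind_{\Gamma_x}^\Gamma\chi$ acts on $\ell^2(\Gamma/\Gamma_x)$ by $\pi(g)\delta_{k\Gamma_x}=\chi(c(g,k\Gamma_x))\delta_{gk\Gamma_x}$. Write $\pi=\Ind_{\Gamma_x}^\Gamma\chi$ and $\Lambda=\Gamma_x$.

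\textbf{Reduction to group elements.} Since $\ker\varphi$ is the closure of the span of
\[
\{\pi(g)-\varphi(\pi(g))1 : g\in\Gamma\}
\]
and condition (c) is stable under small perturbations, it suffices to treat finite sets of elements $x=\sum_{g\in F}a_g\pi(g)$ with $F\subset\Gamma\setminus\{e\}$ finite and $\varphi(x)=0$, which means $\sum_{g\in F\cap\Lambda}a_g\chi(g)=-a_e$. Indeed, every element of $\ker\varphi$ is within $\epsilon$ of an element $x'=a_e+\sum_{g\in F}a_g\pi(g)$ with $\varphi(x')=0$.

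\textbf{Choice of $u_i$, $P_i$, $P_i^+$.} Apply Lemma \ref{PHPlem} to $x\in X_0$, $F$, and $n$, obtaining $t_i$ and $B_i^\pm$. Set $u_i=\pi(t_i)$, let $P_i$ be the projection onto $\ell^2(B_i^+\cup B_i^-)$, and let $P_i^+$ be the projection onto $\ell^2(B_i^+)$.

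Since $\pi(g)$ maps $\ell^2(S)$ onto $\ell^2(gS)$ (it permutes basis vectors up to phases), property (i) gives the pairwise orthogonality in (a). Property (ii) gives $t_i(\Gamma/\Gamma_x\setminus B_i^-)\subset B_i^+$, hence $u_i(1-P_i)u_i^*\le P_i^+$. For the inequality $u_iP_i^+u_i^*\le P_i^+$, use $B_i^+\subset \Gamma/\Gamma_x\setminus B_i^-$ (by (i)), so $t_iB_i^+\subset B_i^+$.

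\textbf{Condition (c).} For $g\in F\setminus\Lambda$, property (iii) gives $P_i\pi(g)P_j=0$. For $g\in F\cap\Lambda$, property (iv) shows that $\pi(g)$ fixes each coset $k\Lambda$ in $B_i^\pm$ up to the phase
\[
\chi\bigl(c(g,k\Lambda)\bigr)=\chi(k_r^{-1}gk_r),
\]
where $k_r$ is the chosen representative. The hypothesis $x\in X_0$ is used here: $k_r^{-1}gk_r\in\Lambda$ by (iv).

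Central extendability is the key input at this point. Because $\chi$ is the restriction of a class function $\tilde\chi$ on $\Gamma$, we get $\chi(k_r^{-1}gk_r)=\tilde\chi(g)=\chi(g)$. Hence
\[
P_i\pi(g)P_j=\delta_{ij}\,\chi(g)P_i.
\]
Combining the two cases,
\[
P_ixP_j=\delta_{ij}\Bigl(a_e+\sum_{g\in F\cap\Lambda}a_g\chi(g)\Bigr)P_i=\delta_{ij}\,\varphi(x)P_i=0.
\]
This yields (c), even with $\epsilon=0$ for such $x$.

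\textbf{Main obstacle.} The step I expect to be delicate is the phase computation for $g\in\Gamma_x$: one must check, with the fixed cocycle, that the phase on $\delta_{k\Lambda}$ is exactly $\chi$ of a conjugate of $g$ by a coset representative. This is where a merely relatively central character would fail, and centrally extendable is needed. The remaining bookkeeping is the approximation argument reducing arbitrary elements of $\ker\varphi$ to finitely supported ones; there one has to ensure $\varphi(x')=0$ is preserved after truncation, which is arranged by adjusting the scalar coefficient $a_e$.
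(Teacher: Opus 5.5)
Your proposal is correct and follows essentially the same route as the paper. It uses the same $u_i=\Ind_{\Gamma_x}^\Gamma\chi(t_i)$ and the same coordinate projections onto $\ell^2(B_i^+)$ and $\ell^2(B_i^+\cup B_i^-)$ from Lemma \ref{PHPlem}, the same case split $g\notin\Gamma_x$ versus $g\in\Gamma_x$ in (c), and the same use of central extendability to identify the phase $\chi(c(g,k\Gamma_x))$ with $\chi(g)$. The only differences are cosmetic: you check (c) on general finite combinations $a_e1+\sum a_g\pi(g)\in\ker\varphi$ rather than on the paper's spanning set $\{\pi(h)-\chi(h)1:h\in\Gamma_x\}\cup\{\pi(g):g\notin\Gamma_x\}$, and you spell out the inequality $u_iP_i^+u_i^*\le P_i^+$ (via $t_iB_i^+\subset B_i^+$), which the paper leaves implicit.
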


\begin{proof}
    As mentioned in \cite[Definition 3.1]{HKEPR25}, it is enough to choose the finite sets $F\subset \ker{\varphi}$ from a subset with dense linear span in $\ker(\varphi)$, so we will consider a finite subset
    \begin{equation}\label{sett}
        F\subset\{\Ind_{\Gamma_x}^\Gamma\chi(h)-\chi(h)1:h\in \Gamma_x\}\cup\{\Ind_{\Gamma_x}^\Gamma\chi(g):g\in \Gamma\setminus \Gamma_x\},
    \end{equation}
   and let
    \begin{align*}
   F'=\{g\in \Gamma : \Ind_{\Gamma_x}^\Gamma\chi(g)\in F\text{ or }\Ind_{\Gamma_x}^\Gamma\chi(g)-\chi(g)1\in F\}
   \end{align*}
be the corresponding subset of $\Gamma$. Consider the $\Gamma_x$-equivariant representation {$\rho: \ell^\infty(\Gamma/\Gamma_x)\to \mathbb{B}(\C)$} defined by $\rho(f)z = f(\Gamma_x)z$ ($f\in \ell^\infty(\Gamma/\Gamma_x)$, $z\in \C$). We note that the induced covariant representation $\Ind_{\Gamma_x}^\Gamma(\C, \chi, \rho)$ satisfies the formula $[\Ind_{\Gamma_x}^\Gamma\rho(f)]\xi(t\Gamma_x) = f(t\Gamma_x)\xi(t\Gamma_x)$.

Fix $n\in \N$. For $1\leq i\leq n$, let $t_i\in \Gamma$ and $B_i^+, B_i^-\subset \Gamma/\Gamma_x$ be the group elements and subsets obtained from applying Lemma \ref{PHPlem} to the set $F$. We claim that the unitaries $u_i$ and orthogonal projections $P_i^+, P_i\in \mathbb{B}(\Ind_{\Gamma_x}^\Gamma\C)$ defined by
\begin{align*}
u_i=\Ind_{\Gamma_x}^\Gamma\chi(t_i), \qquad P_i^+=\Ind_{\Gamma_x}^\Gamma\rho(1_{B_i^+}), \quad \textup{and}\quad P_i=\Ind_{\Gamma_x}^\Gamma\rho(1_{B_i^+} + 1_{B_i^-})
\end{align*}
satisfy Definition \ref{php}.
\begin{enumerate}
    \item[(a)] Property PHP(a) follows immediately from Lemma \ref{PHPlem}(\hyperref[phpi]{i}).
    \item[(b)] Property PHP(b) follows from Lemma \ref{PHPlem}(\hyperref[phpii]{ii}) and covariance of the induced representation.
    \item[(c)] Given $g\in F'$, there are two cases to consider in the proof of Property PHP(c): $g\not\in \Gamma_x$ and $g\in\Gamma_x$. If $g\not\in \Gamma_x$, then covariance of the induced representation and Lemma \ref{PHPlem}(\hyperref[phpiii]{iii}) implies
    \begin{align*}
P_i\Ind_{\Gamma_x}^\Gamma\chi(g)P_j = \Ind_{\Gamma_x}^\Gamma\chi(g)\Ind_{\Gamma_x}^\Gamma\rho(1_{g(B_i^+\cup B_i^-)\cap (B_j^+\cup B_j^-)}) = 0.
    \end{align*}
    On the other hand, if $g\in \Gamma_x$, then because $g(B_i^+\cup B_i^-) = B_i^+\cup B_i^-$ (Lemma \ref{PHPlem}(\hyperref[phpiv]{iv})), covariance of the induced representation implies that $P_i\Ind_{\Gamma_x}^\Gamma\chi(g) = \Ind_{\Gamma_x}^\Gamma\chi(g)P_i$. Thus, we need only to show that $P_i(\Ind_{\Gamma_x}^\Gamma\chi(g) - \chi(g)1)P_j = 0$ when $i = j$. If $k\in\Gamma\setminus A_i^+\cup A_i^-$, then automatically $P_i(\Ind_{\Gamma_x}^\Gamma\chi(g) - \chi(g)1)P_i\delta_{k\Gamma_x} = 0$, so we need only to consider $k\in A_i^+\cup A_i^-$. For such $k$, Lemma \ref{PHPlem}(\hyperref[phpiv]{iv}) implies that
    \begin{align*}
P_i(\Ind_{\Gamma_x}^\Gamma\chi(g)-\chi(g)1)P_i\delta_{k\Gamma_x}&=  P_i(\Ind_{\Gamma_x}^\Gamma\chi(g)-\chi(g)1)\delta_{k\Gamma_x}\\
&= P_i(\chi(c(g^{-1}, k\Gamma_x)^{-1})\delta_{gk\Gamma_x} - \chi(g)\delta_{k\Gamma_x})\\
&= \chi(c(g, k\Gamma_x))\delta_{k\Gamma_x} - \chi(g)\delta_{k\Gamma_x},
    \end{align*}
    where the last line follows because $c(g^{-1}, gk\Gamma_x)^{-1} = c(g, k\Gamma_x)$ and $gk\Gamma_x = k\Gamma_x$. Now let $\tilde{\chi}: \Gamma\to \C$ be a central extension of $\chi$. Because $k^{-1}gk\in \Gamma_x$, the cocycle relation implies that $c(k, \Gamma_x)^{-1}c(g, k\Gamma_x)c(k, \Gamma_x) = c(k^{-1}gk, \Gamma_x) = k^{-1}gk$. Thus,
    \begin{align*}
\chi(c(g, k\Gamma_x)) = \tilde{\chi}(c(k, \Gamma_x)^{-1}c(g, k\Gamma_x)c(k, \Gamma_x)) = \tilde{\chi}(k^{-1}gk) = \chi(h),
    \end{align*}
    so $P_i(\Ind_{\Gamma_x}^\Gamma\chi(g)-\chi(g)1)P_i\delta_{k\Gamma_x}= 0$, as desired.
\end{enumerate}
\end{proof}

\begin{cor}\label{selfcor}
    Let $X$ be an extreme $\Gamma$-boundary with $\#X>2$. If $\Gamma_x/\Gamma_x^\circ$ is amenable for every $x\in X$, then the $\textup{C}^*$-algebra $\textup{C}^*_{\lambda_{\Gamma/\Gamma_x}}\Gamma$ is selfless with respect to its canonical state. 
\end{cor}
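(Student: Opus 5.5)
We want to show that $\textup{C}^*_{\lambda_{\Gamma/\Gamma_x}}\Gamma$, with its canonical vector state at $\delta_{\Gamma_x}$, is selfless for every $x\in X$, including $x\notin X_0$. The plan is to reduce to Theorem \ref{selfthm} applied to a generic point, using Lemma \ref{constant} to identify the algebras. The trivial character $\chi\equiv 1$ on any stabilizer is centrally extendable, since the constant function $1$ on $\Gamma$ is conjugation invariant. Hence Theorem \ref{selfthm} gives Property PHP, and therefore selflessness via \cite[Theorem 3.3]{HKEPR25}, for $(\textup{C}^*_{\lambda_{\Gamma/\Gamma_z}}\Gamma,\varphi_z)$ for every $z\in X_0$. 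Here $\varphi_z$ is the vector state at $\delta_{\Gamma_z}$, and $\lambda_{\Gamma/\Gamma_z}=\Ind_{\Gamma_z}^\Gamma 1$.

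Now fix an arbitrary $x\in X$ and pick any $z\in X_0$, which is possible since $X_0$ is dense. By hypothesis $\Gamma_x/\Gamma_x^\circ$ is amenable, so Lemma \ref{constant} gives $\lambda_{\Gamma/\Gamma_x}\prec\lambda_{\Gamma/\Gamma_z}$. Since $\Gamma_z/\Gamma_z^\circ$ is trivial, Lemma \ref{constant} also gives $\lambda_{\Gamma/\Gamma_z}\prec\lambda_{\Gamma/\Gamma_x}$. Thus the two representations are weakly equivalent, and the identity on $\Gamma$ extends to a $*$-isomorphism $\Theta:\textup{C}^*_{\lambda_{\Gamma/\Gamma_z}}\Gamma\to\textup{C}^*_{\lambda_{\Gamma/\Gamma_x}}\Gamma$.

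The remaining issue, which is the main obstacle, is that selflessness refers to the pair (algebra, state). We must show $\varphi_x\circ\Theta=\varphi_z$, i.e.\ that $\mathbf 1_{\Gamma_x}$ and $\mathbf 1_{\Gamma_z}$ give the same state once transported. This fails in general when $\Gamma_x\neq\Gamma_z$, so $\Theta$ by itself does not suffice.

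The approach I would take instead is to rerun the proof of Theorem \ref{selfthm} directly for $(\textup{C}^*_{\lambda_{\Gamma/\Gamma_x}}\Gamma,\varphi_x)$, using the dense spanning set $\{\lambda(h)-1:h\in\Gamma_x\}\cup\{\lambda(g):g\notin\Gamma_x\}$ of $\ker\varphi_x$.
\begin{enumerate}
\item For $g\notin\Gamma_x$, the construction in Lemma \ref{PHPlem} still yields the sets $U$, $U_i^\pm$, $B_i^\pm$ with properties (i)--(iii); none of these steps used $x\in X_0$.
\item For $h\in F\cap\Gamma_x$ the construction breaks down, because $h$ need not act trivially near $x$. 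Here I would use amenability of $\Gamma_x/\Gamma_x^\circ$. For $h\in\Gamma_x^\circ$, (iv) holds after shrinking $U$. For $h\in\Gamma_x\setminus\Gamma_x^\circ$, we have $\lambda(h)-1$, and I would try to replace the exact vanishing in (c) by approximate vanishing. The idea is to use Følner sets for the finite set of images in $\Gamma_x/\Gamma_x^\circ$ to build, inside each block $B_i^\pm$, fibres over the cosets of $\Gamma_x^\circ$, and then compress with a weighted (Følner-averaged) version of $P_i$.
\end{enumerate}
Since PHP uses genuine projections, it is more plausible to instead invoke Robert's alternative: selflessness is stable under existential/ultrapower-compatible embeddings. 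So the step I would actually try first is to show that $(\textup{C}^*_{\lambda_{\Gamma/\Gamma_x}}\Gamma,\varphi_x)$ embeds existentially into $(\textup{C}^*_{\lambda_{\Gamma/\Gamma_z}}\Gamma,\varphi_z)^{\mathcal U}$ for a net $z_i\in X_0$ converging to $x$ with $\Gamma_{z_i}\to K$ in the Chabauty topology. Here $\Gamma_x^\circ\le K\le\Gamma_x$, and the Følner averaging over $\Gamma_x/K$ should realize $\varphi_x$ as a limit of the $\varphi_{z_i}$ composed with approximately inner perturbations. I expect this state-matching step to be the real difficulty.
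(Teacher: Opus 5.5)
\textbf{Comparison with the paper.} Your first two paragraphs are already the paper's entire proof, which is the single sentence ``this follows from Lemma \ref{constant} and Theorem \ref{selfthm}''. Lemma \ref{constant}, applied at $x$ and at a generic $z\in X_0$ (where $\Gamma_z=\Gamma_z^\circ$), gives $\lambda_{\Gamma/\Gamma_x}\sim\lambda_{\Gamma/\Gamma_z}$. Theorem \ref{selfthm} with the trivial character then gives Property PHP, hence selflessness, of $(\textup{C}^*_{\lambda_{\Gamma/\Gamma_z}}\Gamma,\varphi_z)$.

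Your objection that $\varphi_x\circ\Theta=\mathbf 1_{\Gamma_x}\neq\mathbf 1_{\Gamma_z}=\varphi_z$ is correct, and the paper's one-line proof does not address it. What the argument actually establishes is that $\textup{C}^*_{\lambda_{\Gamma/\Gamma_x}}\Gamma$ is selfless with respect to the transported state $\varphi_z\circ\Theta^{-1}$. This is the extension of the positive-definite function $\mathbf 1_{\Gamma_z}$, and its GNS representation is faithful because $\Theta$ is an isomorphism. That is the reading under which the corollary is proved. It is also all that Corollary \ref{TScor} needs: Robert's theorem turns selflessness with respect to some such state, plus tracelessness, into pure infiniteness. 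Under that reading you already had a complete proof and should have concluded there.

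\textbf{The gap for the stronger reading.} If you insist on the vector state $\varphi_x$ at $\delta_{\Gamma_x}$ with $x\notin X_0$, your proposal has a genuine gap. Neither route is carried out, and the first cannot work as described.

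Take Thompson's group $T\acts\R/\Z$, $x=0$, $\lambda=\lambda_{T/F}$. Let $h\in T_0=F$ have $\textup{Fix}(h)=\{0\}$, for instance the standard generator of $F$ whose only fixed points in $[0,1]$ are $0$ and $1$. Put $\lambda(h)-1\in\ker\varphi_x$ into the finite set.
\begin{itemize}
\item PHP(c) with $i=j$ forces $\|(\lambda(h)-1)P_i\|\leq\sqrt{2\epsilon}$.
\item PHP(b) with $u_i=\lambda(t_i)$, as in the proof of Theorem \ref{selfthm}, forces $\|(\lambda(t_i^{-1}ht_i)-1)(1-P_i)\|\leq\sqrt{2\epsilon}$.
\item For any point $d\neq 0,\,t_i^{-1}0$ of the orbit $T\cdot 0\cong T/F$, both $h$ and $t_i^{-1}ht_i$ have infinite orbits through $d$. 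Averaging over $M$ powers gives $\|P_i\delta_d\|,\ \|(1-P_i)\delta_d\|\leq M\sqrt{2\epsilon}+M^{-1/2}$, which contradicts $\|\delta_d\|=1$ once $\epsilon$ is small.
\end{itemize}
So no choice of projections, F\o lner-weighted or otherwise, rescues the argument of Theorem \ref{selfthm} as long as the unitaries are group elements. For multiplication projections $\Ind_{\Gamma_x}^\Gamma\rho(\mathbf 1_B)$ the failure is even more immediate: PHP(c) already forces $B\subseteq\{F\}$.

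The ultrapower route is only a heuristic. You would need an embedding of $(\textup{C}^*_{\lambda_{\Gamma/\Gamma_x}}\Gamma,\varphi_x)$ into $(\textup{C}^*_{\lambda_{\Gamma/\Gamma_z}}\Gamma,\varphi_z)^{\mathcal U}$ that is existential, meaning it approximately realizes norms of all $*$-polynomials with coefficients in the image; one that merely matches states is not enough. You would also need a result that selflessness passes down along such embeddings. Neither is supplied.
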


\begin{proof}
    This follows from Lemma \ref{constant} and Theorem \ref{selfthm}.
\end{proof}

\section{An application: C*-algebras associated with Thompson-Stein groups}\label{application}

We now apply our results to study the operator algebraic aspects of Thompson-like groups. The class of groups we study was introduced by Stein in \cite{St92}; our exposition follows \cite{BiSt,Ta24}.

\begin{defn}[Thompson-Stein groups]
Let $\Lambda \subset \mathbb{R}$ be a non-trivial multiplicative subgroup of $\R^+$, and $\Gamma\subset \mathbb{R}$ a $\mathbb{Z}\Lambda$-submodule such that $\Lambda \cdot \Gamma=\Gamma$. Fix $1 \leq \ell \in \Gamma$. 
\begin{itemize}
    \item We denote by $V(\Gamma,\Lambda, \ell)$ the right continuous, piecewise linear bijections of $[0,\ell]$ with slopes in $\Lambda$ and finitely many discontinuities in $\Gamma$. 
    \item We denote by $T(\Gamma,\Lambda,\ell) \subset V(\Gamma,\Lambda,\ell)$ the group of continuous piecewise linear bijections of $[0,\ell]/{\{0\sim\ell\}}$, with non-differentiable points in $\Gamma$ and slopes in $\Lambda$.
    \item We denote by $F(\Gamma,\Lambda, \ell) \subset T(\Gamma,\Lambda,\ell) \subset V(\Gamma,\Lambda,\ell)$ the space of piecewise linear homeomorphisms of $[0,\ell]$ with non-differentiable points in $\Gamma$ and slopes in $\Lambda$.
\end{itemize}\label{steins group}
\end{defn}
We note that Thompson's group $V$ is recovered as $V=V( \mathbb{Z}[1/2], \langle 2 \rangle,1 )$, and the Higman-Thompson groups correspond to $V_{n,r}=V(\mathbb{Z}[1/n], \langle n \rangle , r)$, for $n,r \in \mathbb{N}$ (see \cite{Hi74}). Many generalizations of the Higman-Thompson groups fit into this framework, including Cleary's irrational slope Thompson groups \cite{Cl95,Cl20}. We refer the reader to \cite{St92,Ta24} for several other examples.

The group $T(\Gamma,\Lambda,\ell)$ naturally acts on $\mathbb T\cong \R/\ell\Z$ by homeomorphisms. The stabilizer of $0$ with respect to this action is canonically isomorphic to $F(\Gamma,\Lambda,\ell)$. In the next result, we denote by $\pi_\ell:\R\to \R/\ell\Z$ the canonical quotient map.

\begin{prop}\label{TSstuff}
    Let $T(\Gamma,\Lambda,\ell)\curvearrowright \R/\ell\Z$ be the canonical action of the Thompson-Stein group $T(\Gamma,\Lambda,\ell)$.
    \begin{itemize}
        \item[\textup{(\textit{i})}] $\R/\ell\Z$ is an extreme $T(\Gamma,\Lambda,\ell)$-boundary.
        \item[\textup{(\textit{ii})}] The group $ T(\Gamma,\Lambda,\ell)_x/T(\Gamma,\Lambda,\ell)_x^\circ$ is abelian for every $x\in\R/\ell\Z$.
    \end{itemize}
\end{prop}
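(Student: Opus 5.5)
For part (i), I will verify extreme proximality directly from the piecewise-linear structure and use the fact that $\Gamma$ is dense in $\R$. Since $\Lambda$ is a nontrivial subgroup of $\R^+$ and $\Lambda\Gamma=\Gamma$ with $\Gamma\ni 1$ nonzero, $\Gamma$ contains $\lambda\gamma-\gamma=(\lambda-1)\gamma$ for $\lambda\neq 1$. Hence $\Gamma$ is a nontrivial $\Z$-module. It is closed under multiplication by $\lambda^{\pm k}$, so it contains arbitrarily small positive elements and is therefore dense.

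Given nonempty open $U,V\subset\R/\ell\Z$, I will shrink them to arcs $U\supset[a,b]$ and $V\supset[c,d]$ with endpoints in $\pi_\ell(\Gamma)$. The goal is an element $g\in T(\Gamma,\Lambda,\ell)$ mapping the complementary arc $[b,a+\ell]$ into $[c,d]$. This amounts to building a PL homeomorphism of the circle that maps an arc with $\Gamma$-endpoints onto another arc with $\Gamma$-endpoints, with breakpoints in $\Gamma$ and slopes in $\Lambda$.

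The standard tool is the lemma (Bieri--Strebel) that for $\gamma_1<\gamma_2$ and $\gamma_1'<\gamma_2'$ in $\Gamma$, there is a PL homeomorphism $[\gamma_1,\gamma_2]\to[\gamma_1',\gamma_2']$ with breakpoints in $\Gamma$ and slopes in $\Lambda$ if and only if $\gamma_2-\gamma_1\equiv\gamma_2'-\gamma_1'$ modulo $I\Lambda\cdot\Gamma$, where $I\Lambda\cdot\Gamma$ is the submodule generated by $(\lambda-1)\gamma$. Since $I\Lambda\cdot\Gamma$ is itself dense (by the argument above), I can adjust the target endpoints slightly inside $V$ to satisfy the congruence for the big arc $[b,a+\ell]\to[c',d']\subset[c,d]$. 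The same adjustment makes the complementary arc $[a,b]\to[d',c'+\ell]$ consistent, because the total lengths are both $\ell$, so the second congruence follows from the first. Gluing these two maps gives $g$.

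This congruence bookkeeping is the main obstacle. I would cite \cite{BiSt} for the interval-matching lemma rather than reprove it. Minimality then follows from extreme proximality. Since $\R/\ell\Z$ has more than two points, it is a boundary by \cite[Theorem 2.3]{Gl74}.

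For part (ii), I will use the germ homomorphism. For $x\in\R/\ell\Z$ and $g\in T_x$, near $x$ the map $g$ is linear on each side, with left slope $\lambda_-(g)\in\Lambda$ and right slope $\lambda_+(g)\in\Lambda$. The chain rule shows that $g\mapsto(\lambda_-(g),\lambda_+(g))$ is a homomorphism $T_x\to\Lambda\times\Lambda$. Its kernel consists of the elements with both one-sided slopes equal to $1$ at the fixed point $x$. Such an element is the identity on a one-sided neighborhood on each side, because it is linear there with slope $1$ and fixes $x$. So the kernel is exactly $T_x^\circ$, and $T_x/T_x^\circ$ embeds in the abelian group $\Lambda\times\Lambda$. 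This part is routine.
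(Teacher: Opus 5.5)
Your proposal is correct. Part (ii) is essentially the paper's argument: the one-sided slopes at $x$ give a homomorphism $T_x\to\Lambda\times\Lambda$ whose kernel is $T_x^\circ$.

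For part (i) you take a genuinely different route. You reduce to the Bieri--Strebel interval-matching criterion: a PL map $[\gamma_1,\gamma_2]\to[\gamma_1',\gamma_2']$ with breakpoints in $\Gamma$ and slopes in $\Lambda$ exists iff the lengths agree modulo $I\Lambda\cdot\Gamma$. You then choose the target length in the dense coset $L+I\Lambda\cdot\Gamma$, and note that the complementary arcs automatically satisfy the congruence because both circles have total length $\ell$.

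The paper never invokes the criterion. It writes down an explicit three-piece map. Let $L=a+\ell-b$ be the length of the big arc, and choose $p\in\Lambda$ with $pL<b-a$.
\begin{itemize}
\item The big arc $[b,a+\ell]$ is compressed with slope $p$ onto $[x,x+pL]\subset U$.
\item The subarc $[a+\ell,a+\ell+pL]$ of the small arc is stretched with slope $p^{-1}$ onto an arc of length $L$.
\item The remaining arc $[a+\ell+pL,b+\ell]$ is mapped with slope $1$. It lands exactly on $[x+pL+L,x+\ell]$ because $L+(b-a)=\ell$.
\end{itemize}
The net length change is $(p-1)L+(p^{-1}-1)pL=0$, so the pieces close up automatically. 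All breakpoints visibly lie in $\Gamma$, since $\Lambda\Gamma=\Gamma$.

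The paper's construction is therefore self-contained. It needs no congruence bookkeeping, no density of $I\Lambda\cdot\Gamma$, and no citation of the nontrivial sufficiency direction of the Bieri--Strebel theorem. Your route gives more: it tells you exactly which arcs can be mapped onto which. That extra information is not needed for extreme proximality.

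One small slip: the hypothesis is $1\le\ell\in\Gamma$, not $1\in\Gamma$. Your density argument for $\Gamma$ and for $I\Lambda\cdot\Gamma$ works verbatim with the nonzero element $\ell$ in place of $1$.
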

\begin{proof}
    The proof of (\emph{i}) is a direct modification of the Bieri-Strebel criterion \cite[ Theorem A.1]{St92}. Indeed, let $K\subset \mathbb T$ be a compact proper subset, and let $U\subset \mathbb T$ be open and non-empty. Since $\Lambda$ is non-trivial, $\Gamma\subset \R$ is dense. Thus, there exist $a,b\in\Gamma$ such that $0<a<b<\ell$ and $\pi_\ell([a,b])\subset \T\setminus K$; in particular, $K\subset \pi_\ell([b,a+\ell])$. For convenience, we set $L=a+\ell-b\in\Gamma$. Because $U$ is open, we can also find $p\in\Lambda$ and $x\in\Gamma$ such that $pL<b-a$ and $\pi_\ell([x,x+pL])\subset U$.

      Now, we define $G:[b,b+\ell]\to [x,x+\ell]$ by
      $$
      G(t)=\begin{cases}
x+p(t-b),
    &b\leq t\leq a+\ell,\\[2mm]
x+pL+p^{-1}(t-a-\ell),
    &a+\ell\leq t\leq a+\ell+pL,\\[2mm]
x+pL+L+(t-a-\ell-pL),
    &a+\ell+pL\leq t\leq b+\ell.
\end{cases}
      $$
      An $\ell$-periodic extension of $G$ to all of $\R$ satisfies the required properties: $G$ is an increasing piecewise linear homeomorphism with breakpoints in $\Gamma$ and slopes $1,p,p^{-1}\in\Lambda$. Furthermore, it maps 
      $$
      G(K)\subset G(\pi_\ell([b,a+\ell]))\subset \pi_\ell([x,x+pL])\subset U.
      $$
      
    We now prove (\emph{ii}). Note that the map $T(\Gamma,\Lambda,\ell)_x\to \Lambda\times \Lambda$ that sends a piecewise linear homeomorphism to its left/right derivatives at $x$, $G\mapsto (G'_{-}(x),G'_{+}(x))$, is a group homomorphism due to the chain rule. If $G'_{-}(x)=G'_{+}(x)=1$, then by the Mean Value Theorem \cite[Theorem 29.5]{Ross13}, $G$ is locally equal to $G(t)=t+c$ for some $c\in \R$; but $G$ fixes $x$, so it is locally equal to the identity map, and thus $G\in T(\Gamma,\Lambda,\ell)_x^\circ$. We conclude that $ T(\Gamma,\Lambda,\ell)_x/T(\Gamma,\Lambda,\ell)_x^\circ$ faithfully embeds into the abelian group $\Lambda\times \Lambda$. \end{proof}

Noting once again that $T(\Gamma,\Lambda,\ell)_0\cong F(\Gamma,\Lambda,\ell)$, Proposition \ref{TSstuff} allows us to apply the results obtained in Theorem \ref{selfcor} to the extreme boundary action $T(\Gamma,\Lambda,\ell)\curvearrowright \R/\ell\Z$.

\begin{cor}\label{TScor}
    For $\Gamma,\Lambda,\ell$ as above, set $F=F(\Gamma,\Lambda,\ell)$ and $T=T(\Gamma,\Lambda,\ell)$. Then, the $\textup{C}^*$-algebra $\textup{C}^*_{\lambda_{T/F}}T$ is simple and purely infinite.
\end{cor}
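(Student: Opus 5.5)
The plan is to reduce Corollary \ref{TScor} to the generic-point case of Theorem \ref{selfthm} by weak equivalence, and then to obtain pure infiniteness from Robert's theorem \cite[Theorem 3.1]{Ro25}. Write $X=\R/\ell\Z$. By Proposition \ref{TSstuff}(i), $X$ is an extreme $T$-boundary, and clearly $\#X>2$. Under the canonical identification, $F=T_0$.

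\emph{Step 1 (weak equivalence).} By Proposition \ref{TSstuff}(ii), every quotient $T_y/T_y^\circ$ is abelian, hence amenable. Applying Lemma \ref{constant} with the roles of the points interchanged gives $\lambda_{T/T_y}\prec\lambda_{T/T_z}$ for all $y,z\in X$, so all the quasi-regular representations $\lambda_{T/T_y}$ are weakly equivalent. Fix a generic point $z\in X_0$, which exists since $X_0$ is dense $G_\delta$. Then
\[
\textup{C}^*_{\lambda_{T/F}}T\cong \textup{C}^*_{\lambda_{T/T_z}}T
\]
canonically, via $\lambda_{T/F}(g)\mapsto\lambda_{T/T_z}(g)$. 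Since simplicity and pure infiniteness are isomorphism invariants, it suffices to treat $\textup{C}^*_{\lambda_{T/T_z}}T=\textup{C}^*_{\Ind_{T_z}^T 1}T$.

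\emph{Step 2 (PHP, selflessness, simplicity).} The trivial character $1$ on $T_z$ is centrally extendable, being the restriction of the constant function $1$ on $T$. Theorem \ref{selfthm} therefore gives Property PHP for $(\textup{C}^*_{\lambda_{T/T_z}}T,\varphi)$, and \cite[Theorem 3.3]{HKEPR25} then gives selflessness. Simplicity follows from Proposition \ref{simple}, applied with $\Lambda=T_z=T_z^\circ$ (since $z\in X_0$) and $\chi=1$. Note that $\varphi$ is the vector state of the cyclic vector $\delta_{T_z}$, so its GNS representation is faithful, as required.

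\emph{Step 3 (tracelessness).} By Theorem \ref{traces}, a trace exists if and only if $T/N\curvearrowright X$ is topologically free, where $N=\ker(T\curvearrowright X)$. The action is faithful, so $N$ is trivial. It fails to be topologically free: choose a piecewise linear homeomorphism with breakpoints in $\Gamma$ and slopes in $\Lambda$ that is the identity on an arc and nontrivial elsewhere (for instance a nontrivial element of $F$ supported in a small interval). Hence $\textup{C}^*_{\lambda_{T/T_z}}T$ is traceless. Robert's theorem states that a selfless traceless $\textup{C}^*$-algebra is purely infinite, which finishes the argument.

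\emph{Main obstacle.} The delicate point is Step 1: it must yield an isomorphism of the group $\textup{C}^*$-algebras, not just a one-sided containment. This is why the two-sided application of Lemma \ref{constant} matters; it uses that every point, in particular $0$, has an amenable quotient $T_y/T_y^\circ$. A second point to keep in view is that Robert's theorem is applied to the algebra itself, so selflessness with respect to the specific state $\varphi$ is enough.
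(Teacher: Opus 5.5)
Your proposal is correct and follows essentially the paper's route: Proposition \ref{TSstuff} and Lemma \ref{constant} identify $\textup{C}^*_{\lambda_{T/F}}T$ with $\textup{C}^*_{\lambda_{T/T_z}}T$ for a generic point $z\in X_0$ (the content of Corollary \ref{selfcor}). Theorem \ref{selfthm} with the trivial character then gives Property PHP and hence selflessness, and the failure of topological freeness plus Robert's theorem yields simplicity and pure infiniteness, as in part (ii) of Theorem \ref{idkmain}. You are right that Robert's theorem only needs selflessness for the generic-point state transported through this isomorphism, not for the canonical state $g\mapsto\mathbf{1}_F(g)$ of $\lambda_{T/F}$, and that transported state is all Corollary \ref{selfcor} actually provides.
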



\section*{AI statement}

No generative AI or large language model was used in the production of this text or its mathematical content.

\section*{Acknowledgments}

F.F.\ gratefully acknowledges support from the Simons Foundation Dissertation Fellowship SFI-MPS-SDF-00015100. The authors wish to thank Professor Ben Hayes and Professor Mehrdad Kalantar for their guidance and interesting discussions surrounding the present topic. The authors also wish to thank the 2026 UK Operator Algebras conference (UKOA) and Sabhal M\`or Ostaig for hosting the initial part of this research project.

\printbibliography

@article{KK14,
author = {M. Kalantar and M. Kennedy},
TITLE = {Boundaries of reduced {$C^*$}-algebras of discrete groups},
   JOURNAL = {J. Reine Angew. Math.},
  FJOURNAL = {Journal f\"ur die Reine und Angewandte Mathematik. [Crelle's
              Journal]},
    VOLUME = {727},
      YEAR = {2017},
     PAGES = {247--267},
}

@misc{Oz25,
      title={Proximality and selflessness for group {\Cs}-algebras}, 
      author={N. Ozawa},
      year={2025},
      eprint={2508.07938},
      archivePrefix={arXiv},
      primaryClass={math.OA},
    %note={preprint, \url{https://arxiv.org/abs/2508.07938}},
}

@article {AGKEP25,
    AUTHOR = {Amrutam, T. and Gao, D. and Kunnawalkam Elayavalli,
              S. and Patchell, G.},
     TITLE = {Strict comparison in reduced group {\Cs}-algebras},
   JOURNAL = {Invent. Math.},
  FJOURNAL = {Inventiones Mathematicae},
    VOLUME = {242},
      YEAR = {2025},
    NUMBER = {3},
     PAGES = {639--657},
}

@article {BeKa20,
    AUTHOR = {Bekka, B. and Kalantar, M.},
     TITLE = {Quasi-regular representations of discrete groups and
              associated {$C^*$}-algebras},
   JOURNAL = {Trans. Amer. Math. Soc.},
  FJOURNAL = {Transactions of the American Mathematical Society},
    VOLUME = {373},
      YEAR = {2020},
    NUMBER = {3},
     PAGES = {2105--2133},
}

@article {KS2022,
    AUTHOR = {Kalantar, M. and Scarparo, E.},
     TITLE = {Boundary maps, germs and quasi-regular representations},
   JOURNAL = {Adv. Math.},
  FJOURNAL = {Advances in Mathematics},
    VOLUME = {394},
      YEAR = {2022},
     PAGES = {Paper No. 108130, 31},
}

@book{Ross13,
author = {Ross, K. A.},
address = {New York},
booktitle = {Elementary analysis : the theory of calculus},
publisher = {Springer-Verlag},
series = {Undergraduate texts in mathematics},
title = {Elementary analysis: the theory of calculus },
year = {2013},
}

@article {BKKO17,
    AUTHOR = {Breuillard, E. and Kalantar, M. and Kennedy,
              M. and Ozawa, N.},
     TITLE = {{$C^*$}-simplicity and the unique trace property for discrete
              groups},
   JOURNAL = {Publ. Math. Inst. Hautes \'Etudes Sci.},
  FJOURNAL = {Publications Math\'ematiques. Institut de Hautes \'Etudes
              Scientifiques},
    VOLUME = {126},
      YEAR = {2017},
     PAGES = {35--71},
}

@misc{RTV25,
      title={Strict comparison for twisted group {\Cs}-algebras}, 
      author={S. Raum and H. Thiel and E. Vilalta},
      year={2025},
      eprint={2505.18569},
      archivePrefix={arXiv},
      primaryClass={math.OA},
    %note={preprint, \url{https://arxiv.org/abs/2505.18569}},
}

@article {Ro25,
    AUTHOR = {Robert, L.},
     TITLE = {Selfless {\Cs}-algebras},
   JOURNAL = {Adv. Math.},
  FJOURNAL = {Advances in Mathematics},
    VOLUME = {478},
      YEAR = {2025},
     PAGES = {Paper No. 110409, 28},
}

@book {Hi74,
    AUTHOR = {Higman, G.},
     TITLE = {Finitely presented infinite simple groups},
    SERIES = {Notes on Pure Mathematics},
    VOLUME = {No. 8},
 PUBLISHER = {Australian National University, Department of Pure
              Mathematics, Department of Mathematics, I.A.S., Canberra},
      YEAR = {1974},
}

@article {St92,
    AUTHOR = {Stein, M.},
     TITLE = {Groups of piecewise linear homeomorphisms},
   JOURNAL = {Trans. Amer. Math. Soc.},
  FJOURNAL = {Transactions of the American Mathematical Society},
    VOLUME = {332},
      YEAR = {1992},
    NUMBER = {2},
     PAGES = {477--514},
}

@misc{Ta24,
      title={Studying Stein's Groups as Topological Full Groups}, 
      author={O. Tanner},
      year={2024},
      eprint={2312.07375},
      archivePrefix={arXiv},
      primaryClass={math.GR},
}

@book {BiSt,
    AUTHOR = {Bieri, R. and Strebel, R.},
     TITLE = {On groups of {PL}-homeomorphisms of the real line},
    SERIES = {Mathematical Surveys and Monographs},
    VOLUME = {215},
 PUBLISHER = {American Mathematical Society, Providence, RI},
      YEAR = {2016},
     PAGES = {xvii+174},
}

@book{Paulsen2003, place={Cambridge}, series={Cambridge Studies in Advanced Mathematics}, title={Completely Bounded Maps and Operator Algebras}, publisher={Cambridge University Press}, author={Paulsen, V.}, year={2003}, collection={Cambridge Studies in Advanced Mathematics}}

@article {Gl74,
    AUTHOR = {Glasner, S.},
     TITLE = {Topological dynamics and group theory},
   JOURNAL = {Trans. Amer. Math. Soc.},
  FJOURNAL = {Transactions of the American Mathematical Society},
    VOLUME = {187},
      YEAR = {1974},
     PAGES = {327--334},
}

@article {LBMB18,
    AUTHOR = {Le Boudec, A. and Matte Bon, N.},
     TITLE = {Subgroup dynamics and {$C^*$}-simplicity of groups of
              homeomorphisms},
   JOURNAL = {Ann. Sci. \'Ec. Norm. Sup\'er. (4)},
  FJOURNAL = {Annales Scientifiques de l'\'Ecole Normale Sup\'erieure.
              Quatri\`eme S\'erie},
    VOLUME = {51},
      YEAR = {2018},
    NUMBER = {3},
     PAGES = {557--602},
}

@misc{BaFl26,
      title={Selfless inclusions arising from commensurator groups of hyperbolic groups}, 
      author={A. Basu and F. Flores},
      year={2026},
      eprint={2605.12737},
      archivePrefix={arXiv},
      primaryClass={math.GR},
}

@article {Ke20,
    AUTHOR = {Kennedy, M.},
     TITLE = {An intrinsic characterization of {$C^*$}-simplicity},
   JOURNAL = {Ann. Sci. \'Ec. Norm. Sup\'er. (4)},
  FJOURNAL = {Annales Scientifiques de l'\'Ecole Normale Sup\'erieure.
              Quatri\`eme S\'erie},
    VOLUME = {53},
      YEAR = {2020},
    NUMBER = {5},
     PAGES = {1105--1119},
      ISSN = {0012-9593,1873-2151},
}

@article {KiPh00,
    AUTHOR = {Kirchberg, E. and Phillips, N. C.},
     TITLE = {Embedding of exact {$C^*$}-algebras in the {C}untz algebra
              {$\mathscr O_2$}},
   JOURNAL = {J. Reine Angew. Math.},
  FJOURNAL = {Journal f\"ur die Reine und Angewandte Mathematik. [Crelle's
              Journal]},
    VOLUME = {525},
      YEAR = {2000},
     PAGES = {17--53},
}

@misc{GKEPL26,
      title={Selfless reduced amalgamated free products and HNN extensions}, 
      author={D. Gao and S. Kunnawalkam Elayavalli and G. Patchell and L. Teryoshin},
      year={2026},
      eprint={2604.06982},
      archivePrefix={arXiv},
      primaryClass={math.OA},
}

@inproceedings {Ki95,
    AUTHOR = {Kirchberg, E.},
     TITLE = {Exact {${\rm C}^*$}-algebras, tensor products, and the
              classification of purely infinite algebras},
 BOOKTITLE = {Proceedings of the {I}nternational {C}ongress of
              {M}athematicians, {V}ol.\ 1, 2 ({Z}\"urich, 1994)},
     PAGES = {943--954},
 PUBLISHER = {Birkh\"auser, Basel},
      YEAR = {1995},
}

@article {Ph00,
    AUTHOR = {Phillips, N. C.},
     TITLE = {A classification theorem for nuclear purely infinite simple
              {$C^*$}-algebras},
   JOURNAL = {Doc. Math.},
  FJOURNAL = {Documenta Mathematica},
    VOLUME = {5},
      YEAR = {2000},
     PAGES = {49--114},
}

@misc{Ka17,
      title={Uniformly recurrent subgroups and the ideal structure of reduced crossed products}, 
      author={T. Kawabe},
      year={2017},
      eprint={1701.03413},
      archivePrefix={arXiv},
      primaryClass={math.OA},
}

@article {Cl20,
    AUTHOR = {Cleary, S.},
     TITLE = {Regular subdivision in {$\mathbb Z[\frac{1+\sqrt 5}{2}]$}},
   JOURNAL = {Illinois J. Math.},
  FJOURNAL = {Illinois Journal of Mathematics},
    VOLUME = {44},
      YEAR = {2000},
    NUMBER = {3},
     PAGES = {453--464},
}

@article {Cl95,
    AUTHOR = {Cleary, S.},
     TITLE = {Groups of piecewise-linear homeomorphisms with irrational
              slopes},
   JOURNAL = {Rocky Mountain J. Math.},
  FJOURNAL = {The Rocky Mountain Journal of Mathematics},
    VOLUME = {25},
      YEAR = {1995},
    NUMBER = {3},
     PAGES = {935--955},
}

@article {Vi26,
    AUTHOR = {Vigdorovich, I.},
     TITLE = {Selfless reduced {C{$\sp *$}}-algebras of linear groups},
   JOURNAL = {Proc. Lond. Math. Soc. (3)},
  FJOURNAL = {Proceedings of the London Mathematical Society. Third Series},
    VOLUME = {133},
      YEAR = {2026},
    NUMBER = {1},
     PAGES = {Paper No. e70180},
}

@misc{HKER25,
      title={Selfless reduced free product {\Cs}-algebras}, 
      author={B. Hayes and S. Kunnawalkam Elayavalli and L. Robert},
      year={2025},
      eprint={2505.13265},
      archivePrefix={arXiv},
      primaryClass={math.OA},
}

@misc{HKEPR25,
      title={Selfless inclusions of {\Cs}-algebras}, 
      author={B. Hayes and S. Kunnawalkam Elayavalli and G. Patchell and L. Robert},
      year={2025},
      eprint={2510.13398},
      archivePrefix={arXiv},
      primaryClass={math.OA},
}

@misc{FKOCP26,
      title={Pureness and stable rank one for reduced twisted group $\mathrm{C}^\ast$-algebras of certain group extensions}, 
      author={F. Flores and M. Klisse and M. Ó Cobhthaigh and M. Pagliero},
      year={2026},
      eprint={2601.19758},
      archivePrefix={arXiv},
      primaryClass={math.OA},
}

@article{FKOCP25,
    AUTHOR = {Flores, F. and Klisse, M. and \'O{} Cobhthaigh,
              M. and Pagliero, M.},
     TITLE = {Selfless reduced free products and graph products of {$\rm
              C^*$}-algebras},
   JOURNAL = {Adv. Math.},
  FJOURNAL = {Advances in Mathematics},
    VOLUME = {502},
      YEAR = {2026},
     PAGES = {Paper No. 111190},
}

@misc{GJKEPR26,
      title={Selfless C*-correspondences, operator valued C*-probability spaces and completely positive maps}, 
      author={D. Gao and M. Junge and S. Kunnawalkam Elayavalli and G. Patchell and L. Robert},
      year={2026},
      eprint={2607.20361},
      archivePrefix={arXiv},
      primaryClass={math.OA},
}

\bigskip
\bigskip
ADDRESS

\smallskip
\smallskip
Felipe Flores

Department of Mathematics, University of Virginia,

114 Kerchof Hall. 141 Cabell Dr,

Charlottesville, Virginia, United States

E-mail: hmy3tf@virginia.edu

\smallskip
\smallskip
Joseph Gondek

Mathematical Institute, University of Oxford

United Kingdom

E-mail: scro5525@ox.ac.uk

\end{document}